\newtheorem{theorem}{Theorem}[section]
\newtheorem{corollary}[theorem]{Corollary}
\newtheorem{lemma}[theorem]{Lemma}
\newtheorem{example}[theorem]{Example}
\newtheorem{definition}[theorem]{Definition}
\newtheorem{proposition}[theorem]{Proposition}
\newtheorem{remark}[theorem]{Remark}
\numberwithin{theorem}{section}
\begin{document}
\title[Group actions and their pro-$C^{\ast }$-crossed products]{Group
actions on pro-$C^{\ast }$-algebras and their pro-$C^{\ast }$-crossed
products}
\author{Maria Joi\c{t}a}
\address{Department of Mathematics, Faculty of Mathematics and Computer
Science, University of Bucharest, Str. Academiei nr.14, Bucharest, Romania}
\email{ mjoita@fmi.unibuc.ro}
\address{Simion Stoilow Institute of Mathematics of the Roumanian Academy,
21 Calea Grivitei Street, 010702 Bucharest, Romania}
\urladdr{http://sites.google.com/a/g.unibuc.ro/maria-joita/}
\thanks{Supported by a grant of the Romanian National Authority for
Scientific Research, CNCS - UEFISCDI, project number PN-II-ID-PCE-2012-4-0201%
}
\subjclass[2010]{Primary 46L05; Secondary 46H99 }
\keywords{pro-$C^{\ast }$-algebras; crossed products; covariant
representations }

\begin{abstract}
In this paper, we define the notions of full pro-$C^{\ast }$-crossed
product, respectively reduced pro-$C^{\ast }$-crossed product, of a pro-$%
C^{\ast }$-algebra $A\left[ \tau _{\Gamma }\right] $ by a strong bounded
action $\alpha $ of a locally compact group $G$ and investigate some of
their properties.
\end{abstract}

\maketitle

\section{Introduction}

Given a $C^{\ast }$-algebra $A$ and a continuous action $\alpha $ of a
locally compact group $G$ on $A$, we can construct a new $C^{\ast }$%
-algebra, called the crossed product of $A$ by $\alpha $, usually denoted by 
$G\times _{\alpha }A$, and which contains, in some subtle sense, $A$ and $G$%
. The origin of this construction goes back to Murray and von Neumann and
their group measure space construction by which they associated a von
Neumann algebra to a countable group acting on a measure space. The analog
of this construction for the case of $C^{\ast }$-algebras is due to Gelfand
with co-authors Naimark and Fomin. There is a vast literature on crossed
products of $C^{\ast }$-algebras (see, for example, \cite{W}), but the
corresponding theory in the context of non-normed topological $\ast $%
-algebras has still a long way to go.

Crossed product of pro-$C^{\ast }$-algebras by inverse limit actions of
locally compact groups were considered by Phillips \cite{P2} and Joi\c{t}a 
\cite{J2, J3, J4}. If $\alpha $ is an inverse limit action of a locally
compact group $G$ on a pro-$C^{\ast }$-algebra $A\left[ \tau _{\Gamma }%
\right] $ whose topology is given by the family of $C^{\ast }$-seminorms $%
\Gamma =\{p_{\lambda }\}_{\lambda \in \Lambda }$, then the covariance
algebra $L^{1}(G,\alpha ,A\left[ \tau _{\Gamma }\right] )$ has a structure
of locally $m$-convex $\ast $-algebra with topology given by the family of
submultiplicative seminorms $\{N_{p_{\lambda }}\}_{\lambda \in \Lambda }$,
where%
\begin{equation*}
N_{p_{\lambda }}\left( f\right) =\tint\limits_{G}p_{\lambda }\left( f\left(
g\right) \right) dg
\end{equation*}%
and the full crossed product of $A\left[ \tau _{\Gamma }\right] $ by $\alpha 
$ is defined as the enveloping pro-$C^{\ast }$-algebra of $L^{1}(G,\alpha ,A%
\left[ \tau _{\Gamma }\right] )$. In particular, for a given inverse limit
automorphism $\alpha $ of a pro-$C^{\ast }$-algebra $A\left[ \tau _{\Gamma }%
\right] $, we can associate to the pair $(A\left[ \tau _{\Gamma }\right]
,\alpha )$ a pro-$C^{\ast }$-algebra by the crossed product construction,
but if $\alpha $ is not an inverse limit automorphism, this construction is
not possible because the covariance algebra has not a structure of locally $%
m $-convex $\ast $-algebra ($N_{p_{\lambda }}$ is not a submultiplicative $%
\ast $-seminorm). On the other hand, a transformation group $(X,G)$, with $X$
a countably compactly generated Hausdorff topological space ($X$ is a direct
limit of a countable family of compact spaces $\{K_{n}\}_{n}$), induces an
action $\alpha $ of $G$ on the pro-$C^{\ast }$-algebra $C(X)$, which is not
in general an inverse limit action and so, we can not associate to $(X,G)$ a
pro-$C^{\ast }$-algebra by the above construction.

It is well known that the crossed product of $C^{\ast }$-algebras is a
universal object for nondegenerate covariant representations (see, for
example, \cite{R}). In \cite{J3}, we show that the crossed product of pro-$%
C^{\ast }$-algebras by inverse limit actions has also the universal property
with respect to the nondegenerate covariant representations. In this paper,
we define the full crossed product of a pro-$C^{\ast }$-algebra $A\left[
\tau _{\Gamma }\right] $ by an action $\alpha $ of a locally compact group $%
G $ as a universal object for nondegenerate covariant representations and we
show that the full crossed product of pro-$C^{\ast }$-algebras exists for
strong bounded actions. Strong boundless of the action $\alpha $ is
essential to prove the existence of a covariant representation.
Unfortunately, if the action $\alpha $ of $G$ on $A\left[ \tau _{\Gamma }%
\right] $ is strongly bounded, then there is another family of $C^{\ast }$%
-seminorms on $A\left[ \tau _{\Gamma }\right] \ $which induces the same
topology on $A$, and $\alpha $ is an inverse limit action with respect to
this family of $C^{\ast }$-seminorms.

The organization of this paper is as follows. After preliminaries in Section
2, we introduce the notion of strong bounded action of a locally compact
group $G$ on a pro-$C^{\ast }$-algebra $A\left[ \tau _{\Gamma }\right] $ and
present some examples in Section 3. We show that there is a nondegenerate
covariant representation of a pro-$C^{\ast }$-algebra $A\left[ \tau _{\Gamma
}\right] $ with respect to a strong bounded action $\alpha $ of a locally
compact group $G$ on $A\left[ \tau _{\Gamma }\right] $ in Section 4. In
Section 5, we show that if $\alpha $ is strongly bounded, then there exists
the full pro-$C^{\ast }$-crossed product of $A\left[ \tau _{\Gamma }\right] $
by $\alpha $. Also, we show that the full pro-$C^{\ast }$-crossed product is
invariant under the conjugacy of the actions, and the full pro-$C^{\ast }$%
-crossed product of the maximal tensor product $A\left[ \tau _{\Gamma }%
\right] \otimes _{\max }B\left[ \tau _{\Gamma ^{\prime }}\right] $ by the
action $\alpha \otimes $id is isomorphic to the maximal tensor product of
the full pro-$C^{\ast }$-crossed product of $A\left[ \tau _{\Gamma }\right] $
by $\alpha $ and $B\left[ \tau _{\Gamma ^{\prime }}\right] $. In Section 6,
we define the notion of reduced pro-$C^{\ast }$-crossed product of a pro-$%
C^{\ast }$-algebra $A\left[ \tau _{\Gamma }\right] $ by a strong bounded
action $\alpha $, and show that the reduced pro-$C^{\ast }$-crossed product
is invariant under the conjugacy of the actions. Also, we show that the
reduced pro-$C^{\ast }$-crossed product of the minimal tensor product $A%
\left[ \tau _{\Gamma }\right] \otimes _{\min }B\left[ \tau _{\Gamma ^{\prime
}}\right] $ by the action $\alpha \otimes $id is isomorphic to the minimal
tensor product of the reduced pro-$C^{\ast }$-crossed product of $A\left[
\tau _{\Gamma }\right] $ by $\alpha $ and $B\left[ \tau _{\Gamma ^{\prime }}%
\right] $. It is known that the full crossed product of a $C^{\ast }$%
-algebra by an action $\alpha $ of an amenable locally compact group $G$
coincides with the reduced crossed product. We show that this result is
still valid for pro-$C^{\ast }$-crossed products.

\section{Preliminaries}

A seminorm $p$\ on a topological $\ast $-algebra $A$\ satisfies the $C^{\ast
}$-condition (or is a $C^{\ast }$-seminorm) if $p\left( a^{\ast }a\right)
=p\left( a\right) ^{2}$ for all $a\in A$. It is known that such a seminorm
must be submultiplicative ($p\left( ab\right) \leq p\left( a\right) p\left(
b\right) $ for all $a,b\in A$) and $\ast $-preserving ($p\left( a^{\ast
}\right) =p\left( a\right) $ for all $a\in A$).

A pro-$C^{\ast }$-algebra is a complete Hausdorff topological $\ast $%
-algebra $A$\ whose topology is given by a directed family of $C^{\ast }$%
-seminorms $\{p_{\lambda };\lambda \in \Lambda \}$. Other terms used for pro-%
$C^{\ast }$-algebras are: locally $C^{\ast }$-algebras (A. Inoue, M.
Fragoulopoulou, A. Mallios, etc.), $LMC^{\ast }$-algebras (G. Lassner, K.
Schm\"{u}dgen), $b^{\ast }$-algebras (C. Apostol).

Let $A\left[ \tau _{\Gamma }\right] $\ be a pro-$C^{\ast }$-algebra with
topology given by $\Gamma =\{p_{\lambda };\lambda \in \Lambda \}$ and let $B%
\left[ \tau _{\Gamma ^{\prime }}\right] $\ be a pro-$C^{\ast }$-algebra with
topology given by $\Gamma ^{\prime }=\{q_{\delta };\delta \in \Delta \}$. A
continuous $\ast $-morphism $\varphi :A\left[ \tau _{\Gamma }\right]
\rightarrow B\left[ \tau _{\Gamma ^{\prime }}\right] $ (that is, $\varphi $
is linear, $\varphi \left( ab\right) =\varphi (a)\varphi (b)$ and $\varphi
(a^{\ast })=\varphi (a)^{\ast }$ for all $a,b\in A$ and for each $q_{\delta
}\in \Gamma ^{\prime },$ there is $p_{\lambda }\in \Gamma $ such that $%
q_{\delta }\left( \varphi (a)\right) \leq p_{\lambda }\left( a\right) $ for
all $a\in A$) is called a pro-$C^{\ast }$-morphism. The pro-$C^{\ast }$%
-algebras $A\left[ \tau _{\Gamma }\right] $ and $B\left[ \tau _{\Gamma
^{\prime }}\right] $ are isomorphic if there is a pro-$C^{\ast }$%
-isomorphism $\varphi :A\left[ \tau _{\Gamma }\right] \rightarrow B\left[
\tau _{\Gamma ^{\prime }}\right] $ (that is, $\varphi $ is invertible, $%
\varphi $ and $\varphi ^{-1}$ are pro-$C^{\ast }$-morphisms).

If $\{A_{\lambda };\pi _{\lambda \mu }\}_{\lambda \geq \mu ,\lambda ,\mu \in
\Lambda }$ is an inverse system of $C^{\ast }$-algebras, then $%
\lim\limits_{\leftarrow \lambda }A_{\lambda }$ with topology given by the
family of $C^{\ast }$-seminorms $\{p_{\lambda }\}_{\lambda \in \Lambda },$
with $p_{\lambda }\left( \left( a_{\mu }\right) _{\mu \in \Lambda }\right)
=\left\Vert a_{\lambda }\right\Vert _{A_{\lambda }}$ for all $\lambda \in
\Lambda $, is a pro-$C^{\ast }$-algebra.

Let $A\left[ \tau _{\Gamma }\right] $\ be a pro-$C^{\ast }$-algebra with
topology given by $\Gamma =\{p_{\lambda };\lambda \in \Lambda \}$. For $%
\lambda \in \Lambda $,\ $\ker p_{\lambda }$\ is a closed $\ast $-bilateral
ideal and $A_{\lambda }=A/\ker p_{\lambda }$\ is a $C^{\ast }$-algebra in
the $C^{\ast }$-norm $\left\Vert \cdot \right\Vert _{p_{\lambda }}$\ induced
by $p_{\lambda }$\ (that is, $\left\Vert a\right\Vert _{p_{\lambda }}=$\ $p_{%
{\small \lambda }}(a),$ for all $a\in A$). The canonical map from $A$ to $%
A_{\lambda }$ is denoted by $\pi _{\lambda }^{A},$ $\pi _{\lambda
}^{A}\left( a\right) =a+\ker p_{\lambda }$ for all $a\in A$. For $\lambda
,\mu \in \Lambda $\ with $\mu \leq \lambda $\ there is a surjective $C^{\ast
}$-morphism $\pi _{\lambda \mu }^{A}:A_{\lambda }\rightarrow A_{\mu }$\ such
that $\pi _{\lambda \mu }^{A}\left( a+\ker {\small p}_{\lambda }\right)
=a+\ker p_{\mu }$, and then $\{A_{\lambda };\pi _{\lambda \mu
}^{A}\}_{\lambda ,\mu \in \Lambda }$\ is an inverse system of $C^{\ast }$%
-algebras. Moreover, pro-$C^{\ast }$-algebras$\ A\left[ \tau _{\Gamma }%
\right] $ and $\lim\limits_{\leftarrow \lambda }A_{\lambda }$ are isomorphic 
$\ $(Arens-Michael decomposition).

Let $\{\left( \mathcal{H}_{\lambda },\left\langle \cdot ,\cdot \right\rangle
_{\lambda }\right) \}_{\lambda \in \Lambda }$\ be a family of Hilbert spaces
such that $\mathcal{H}_{\mu }\subseteq \mathcal{H}_{\lambda }$ and $%
\left\langle \cdot ,\cdot \right\rangle _{\lambda }|_{\mathcal{H}_{\mu }}$ $%
= $\ $\left\langle \cdot ,\cdot \right\rangle _{\mu }$ for all $\lambda ,\mu
\in \Lambda $ with $\mu \leq \lambda $. $\mathcal{H}=\lim\limits_{\lambda
\rightarrow }\mathcal{H}_{\lambda }$\ with inductive limit topology is
called a locally Hilbert space.

Let $L(\mathcal{H})=\{T:\mathcal{H}\rightarrow \mathcal{H};T_{\lambda }=T|_{%
\mathcal{H}_{\lambda }}\in L(\mathcal{H}_{\lambda })$\ and $P_{\lambda \mu
}T_{\lambda }=T_{\lambda }P_{\lambda \mu }$ for\ \ all $\lambda ,\mu \in
\Lambda $\ with $\mu \leq \lambda \}$, where $P_{\lambda \mu }$ is the
projection of $\mathcal{H}_{{\small \lambda }}$ on $\mathcal{H}_{{\small \mu 
}}$. Clearly, $L(\mathcal{H})$\ is an algebra in an obvious way, and $%
T\rightarrow T^{\ast }$\ with $T^{\ast }|_{\mathcal{H}_{\lambda }}=\left(
T_{\lambda }\right) ^{\ast }$\ for all $\lambda \in \Lambda $\ is an
involution.

For each $\lambda \in \Lambda $,\ the map $p_{\lambda ,L(\mathcal{H})}:L(%
\mathcal{H})\rightarrow \lbrack 0,\infty )$\ given by $p_{\lambda ,L(%
\mathcal{H})}\left( T\right) =\left\Vert T|_{\mathcal{H}_{\lambda
}}\right\Vert _{L(\mathcal{H}_{\lambda })}$\ is a $C^{\ast }$-seminorm on $L(%
\mathcal{H})$, and with topology given by the family of $C^{\ast }$%
-seminorms $\{p_{\lambda ,L(\mathcal{H})}\}_{\lambda \in \Lambda }$,\ $L(%
\mathcal{H})$ becomes a pro-$C^{\ast }$-algebra.

$L(\mathcal{H})$ as a pro-$C^{\ast }$-algebra has an Arens-Michael
decomposition, given by the $C^{\ast }$-algebras $L(\mathcal{H})_{\lambda
}=L(\mathcal{H})/\ker p_{\lambda }$, $\lambda \in \Lambda $. Moreover, for
each $\lambda \in \Lambda $, the map $\varphi _{{\small \lambda }}:L(%
\mathcal{H})_{{\small \lambda }}\rightarrow L(\mathcal{H}_{{\small \lambda }%
})\ $given by $\varphi _{{\small \lambda }}\left( T+\ker p_{\lambda }\right)
=T|_{\mathcal{H}_{\lambda }}$ is an isometric $\ast $-morphism. The
canonical maps from $L(\mathcal{H})$ to $L(\mathcal{H})_{\lambda },\lambda
\in \Lambda $ are denoted by $\pi _{\lambda }^{\mathcal{H}},\lambda \in
\Lambda $, and $\pi _{{\small \lambda }}^{\mathcal{H}}(T)=T|_{\mathcal{H}%
_{\lambda }}$. \ For a given pro-$C^{\ast }$-algebra $A\left[ {\small \tau }%
_{\Gamma }\right] $\ there is a locally Hilbert space $\mathcal{H}$ such
that $A\left[ {\small \tau }_{\Gamma }\right] $ is isomorphic to a pro-$%
C^{\ast }$-subalgebra of $L(\mathcal{H})$ (see \cite[Theorem 5.1]{I}).

A multiplier of $A\left[ {\small \tau }_{\Gamma }\right] $ is a pair $\left(
l,r\right) $ of linear maps $l,r:A\left[ {\small \tau }_{\Gamma }\right]
\rightarrow A\left[ {\small \tau }_{\Gamma }\right] $ such that are
respectively left and right $A$-module homomorphisms and $r(a)b=al(b)$ for
all $a,b\in A$. The set $M(A\left[ {\small \tau }_{\Gamma }\right] )$ of all
multipliers of $A\left[ {\small \tau }_{\Gamma }\right] $ is a pro-$C^{\ast
} $-algebra with multiplication given by $\left( l_{1},r_{1}\right) \left(
l_{2},r_{2}\right) =\left( l_{1}l_{2},r_{2}r_{1}\right) $,$\ $ the
involution given by $\left( l,r\right) ^{\ast }=\left( r^{\ast },l^{\ast
}\right) $, where $r^{\ast }\left( a\right) =r\left( a^{\ast }\right) ^{\ast
}$ and $l^{\ast }\left( a\right) =l\left( a^{\ast }\right) ^{\ast }$ for all 
$a\in A$, and the topology given by the family of $C^{\ast }$-seminorms $%
\{p_{\lambda ,M(A\left[ {\small \tau }_{\Gamma }\right] )}\}_{\lambda \in
\Lambda }$, where $p_{\lambda ,M(A\left[ {\small \tau }_{\Gamma }\right]
)}\left( l,r\right) =\sup \{p_{\lambda }(l(a));p_{\lambda }(a)\leq 1\}$.
Moreover, for each $p_{\lambda }\in \Gamma $, the $C^{\ast }$-algebras $%
\left( M(A\left[ {\small \tau }_{\Gamma }\right] )\right) _{\lambda }$ and $%
M(A_{\lambda })$ are isomorphic. The strict topology on $M(A\left[ {\small %
\tau }_{\Gamma }\right] )$ is given by the family of seminorms $\{p_{\lambda
,a}\}_{\left( \lambda ,a\right) \in \Lambda \times A}$, where $p_{\lambda
,a}\left( l,r\right) =p_{\lambda }\left( l\left( a\right) \right)
+p_{\lambda }\left( r\left( a\right) \right) $, $M(A\left[ {\small \tau }%
_{\Gamma }\right] )$ is complete with respect to the strict topology and $A%
\left[ {\small \tau }_{\Gamma }\right] $ is dense in $M(A\left[ {\small \tau 
}_{\Gamma }\right] )$ (see \cite{P1} and \cite[Proposition 3.4]{J1}).

A pro-$C^{\ast }$-morphism $\varphi :A\left[ {\small \tau }_{\Gamma }\right]
\rightarrow M(B\left[ \tau _{\Gamma ^{\prime }}\right] )$ is nondegenerate
if $\left[ \varphi \left( A\right) B\right] =B\left[ \tau _{\Gamma ^{\prime
}}\right] $, where $\left[ \varphi \left( A\right) B\right] $ denotes the
closed subspace of $B\left[ \tau _{\Gamma ^{\prime }}\right] $ generated by $%
\{\varphi \left( a\right) b;$ $a\in A,b\in B\}$. A nondegenerate pro-$%
C^{\ast }$-morphism $\varphi :A\left[ {\small \tau }_{\Gamma }\right]
\rightarrow M(B\left[ \tau _{\Gamma ^{\prime }}\right] )$ extends to a
unique pro-$C^{\ast }$-morphism $\overline{\varphi }:M(A\left[ {\small \tau }%
_{\Gamma }\right] )\rightarrow M(B\left[ \tau _{\Gamma ^{\prime }}\right] )$.

\section{Group actions on pro-$C^{\ast }$-algebras}

Throughout this paper, $A\left[ \tau _{\Gamma }\right] $ is a pro-$C^{\ast }$%
-algebra with topology given by the family of $C^{\ast }$-seminorms $\Gamma
=\{p_{\lambda }\}_{\lambda \in \Lambda }$ and $G$ is a locally compact group.

\begin{definition}
\begin{enumerate}
\item \textit{An action} of $G$ on $A\left[ \tau _{\Gamma }\right] $ is a
group morphism $\alpha $ from $G$ to Aut$(A\left[ \tau _{\Gamma }\right] )$
such that the map $t\mapsto \alpha _{t}\left( a\right) $ from $G$ to $A\left[
\tau _{\Gamma }\right] $ is continuous for each $a\in A$.

\item An action $\alpha $ of $G$ on $A\left[ \tau _{\Gamma }\right] $ is 
\textit{strongly bounded}\textbf{,} if for each $\lambda \in \Lambda $ there
is $\mu \in \Lambda $ such that 
\begin{equation*}
p_{\lambda }\left( \alpha _{t}\left( a\right) \right) \leq p_{\mu }\left(
a\right)
\end{equation*}%
for all $t\in G$ and for all $a\in A$.

\item An action $\alpha $ is \textit{an inverse limit action}, if $%
p_{\lambda }\left( \alpha _{t}\left( a\right) \right) =p_{\lambda }\left(
a\right) $ for all $a\in A$, for all $t\in G$ and for all $\lambda \in
\Lambda $.
\end{enumerate}
\end{definition}

\begin{remark}
\begin{enumerate}
\item If $\alpha $ is an inverse limit action of $G$ on $A\left[ \tau
_{\Gamma }\right] $, then for each $\lambda \in \Lambda $,$\ $there is an
action $\alpha ^{\lambda }$ of $G\ $on $A_{\lambda }$ such that $\alpha
_{t}^{\lambda }\circ \pi _{\lambda }^{A}=\pi _{\lambda }^{A}\circ \alpha
_{t} $ for all $t\in G$, and then $\alpha _{t}=\lim\limits_{\leftharpoonup
\lambda }\alpha _{t}^{\lambda }$ for all $t\in G.$

\item Any inverse limit action of $G$ on $A\left[ \tau _{\Gamma }\right] $
is strongly bounded.

\item If $A$ is a $C^{\ast }$-algebra, then any action of $G$ on $A$ is
strongly bounded.

\item If $G$ is a compact group, then any action of $G$ on $A\left[ \tau
_{\Gamma }\right] $ is strongly bounded.
\end{enumerate}
\end{remark}

Let $X$ be a compactly countably generated Hausdorff topological space (that
is, $X$ is a direct limit of a countable family $\{K_{n}\}_{n}$ of compact
spaces). The $\ast $-algebra $C(X)$ of all continuous complex valued
functions on $X$ is a pro-$C^{\ast }$-algebra with topology given by the
family of $C^{\ast }$-seminorms $\{p_{K_{n}}\}_{n},$ where $p_{K_{n}}\left(
f\right) =\sup \{\left\vert f\left( x\right) \right\vert ;x\in K_{n}\}$.

\begin{example}
Let $(G,X)$ be a transformation group (that is, there is a continuous map $%
\left( t,x\right) $ $\mapsto t\cdot x$ from $G\times X$ to $X$ such that $%
e\cdot x=x$ and $s\cdot \left( t\cdot x\right) =\left( st\right) \cdot x$
for all $s,t\in G$ and for all $x\in X$) with $X=\lim\limits_{n\rightarrow
}K_{n}$ a compactly countably generated Hausdorff topological space. Then
there is an action $\alpha $ of $G$ on the pro-$C^{\ast }$-algebra $C(X)$,
given by 
\begin{equation*}
\alpha _{t}\left( f\right) \left( x\right) =f\left( t^{-1}\cdot x\right) 
\text{. }
\end{equation*}%
If for any positive integer $n$, there is a positive integer $m$ such that $%
G\cdot K_{n}\subseteq K_{m}$, the action $\alpha $ is strongly bounded,
since for each $n$, there is $m$ such that 
\begin{equation*}
p_{K_{n}}\left( \alpha _{t}\left( f\right) \right) =\sup \{\left\vert
f\left( t^{-1}\cdot x\right) \right\vert ;x\in K_{n}\}\leq \sup \{\left\vert
f\left( y\right) \right\vert ;y\in K_{m}\}=p_{K_{m}}\left( f\right)
\end{equation*}%
for all $f\in C(X)$ and for all $t\in G$. If $G\cdot K_{n}=K_{n}$ for all $n$%
, then $\alpha $ is an inverse limit action. Take, for instance, $\mathbb{R=}%
\lim\limits_{n\rightarrow }[-n,n]$. Suppose that $\mathbb{Z}_{2}$ actions on 
$\mathbb{R}$ by $\widehat{0}\cdot x$ $=x\ $and $\widehat{1}\cdot x$ $=2-x\ $%
for all $x\in \mathbb{R}$. Then $\left( \mathbb{Z}_{2},\mathbb{R}\right) $
is a transformation group such that for each positive integer $n$, $\mathbb{Z%
}_{2}\cdot \lbrack -n,n]\subseteq \lbrack -n-2,n+2]$.
\end{example}

\begin{example}
Let $X=\lim\limits_{n\rightarrow }K_{n}$ be a compactly countably generated
Hausdorff topological space and $h:X\rightarrow X$ a homeomorphism with the
property that for each positive integer $n$, there is a positive integer $m$
such that $h^{k}(K_{n})\subseteq K_{m}$ for all integers $k$. Then the map $%
n\mapsto \alpha _{n}\ $ from $\mathbb{Z}$ to Aut$\left( C(X)\right) $, where 
$\alpha _{n}(f)=f\circ h^{n}$, is a strong bounded action of $\mathbb{Z}$ to 
$C(X)$. If $h(K_{n})=K_{n}\ $ for all $n$, then $\alpha $ is an inverse
limit action. Take, for instance, $\mathbb{R=}\lim\limits_{n\rightarrow
}[-n,n]$, the map $h:\mathbb{R\rightarrow R}$ defined by $h(x)=1-x$ is a
homeomarphism such that for each positive integer $n$, $h^{k}([-n,n])%
\subseteq \lbrack -n-1,n+1]$ for all integers $k$.
\end{example}

\begin{example}
The $\ast $-algebra $C[0,1]$ equipped with the topology 'cc' of unifom
convergence on countable compact subsets is a pro-$C^{\ast }$-algebra
denoted by $C_{cc}[0,1]$ (see, for example, \cite[p. 104]{F}). The action of
\ $\mathbb{Z}_{2}$ on $C_{cc}[0,1]$ given by $\alpha _{\widehat{0}}=$id$%
_{C_{cc}[0,1]}$ and $\alpha _{\widehat{1}}\left( f\right) (x)=f(1-x)$ for
all $f\in C_{cc}[0,1]$ and for all $x\in \lbrack 0,1]$ is strongly bounded.
\end{example}

\begin{remark}
\begin{enumerate}
\item Let $\alpha $ be a strong bounded action of $G$ on $A\left[ \tau
_{\Gamma }\right] $. Then, for each $\lambda \in \Lambda $, the map $%
p^{\lambda }:A\rightarrow \lbrack 0,\infty )$ given by 
\begin{equation*}
p^{\lambda }\left( a\right) =\sup \{p_{\lambda }\left( \alpha _{t}\left(
a\right) \right) ;t\in G\}
\end{equation*}%
is a continuous $C^{\ast }$-seminorm on $A\left[ \tau _{\Gamma }\right] $.
Let $\Gamma ^{G}=\{p^{\lambda }\}_{\lambda \in \Lambda }$. Since, for each $%
\lambda \in \Lambda $, there is $\mu \in \Lambda $ such that 
\begin{equation*}
p_{\lambda }\leq p^{\lambda }\leq p_{\mu }\text{,}
\end{equation*}%
$\Gamma ^{G}$ defines on $A$ a structure of pro-$C^{\ast }$-algebra, and
moreover, the pro-$C^{\ast }$-algebras $A\left[ \tau _{\Gamma }\right] $ and 
$A\left[ \tau _{\Gamma }^{G}\right] $ are isomorphic.

\item If the action $\alpha $ of $G$ on $A\left[ \tau _{\Gamma }\right] $ is
strongly bounded, then $\alpha $ is an inverse limit action of $G$ on $A%
\left[ \tau _{\Gamma }^{G}\right] .$
\end{enumerate}
\end{remark}

\section{Covariant representations}

\begin{definition}
A pro-$C^{\ast }$-dynamical system is a triple $\left( G,\alpha ,A\left[
\tau _{\Gamma }\right] \right) $, where $G$ is a locally compact group, $A%
\left[ \tau _{\Gamma }\right] $ is a pro-$C^{\ast }$-algebra and $\alpha $
is an action of $G$ on $A\left[ \tau _{\Gamma }\right] $.
\end{definition}

\textit{A representation} of a pro-$C^{\ast }$-algebra $A\left[ {\small \tau 
}_{\Gamma }\right] $ on a Hilbert space $\mathcal{H}$ is a continuous $\ast $%
-morphism $\varphi :A\left[ {\small \tau }_{\Gamma }\right] \rightarrow L(%
\mathcal{H})$. A representation $\left( \varphi ,\mathcal{H}\right) $ of $A%
\left[ {\small \tau }_{\Gamma }\right] $ is\textit{\ nondegenerate} if $%
\left[ \varphi \left( A\right) \mathcal{H}\right] =\mathcal{H}$.

\begin{definition}
A covariant representation of $\left( G,\alpha ,A\left[ \tau _{\Gamma }%
\right] \right) $ on a Hilbert space $\mathcal{H}$ is a triple $\left(
\varphi ,u,\mathcal{H}\right) \ $consisting of a representation $\left(
\varphi ,\mathcal{H}\right) $ of $A\left[ \tau _{\Gamma }\right] $ on $%
\mathcal{H}$ and a unitary $\ast $-representation $\left( u,\mathcal{H}%
\right) $ of $G$ on $\mathcal{H}$ such that 
\begin{equation*}
\varphi \left( \alpha _{t}\left( a\right) \right) =u_{t}\varphi \left(
a\right) u_{t}^{\ast }
\end{equation*}%
for all $a\in A$ and for all $t\in G$. A covariant representation $\left(
\varphi ,u,\mathcal{H}\right) $ is nondegenerate if $\left( \varphi ,%
\mathcal{H}\right) $ is nondegenerate.

Two representations $\left( \varphi ,u,\mathcal{H}\right) $ and $\left( \psi
,v,\mathcal{K}\right) $ of $\left( G,\alpha ,A\left[ \tau _{\Gamma }\right]
\right) $ are unitarily equivalent if there is a unitary operator $U:%
\mathcal{H}\rightarrow \mathcal{K}$ such that $U\varphi \left( a\right)
=\psi \left( a\right) U$ for all $a\in A$ and $Uu_{t}=v_{t}U$ for all $t\in
G $.
\end{definition}

For each $p_{\lambda }\in \Gamma $, we denote by $\mathcal{R}_{\lambda
}\left( G,\alpha ,A\left[ \tau _{\Gamma }\right] \right) $ the collection of
all equivalence classes of nondegenerate covariant representations $\left(
\varphi ,u,\mathcal{H}\right) $ of $\left( G,\alpha ,A\left[ \tau _{\Gamma }%
\right] \right) $ with the property that $\left\Vert \varphi \left( a\right)
\right\Vert \leq p_{\lambda }\left( a\right) $ for all $a\in A$. Clearly, 
\begin{equation*}
\tbigcup\limits_{\lambda }\mathcal{R}_{\lambda }\left( G,\alpha ,A\left[
\tau _{\Gamma }\right] \right) =\mathcal{R}\left( G,\alpha ,A\left[ \tau
_{\Gamma }\right] \right) ,
\end{equation*}%
where $\mathcal{R}\left( G,\alpha ,A\left[ \tau _{\Gamma }\right] \right) $
denotes the collection of all equivalence classes of nondegenerate covariant
representations of $\left( G,\alpha ,A\left[ \tau _{\Gamma }\right] \right) $%
.

\begin{remark}
If $\alpha $ is an inverse limit action, then the map 
\begin{equation*}
\left( \varphi _{\lambda },u,\mathcal{H}\right) \rightarrow \left( \varphi
_{\lambda }\circ \pi _{\lambda }^{A},u,\mathcal{H}\right)
\end{equation*}%
is a bijection between $\mathcal{R}\left( G,\alpha ^{\lambda },A_{\lambda
}\right) $ and $\mathcal{R}_{\lambda }\left( G,\alpha ,A\left[ \tau _{\Gamma
}\right] \right) $ (see, \cite{J2}).
\end{remark}

By \cite{J2}, if $\alpha $ is an inverse limit action, then $\mathcal{R}%
\left( G,\alpha ,A\left[ \tau _{\Gamma }\right] \right) $ is non empty. From
this result and Remark 3.6, we conclude that if $\alpha $ is strongly
bounded, then $\mathcal{R}\left( G,\alpha ,A\left[ \tau _{\Gamma }\right]
\right) $ is non empty too. In the following proposition we give another
proof for this result.

\begin{proposition}
Let $\left( G,\alpha ,A\left[ \tau _{\Gamma }\right] \right) $ be a pro-$%
C^{\ast }$-dynamical system such that $\alpha $ is strongly bounded. Then
there is a covariant representation of $\left( G,\alpha ,A\left[ \tau
_{\Gamma }\right] \right) $.
\end{proposition}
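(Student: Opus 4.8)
The plan is to construct a covariant representation explicitly by the standard ``regular representation'' idea, adapted to the pro-$C^{\ast}$ setting, which is made available precisely because $\alpha$ is strongly bounded. First I would invoke Remark 3.6: since $\alpha$ is strongly bounded, passing to the equivalent family $\Gamma^{G}=\{p^{\lambda}\}_{\lambda\in\Lambda}$ we may assume without loss of generality that $\alpha$ is an \emph{inverse limit action} on $A\left[\tau_{\Gamma^{G}}\right]$, and that $A\left[\tau_{\Gamma^{G}}\right]\cong A\left[\tau_{\Gamma}\right]$ as pro-$C^{\ast}$-algebras. For an inverse limit action, for each $\lambda$ there is an action $\alpha^{\lambda}$ of $G$ on the $C^{\ast}$-algebra $A_{\lambda}=A/\ker p^{\lambda}$ with $\alpha^{\lambda}_{t}\circ\pi^{A}_{\lambda}=\pi^{A}_{\lambda}\circ\alpha_{t}$, so the question reduces to producing a covariant representation of the ordinary $C^{\ast}$-dynamical system $(G,\alpha^{\lambda},A_{\lambda})$ and then pulling it back along $\pi^{A}_{\lambda}$.

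Second, I would fix (say) a faithful nondegenerate representation $(\varphi_{0},\mathcal{H}_{0})$ of the $C^{\ast}$-algebra $A_{\lambda}$ — which exists since every $C^{\ast}$-algebra has one — and then form the regular covariant representation on $L^{2}(G,\mathcal{H}_{0})$ by the usual formulas
\begin{equation*}
(\widetilde{\varphi}(a)\xi)(s)=\varphi_{0}\bigl(\alpha^{\lambda}_{s^{-1}}(a)\bigr)\xi(s),\qquad (\widetilde{u}_{t}\xi)(s)=\xi(t^{-1}s),
\end{equation*}
for $a\in A_{\lambda}$, $\xi\in L^{2}(G,\mathcal{H}_{0})$, $s,t\in G$. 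A routine check shows $\widetilde{u}$ is a strongly continuous unitary representation of $G$, $\widetilde{\varphi}$ is a nondegenerate $\ast$-representation of $A_{\lambda}$, and the covariance identity $\widetilde{\varphi}(\alpha^{\lambda}_{t}(a))=\widetilde{u}_{t}\widetilde{\varphi}(a)\widetilde{u}_{t}^{\ast}$ holds; strong continuity of $t\mapsto\widetilde{\varphi}(\alpha^{\lambda}_{t}(a))\xi$ uses continuity of the action together with continuity of $\xi$ in the dense subspace of compactly supported continuous functions. Finally, set $\varphi=\widetilde{\varphi}\circ\pi^{A}_{\lambda}:A\left[\tau_{\Gamma}\right]\to L(\mathcal{H}_{0})\subseteq L(L^{2}(G,\mathcal{H}_{0}))$; this is continuous since $\|\varphi(a)\|=\|\widetilde{\varphi}(\pi^{A}_{\lambda}(a))\|\leq\|\pi^{A}_{\lambda}(a)\|_{A_{\lambda}}=p^{\lambda}(a)$, and the pair $(\varphi,\widetilde{u})$ is then a (nondegenerate) covariant representation of $(G,\alpha,A\left[\tau_{\Gamma}\right])$, because $\varphi(\alpha_{t}(a))=\widetilde{\varphi}(\pi^{A}_{\lambda}(\alpha_{t}(a)))=\widetilde{\varphi}(\alpha^{\lambda}_{t}(\pi^{A}_{\lambda}(a)))=\widetilde{u}_{t}\varphi(a)\widetilde{u}_{t}^{\ast}$.

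The main obstacle is not really an obstacle once the reduction is in place: the entire difficulty of the statement is concentrated in the strong-boundedness hypothesis, which is exactly what guarantees, via Remark 3.6, that $\alpha$ becomes an inverse limit action after re-coordinatising the topology, so that the $C^{\ast}$-quotients $A_{\lambda}$ carry honest induced $C^{\ast}$-dynamical systems; without strong boundedness the seminorm $p^{\lambda}$ need not be finite and the quotient argument collapses. Once that reduction is granted, the only things to verify carefully are (i) that $L(\mathcal{H}_{0})$ really sits inside $L(\mathcal{H})$ for a suitable locally Hilbert space (here one can just take $\mathcal{H}_{0}$ itself as a trivial one-term locally Hilbert space, so a representation of a pro-$C^{\ast}$-algebra on a Hilbert space is literally a continuous $\ast$-morphism into $L(\mathcal{H}_{0})$ as defined in Section~2), and (ii) the strong continuity of the two pieces of the regular representation, which is standard. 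I would present the proof essentially in the two-paragraph form above: first the reduction to $C^{\ast}$-algebras via Remark 3.6, then the classical regular-representation construction.
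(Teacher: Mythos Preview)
Your argument is correct, but it is precisely the route the paper mentions in the paragraph \emph{preceding} the proposition and then deliberately bypasses: the author writes that from Remark~3.6 and the known inverse-limit case one already knows $\mathcal{R}(G,\alpha,A[\tau_\Gamma])$ is nonempty, and that ``in the following proposition we give another proof for this result.'' So your reduction via $\Gamma^{G}$ is exactly the proof the paper acknowledges and sets aside.

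The paper's proof is direct and does not change the seminorm family. It starts from an arbitrary representation $(\varphi,\mathcal H)$ of $A[\tau_\Gamma]$, picks $\lambda$ with $\|\varphi(a)\|\le p_\lambda(a)$, and uses strong boundedness in its raw form --- there is $\mu$ with $p_\lambda(\alpha_{s^{-1}}(a))\le p_\mu(a)$ for all $s\in G$ --- to estimate
\[
\int_G\|\varphi(\alpha_{s^{-1}}(a))\xi(s)\|^2\,ds\le p_\mu(a)^2\|\xi\|^2,
\]
so that $\widetilde\varphi(a)\xi(s)=\varphi(\alpha_{s^{-1}}(a))\xi(s)$ defines a bounded operator on $L^2(G,\mathcal H)$; the pair $(\widetilde\varphi,\lambda_G^{\mathcal H})$ is then covariant. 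Your approach factors through the quotient $A_\lambda$ for the modified seminorm $p^\lambda$ and invokes the classical $C^\ast$-regular representation; the paper's approach stays with the original $\Gamma$ and isolates exactly the inequality where strong boundedness is consumed. The payoff of the paper's version is that the assignment $\varphi\mapsto\widetilde\varphi$ is defined for \emph{any} representation of $A[\tau_\Gamma]$ without first changing coordinates, and this construction is reused immediately (Remark~4.5, and later in the reduced crossed product via $\widetilde\alpha$). Your version is cleaner as a standalone existence proof but hides the estimate that the rest of the paper relies on.

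One small slip: you write $\varphi=\widetilde\varphi\circ\pi_\lambda^A:A[\tau_\Gamma]\to L(\mathcal H_0)\subseteq L(L^2(G,\mathcal H_0))$, but $\widetilde\varphi$ lands in $L(L^2(G,\mathcal H_0))$, not in $L(\mathcal H_0)$; the inclusion should be dropped.
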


\begin{proof}
Let $\left( \varphi ,\mathcal{H}\right) $ be a representation of $A\left[
\tau _{\Gamma }\right] $. Then there is $\lambda \in \Lambda $ such that $%
\left\Vert \varphi \left( a\right) \right\Vert \leq p_{\lambda }\left(
a\right) $ for all $a\in A$. Let $a\in A$ and $\xi \in L^{2}(G,\mathcal{H})$%
. Since, there is $p_{\mu }\in \Gamma $ such that 
\begin{eqnarray*}
\tint\limits_{G}\left\Vert \varphi \left( \alpha _{s^{-1}}\left( a\right)
\right) \left( \xi \left( s\right) \right) \right\Vert ^{2}ds &\leq
&\tint\limits_{G}\left\Vert \varphi \left( \alpha _{s^{-1}}\left( a\right)
\right) \right\Vert ^{2}\left\Vert \xi \left( s\right) \right\Vert ^{2}ds \\
&\leq &\tint\limits_{G}p_{\lambda }\left( \alpha _{s^{-1}}\left( a\right)
\right) ^{2}\left\Vert \xi \left( s\right) \right\Vert ^{2}ds\leq p_{\mu
}\left( a\right) ^{2}\left\Vert \xi \right\Vert ^{2},
\end{eqnarray*}%
the map $s\mapsto \varphi \left( \alpha _{s^{-1}}\left( a\right) \right)
\left( \xi \left( s\right) \right) $ defines an element in $L^{2}(G,\mathcal{%
H})$. Therefore, there is $\widetilde{\varphi }\left( a\right) \in L(L^{2}(G,%
\mathcal{H}))$ such that%
\begin{equation*}
\widetilde{\varphi }\left( a\right) \left( \xi \right) \left( s\right)
=\varphi \left( \alpha _{s^{-1}}\left( a\right) \right) \left( \xi \left(
s\right) \right) .
\end{equation*}%
In this way, we obtain a map $\widetilde{\varphi }:A\rightarrow L(L^{2}(G,%
\mathcal{H}))$. Moreover, $\widetilde{\varphi }$ is a continuous $\ast $%
-morphism, and then $\left( \widetilde{\varphi },L^{2}\left( G,\mathcal{H}%
\right) \right) $ is a representation of $A\left[ \tau _{\Gamma }\right] $.

Let $\left( \mathbf{\lambda }_{G}^{\mathcal{H}},L^{2}\left( G,\mathcal{H}%
\right) \right) $ be the unitary $\ast $-representation of $G$ on $%
L^{2}\left( G,\mathcal{H}\right) $ given by $\left( \mathbf{\lambda }_{G}^{%
\mathcal{H}}\right) _{t}\left( \xi \right) \left( s\right) =\xi \left(
t^{-1}s\right) $. It is easy to verify that $\left( \widetilde{\varphi },%
\mathbf{\lambda }_{G}^{\mathcal{H}},L^{2}\left( G,\mathcal{H}\right) \right) 
$ is a covariant representation of $\left( G,\alpha ,A\left[ \tau _{\Gamma }%
\right] \right) $.
\end{proof}

\begin{remark}
Let $\left( G,\alpha ,A\left[ \tau _{\Gamma }\right] \right) $ be a pro-$%
C^{\ast }$-dynamical system. Suppose that $\alpha $ is strongly bounded.
Then, for each representation $\left( \varphi ,\mathcal{H}\right) $ of $A%
\left[ \tau _{\Gamma }\right] $, $\ker \widetilde{\varphi }\subseteq \ker
\varphi $. Indeed, if $\ \widetilde{\varphi }\left( a\right) =0$, then $%
\varphi \left( \alpha _{s}\left( a\right) \right) \left( \xi \left( s\right)
\right) =0$ for all $s\in G$ and for all $\xi \in L^{2}\left( G,\mathcal{H}%
\right) $, whence $\varphi \left( a\right) \left( \xi \left( e\right)
\right) =0\ $for all $\xi \in L^{2}\left( G,\mathcal{H}\right) \ $and so $%
\varphi \left( a\right) =0$.
\end{remark}

\section{The full pro-$C^{\ast }$-crossed product}

Throughout this paper, $B\left[ \tau _{\Gamma ^{\prime }}\right] $ is a pro-$%
C^{\ast }$-algebra with topology given by the family of $C^{\ast }$%
-seminorms $\Gamma ^{\prime }=\{q_{\delta }\}_{\delta \in \Delta }$. Let $%
\left( G,\alpha ,A\left[ \tau _{\Gamma }\right] \right) $ be a pro-$C^{\ast
} $-dynamical system.

\begin{definition}
A covariant pro-$C^{\ast }$-morphism from $\left( G,\alpha ,A\left[ \tau
_{\Gamma }\right] \right) $ to a pro-$C^{\ast }$-algebra $B\left[ \tau
_{\Gamma ^{\prime }}\right] $ is a pair $\left( \varphi ,u\right) $
consisting of a pro-$C^{\ast }$-morphism $\varphi :A\left[ \tau _{\Gamma }%
\right] \rightarrow M(B\left[ \tau _{\Gamma ^{\prime }}\right] )$ and a
strict continuous group morphism $u:G\rightarrow \mathcal{U}(M(B\left[ \tau
_{\Gamma ^{\prime }}\right] ))$ such that 
\begin{equation*}
\varphi \left( \alpha _{t}\left( a\right) \right) =u_{t}\varphi \left(
a\right) u_{t}^{\ast }
\end{equation*}%
for all $t\in G$ and for all $a\in A$. A covariant pro-$C^{\ast }$-morphism $%
\left( \varphi ,u\right) $ from $\left( G,\alpha ,A\left[ \tau _{\Gamma }%
\right] \right) $ to $B\left[ \tau _{\Gamma ^{\prime }}\right] $ is
nondegenerate if $\left[ \varphi \left( A\right) B\right] =B\left[ \tau
_{\Gamma ^{\prime }}\right] $.
\end{definition}

\begin{theorem}
Let $\left( G,\alpha ,A\left[ \tau _{\Gamma }\right] \right) $ be a pro-$%
C^{\ast }$-dynamical system. If $\alpha $ is strongly bounded, then there is
a locally Hilbert space $\mathcal{H}$ and a covariant pro-$C^{\ast }$%
-morphism $\left( i_{A},i_{G}\right) $ from $\left( G,\alpha ,A\left[ \tau
_{\Gamma }\right] \right) $ to $L(\mathcal{H})$. Moreover, $i_{A}$ and $%
i_{G} $ are injective.
\end{theorem}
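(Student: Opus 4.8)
The plan is to form, for each $C^{\ast }$-seminorm $p_{\lambda }$, one covariant representation dominating the whole class $\mathcal{R}_{\lambda }\left( G,\alpha ,A\left[ \tau _{\Gamma }\right] \right) $, and to organise these, as $\lambda $ runs over the directed set $\Lambda $, into a single covariant pro-$C^{\ast }$-morphism into $L(\mathcal{H})$ for a suitable locally Hilbert space $\mathcal{H}$. For $\lambda \in \Lambda $ let $\left( \varphi ^{\lambda },u^{\lambda },\mathcal{H}^{\lambda }\right) $ be the Hilbert-space direct sum of a representative of each member of $\mathcal{R}_{\lambda }$ (a set-theoretically harmless construction); then $\varphi ^{\lambda }$ is a representation of $A\left[ \tau _{\Gamma }\right] $ with $\left\Vert \varphi ^{\lambda }(a)\right\Vert \leq p_{\lambda }(a)$, $u^{\lambda }$ is a unitary $\ast $-representation of $G$ on $\mathcal{H}^{\lambda }$, $\varphi ^{\lambda }\left( \alpha _{t}(a)\right) =u_{t}^{\lambda }\varphi ^{\lambda }(a)\left( u_{t}^{\lambda }\right) ^{\ast }$, and every member of $\mathcal{R}_{\lambda }$ is unitarily equivalent to a direct summand of $\left( \varphi ^{\lambda },u^{\lambda }\right) $. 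If $\mu \leq \lambda $, that is $p_{\mu }\leq p_{\lambda }$, then $\mathcal{R}_{\mu }\subseteq \mathcal{R}_{\lambda }$, so that $\mathcal{H}^{\mu }$ is canonically the orthogonal sum of the subfamily of summands of $\mathcal{H}^{\lambda }$ indexed by $\mathcal{R}_{\mu }$: thus $\mathcal{H}^{\mu }\subseteq \mathcal{H}^{\lambda }$, the inner products agree on $\mathcal{H}^{\mu }$, the orthogonal projection $P_{\lambda \mu }$ of $\mathcal{H}^{\lambda }$ onto $\mathcal{H}^{\mu }$ commutes with each $\varphi ^{\lambda }(a)$ and each $u_{t}^{\lambda }$ (all of which preserve every summand), and $\varphi ^{\lambda }(a)|_{\mathcal{H}^{\mu }}=\varphi ^{\mu }(a)$, $u_{t}^{\lambda }|_{\mathcal{H}^{\mu }}=u_{t}^{\mu }$. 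Hence $\{\mathcal{H}^{\lambda }\}_{\lambda \in \Lambda }$ is a directed family of Hilbert spaces of the kind considered in the Preliminaries and $\mathcal{H}:=\lim\limits_{\lambda \rightarrow }\mathcal{H}^{\lambda }$ is a locally Hilbert space.

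Next define $i_{A}:A\rightarrow L(\mathcal{H})$ by $i_{A}(a)|_{\mathcal{H}^{\lambda }}=\varphi ^{\lambda }(a)$ and $i_{G}:G\rightarrow \mathcal{U}(L(\mathcal{H}))$ by $i_{G}(t)|_{\mathcal{H}^{\lambda }}=u_{t}^{\lambda }$; the compatibilities recorded above make $i_{A}(a)$ and $i_{G}(t)$ well defined elements of $L(\mathcal{H})$. Then $i_{A}$ is a $\ast $-morphism (componentwise) with $p_{\lambda ,L(\mathcal{H})}(i_{A}(a))=\left\Vert \varphi ^{\lambda }(a)\right\Vert \leq p_{\lambda }(a)$, so $i_{A}$ is a pro-$C^{\ast }$-morphism; $i_{G}$ is a unitary-valued group morphism which is strict continuous, each $u^{\lambda }$ being strongly continuous; and $i_{A}\left( \alpha _{t}(a)\right) =i_{G}(t)i_{A}(a)i_{G}(t)^{\ast }$ holds on every $\mathcal{H}^{\lambda }$. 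Therefore $\left( i_{A},i_{G}\right) $ is a covariant pro-$C^{\ast }$-morphism from $\left( G,\alpha ,A\left[ \tau _{\Gamma }\right] \right) $ to $L(\mathcal{H})$ in the sense of Definition 5.1.

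It remains to prove $i_{A}$ and $i_{G}$ injective; we may assume $A\neq 0$. Since every member of $\mathcal{R}_{\mu }$ is a direct summand of $\left( \varphi ^{\mu },u^{\mu }\right) $, it suffices to exhibit, for each $a\in A\setminus \{0\}$, a member of some $\mathcal{R}_{\mu }$ not annihilating $a$, together with a member of some $\mathcal{R}_{\mu }$ whose $G$-part is faithful. Given $0\neq a$, choose $\lambda _{0}$ with $p_{\lambda _{0}}(a)\neq 0$ and a faithful nondegenerate representation $\rho $ of the $C^{\ast }$-algebra $A_{\lambda _{0}}$ on a Hilbert space $\mathcal{H}_{0}$, and put $\varphi _{0}:=\rho \circ \pi _{\lambda _{0}}^{A}$: a nondegenerate representation of $A\left[ \tau _{\Gamma }\right] $ with $\left\Vert \varphi _{0}(b)\right\Vert =p_{\lambda _{0}}(b)$ and $\varphi _{0}(a)\neq 0$. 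As $\alpha $ is strongly bounded, the construction in the proof of Proposition 4.4 applied to $\varphi _{0}$ produces the covariant representation $\left( \widetilde{\varphi _{0}},\mathbf{\lambda }_{G}^{\mathcal{H}_{0}},L^{2}(G,\mathcal{H}_{0})\right) $; a routine argument using that $\varphi _{0}$ is nondegenerate and that each $\alpha _{s^{-1}}$ is an automorphism shows $\widetilde{\varphi _{0}}$ is nondegenerate, Remark 4.5 gives $\widetilde{\varphi _{0}}(a)\neq 0$, and, choosing by strong boundedness $\mu $ with $p_{\lambda _{0}}\left( \alpha _{s}(\cdot )\right) \leq p_{\mu }(\cdot )$ for all $s\in G$, the estimate $\left\Vert \widetilde{\varphi _{0}}(b)\right\Vert \leq \sup_{s\in G}\left\Vert \varphi _{0}\left( \alpha _{s^{-1}}(b)\right) \right\Vert =\sup_{s\in G}p_{\lambda _{0}}\left( \alpha _{s^{-1}}(b)\right) \leq p_{\mu }(b)$ shows $\left( \widetilde{\varphi _{0}},\mathbf{\lambda }_{G}^{\mathcal{H}_{0}}\right) \in \mathcal{R}_{\mu }$. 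Thus $\varphi ^{\mu }(a)\neq 0$, so $i_{A}(a)\neq 0$; and $\mathbf{\lambda }_{G}^{\mathcal{H}_{0}}$ contains the left regular representation of $G$, hence is faithful, so $i_{G}(t)\neq 1$ for $t\neq e$. Therefore $i_{A}$ and $i_{G}$ are injective.

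The step I expect to require the most care is the gluing in the first paragraph: checking that the spaces $\mathcal{H}^{\lambda }$ genuinely form an inductive system of Hilbert spaces as in the Preliminaries and that the operators $\varphi ^{\lambda }(a)$ and $u_{t}^{\lambda }$ restrict coherently and commute with all the connecting projections $P_{\lambda \mu }$, so that $\mathcal{H}$ is a locally Hilbert space and $i_{A},i_{G}$ really take values in $L(\mathcal{H})$. Strong boundedness of $\alpha $ is used in two essential places: it is what makes the operators $\widetilde{\varphi _{0}}(b)$ of Proposition 4.4 well defined — hence what allows the covariant representations used in the injectivity step to exist — and it supplies, for each $\lambda $, a seminorm $p_{\mu }$ dominating $p_{\lambda }\left( \alpha _{\cdot }(\cdot )\right) $, which is precisely what bounds the norms occurring in $\mathcal{R}_{\lambda }$ and in the norm estimate above.
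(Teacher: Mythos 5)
Your proof is correct and follows essentially the same route as the paper: form the direct sum of all classes in $\mathcal{R}_{\lambda }\left( G,\alpha ,A\left[ \tau _{\Gamma }\right] \right) $ for each $\lambda $, glue these into a locally Hilbert space, define $i_{A}$ and $i_{G}$ componentwise, and deduce injectivity from the regular covariant representation of Proposition 4.4 together with Remark 4.5. The only (harmless) variation is that you realise the inductive system via the inclusions $\mathcal{R}_{\mu }\subseteq \mathcal{R}_{\lambda }$ for $\mu \leq \lambda $, whereas the paper takes $\mathcal{H}_{\lambda }=\oplus _{\mu \leq \lambda }H_{\mu }$, which avoids having to fix representatives consistently.
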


\begin{proof}
Let $\lambda \in \Lambda $. By Proposition 4.4, $\mathcal{R}_{\lambda
}\left( G,\alpha ,A\left[ \tau _{\Gamma }\right] \right) $ is non empty. Let 
$\left( \varphi ^{\lambda },u^{\lambda },H_{\lambda }\right) $ be the direct
sum of all equivalence classes of nondegenerate covariant reprsentations $%
\left( \varphi ,u,H_{\varphi ,u}\right) $ of $\left( G,\alpha ,A\left[ \tau
_{\Gamma }\right] \right) ,$ $\left( \varphi ,u,H_{\varphi ,u}\right) \in $ $%
\mathcal{R}_{\lambda }\left( G,\alpha ,A\left[ \tau _{\Gamma }\right]
\right) $. Then $\left( \varphi ^{\lambda },u^{\lambda },H_{\lambda }\right) 
$ is a nondegenerate covariant representation of $\left( G,\alpha ,A\left[
\tau _{\Gamma }\right] \right) $ such that $\left\Vert \varphi ^{\lambda
}\left( a\right) \right\Vert \leq p_{\lambda }\left( a\right) $\textbf{\ }%
for all\textbf{\ }$a\in A$\textbf{.}

Let $\mathcal{H}_{\lambda }=\oplus _{\mu \leq \lambda }H_{\mu }$. Then $%
\mathcal{H}=\lim\limits_{\lambda \rightarrow }\mathcal{H}_{\lambda }$ is a
locally Hilbert space. For $a\in A$, the map $i_{A}^{\lambda }\left(
a\right) :\mathcal{H}_{\lambda }\rightarrow \mathcal{H}_{\lambda }$ defined
by 
\begin{equation*}
i_{A}^{\lambda }\left( a\right) \left( \oplus _{\mu \leq \lambda }\xi _{\mu
}\right) =\oplus _{\mu \leq \lambda }\varphi ^{\mu }\left( a\right) \xi
_{\mu }
\end{equation*}%
is an element in $L(\mathcal{H}_{\lambda })$ and $\left\Vert i_{A}^{\lambda
}\left( a\right) \right\Vert \leq p_{\lambda }(a)$. Moreover, $%
i_{A}^{\lambda }\left( a^{\ast }\right) =i_{A}^{\lambda }\left( a\right)
^{\ast }$ and $i_{A}^{\lambda }\left( ab\right) =i_{A}^{\lambda }\left(
a\right) i_{A}^{\lambda }\left( b\right) $ for all $a,b\in A$. Clearly, $%
\left( i_{A}^{\lambda }\left( a\right) \right) _{\lambda }$ is a direct
system of bounded linear operators and $i_{A}\left( a\right)
=\lim\limits_{\lambda \rightarrow }i_{A}^{\lambda }\left( a\right) $ is an
element $L(\mathcal{H})\ $such that $i_{A}\left( a^{\ast }\right)
=i_{A}\left( a\right) ^{\ast }$ and $i_{A}\left( ab\right) =i_{A}\left(
a\right) i_{A}\left( b\right) $ for all $a,b\in A$. Moreover, 
\begin{equation*}
p_{\lambda ,L(\mathcal{H})}\left( i_{A}\left( a\right) \right) =\left\Vert
i_{A}^{\lambda }\left( a\right) \right\Vert \leq p_{\lambda }(a)
\end{equation*}%
for all $a\in A$ and for all $\lambda \in \Lambda $. Therefore, $i_{A}$ is a
pro-$C^{\ast }$-morphism.

For $t\in G$, the map $i_{G}^{\lambda }\left( t\right) :\mathcal{H}_{\lambda
}\rightarrow \mathcal{H}_{\lambda }$ defined by 
\begin{equation*}
i_{G}^{\lambda }\left( t\right) \left( \oplus _{\mu \leq \lambda }\xi _{\mu
}\right) =\oplus _{\mu \leq \lambda }u^{\mu }\left( t\right) \xi _{\mu }
\end{equation*}%
is a unitary element in $L(\mathcal{H}_{\lambda })$. Moreover, the map $%
t\mapsto i_{G}^{\lambda }\left( t\right) $ is a unitary $\ast $%
-representation of $G$ on $\mathcal{H}_{\lambda }$. Clearly, $\left(
i_{G}^{\lambda }\left( t\right) \right) _{\lambda }$ is a direct system of
unitary operators, and then $i_{G}\left( t\right) =\lim\limits_{\lambda
\rightarrow }i_{G}^{\lambda }\left( t\right) $ is a unitary element $L(%
\mathcal{H})$.$\ $Moreover, $t\mapsto i_{G}\left( t\right) $ is a group
morphism from $G$ to the group of unitary operators on $\mathcal{H}$, and
since for each $\xi \in \mathcal{H}$, the map $t\mapsto i_{G}\left( t\right)
\xi $ from $G$ to $\mathcal{H}$ is continuous, $t\mapsto i_{G}\left(
t\right) $ is a unitary $\ast $-representation of $G$ on $\mathcal{H}$. We
have%
\begin{eqnarray*}
i_{A}\left( \alpha _{t}\left( a\right) \right) \left( \oplus _{\mu \leq
\lambda }\xi _{\mu }\right) &=&i_{A}^{\lambda }\left( \alpha _{t}\left(
a\right) \right) \left( \oplus _{\mu \leq \lambda }\xi _{\mu }\right)
=\oplus _{\mu \leq \lambda }\varphi ^{\mu }\left( \alpha _{t}\left( a\right)
\right) \left( \xi _{\mu }\right) \\
&=&\oplus _{\mu \leq \lambda }u^{\mu }\left( t\right) \varphi ^{\mu }\left(
a\right) u^{\mu }\left( t\right) ^{\ast }\left( \xi _{\mu }\right) \\
&=&i_{G}(t)i_{A}\left( a\right) i_{G}(t)^{\ast }\left( \oplus _{\mu \leq
\lambda }\xi _{\mu }\right)
\end{eqnarray*}%
for all $a\in A$, for all $t\in G$ and for all $\oplus _{\mu \leq \lambda
}\xi _{\mu }\in \mathcal{H}_{\lambda },$ $\lambda \in \Lambda $, and so 
\begin{equation*}
i_{A}\left( \alpha _{t}\left( a\right) \right) =i_{G}(t)i_{A}\left( a\right)
i_{G}(t)^{\ast }
\end{equation*}%
for all $a\in A$ and for all $t\in G$.

Suppose that $i_{A}\left( a\right) =0$. Then $i_{A}^{\lambda }\left(
a\right) =0$\ for all $\lambda \in \Lambda $\ and so $\varphi \left(
a\right) =0$\ for all nondegenerate covariant representation $\left( \varphi
,u,H_{\varphi ,u}\right) $ of $\left( G,\alpha ,A\left[ \tau _{\Gamma }%
\right] \right) $. By Proposition 4.4 and Remark 4.5, $\psi \left( a\right)
=0$\ for all representations $\psi $\ of $A$. Therefore, $p_{\lambda }\left(
a\right) =0$\ for all $\lambda \in \Lambda $, and then $a=0$.

Suppose that $i_{G}\left( t\right) =$id$_{\mathcal{H}}$. Then $%
i_{G}^{\lambda }\left( t\right) =$id$_{\mathcal{H}_{\lambda }}$\ for all $%
\lambda \in \Lambda $,\ and so $u\left( t\right) =$id$_{H_{\varphi ,u}}$\
for all nondegenerate covariant representation $\left( \varphi ,u,H_{\varphi
,u}\right) $ of $\left( G,\alpha ,A\left[ \tau _{\Gamma }\right] \right) $,
whence we deduce that $t=e$.
\end{proof}

\begin{proposition}
Let $\left( G,\alpha ,A\left[ \tau _{\Gamma }\right] \right) $ be a pro-$%
C^{\ast }$-dynamical system. Then the following statements are equivalent.

\begin{enumerate}
\item $\alpha $ is an inverse limit action.

\item There is a locally Hilbert space $\mathcal{H}$ and a covariant pro-$%
C^{\ast }$-morphism $\left( i_{A},i_{G}\right) $\ from $\left( G,\alpha ,A%
\left[ \tau _{\Gamma }\right] \right) $ to $L(\mathcal{H})$ such that $%
p_{\lambda ,L\left( \mathcal{H}\right) }\left( i_{A}\left( a\right) \right)
=p_{\lambda }\left( a\right) $ for all $\lambda \in \Lambda $\textbf{\ } and 
$a\in A$.
\end{enumerate}
\end{proposition}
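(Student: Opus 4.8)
The plan is to prove the equivalence by establishing both implications separately.

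For the implication $(1) \Rightarrow (2)$: assuming $\alpha$ is an inverse limit action, I would invoke the construction already carried out in the proof of Theorem 5.2. That construction produces a locally Hilbert space $\mathcal{H}$ and a covariant pro-$C^{\ast}$-morphism $(i_A, i_G)$ from $(G, \alpha, A[\tau_\Gamma])$ to $L(\mathcal{H})$ with $p_{\lambda, L(\mathcal{H})}(i_A(a)) = \|i_A^\lambda(a)\| \le p_\lambda(a)$. It remains to show that this inequality is in fact an equality when $\alpha$ is an inverse limit action. For this I would use Remark 4.3: when $\alpha$ is an inverse limit action, every element of $\mathcal{R}(G, \alpha^\lambda, A_\lambda)$ lifts to an element of $\mathcal{R}_\lambda(G, \alpha, A[\tau_\Gamma])$ via precomposition with $\pi_\lambda^A$. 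Since $A_\lambda$ is a $C^\ast$-algebra, it has a faithful nondegenerate representation, and more to the point a faithful covariant representation of $(G, \alpha^\lambda, A_\lambda)$ exists (e.g. a regular covariant representation), so there is a $(\varphi, u) \in \mathcal{R}_\lambda$ with $\|\varphi(a)\| = \|\pi_\lambda^A(a)\|_{A_\lambda} = p_\lambda(a)$ for all $a$. Since $\varphi^\lambda$ is the direct sum of all such representations, $\|\varphi^\lambda(a)\| \ge p_\lambda(a)$, and combined with the reverse inequality already noted, $\|i_A^\lambda(a)\| = p_\lambda(a)$.

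For the implication $(2) \Rightarrow (1)$: suppose such $(i_A, i_G, \mathcal{H})$ exists with $p_{\lambda, L(\mathcal{H})}(i_A(a)) = p_\lambda(a)$ for all $\lambda$ and all $a$. I want to show $p_\lambda(\alpha_t(a)) = p_\lambda(a)$ for all $t, a, \lambda$. The covariance relation gives $i_A(\alpha_t(a)) = i_G(t) i_A(a) i_G(t)^\ast$, with $i_G(t)$ a unitary in $M(L(\mathcal{H}))$, hence $i_G(t) \in L(\mathcal{H})$ acts as a unitary on each $\mathcal{H}_\lambda$ commuting with the projections. Conjugation by a unitary preserves each $C^\ast$-seminorm $p_{\lambda, L(\mathcal{H})}$ — indeed $p_{\lambda, L(\mathcal{H})}(uTu^\ast) = \|(uTu^\ast)|_{\mathcal{H}_\lambda}\| = \|u|_{\mathcal{H}_\lambda} \cdot T|_{\mathcal{H}_\lambda} \cdot (u|_{\mathcal{H}_\lambda})^\ast\| = \|T|_{\mathcal{H}_\lambda}\| = p_{\lambda, L(\mathcal{H})}(T)$, using that $u|_{\mathcal{H}_\lambda}$ is a unitary on $\mathcal{H}_\lambda$. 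Therefore
\begin{equation*}
p_\lambda(\alpha_t(a)) = p_{\lambda, L(\mathcal{H})}(i_A(\alpha_t(a))) = p_{\lambda, L(\mathcal{H})}(i_G(t) i_A(a) i_G(t)^\ast) = p_{\lambda, L(\mathcal{H})}(i_A(a)) = p_\lambda(a),
\end{equation*}
which is exactly the inverse limit action condition.

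The main obstacle I anticipate is in the $(1) \Rightarrow (2)$ direction: producing a covariant representation of $(G, \alpha^\lambda, A_\lambda)$ that is \emph{faithful} on $A_\lambda$, so that the direct sum $\varphi^\lambda$ satisfies $\|\varphi^\lambda(a)\| = p_\lambda(a)$ rather than merely $\le p_\lambda(a)$. One clean way around this is the regular covariant representation: starting from any faithful nondegenerate representation $(\varphi_0, \mathcal{H}_0)$ of $A_\lambda$, the induced pair $(\widetilde{\varphi_0}, \lambda_G^{\mathcal{H}_0})$ on $L^2(G, \mathcal{H}_0)$ from the construction in Proposition 4.4 is covariant, and one checks $\|\widetilde{\varphi_0}(a)\| = \|\varphi_0(a)\|$ by evaluating on $\xi$ concentrated near $e \in G$ — this faithfulness computation (which also underlies Remark 4.5) is the one genuinely substantive point and should be spelled out. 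Everything else is bookkeeping with the Arens--Michael decomposition and the definition of the seminorms on $L(\mathcal{H})$.
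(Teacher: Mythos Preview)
Your proposal is correct and, for the implication $(2)\Rightarrow(1)$, it is essentially verbatim what the paper does: apply the covariance relation, note that $i_G(t)$ is unitary in $L(\mathcal{H})$ (which is unital, so $M(L(\mathcal{H}))=L(\mathcal{H})$), and conclude that conjugation preserves each seminorm $p_{\lambda,L(\mathcal{H})}$.

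For $(1)\Rightarrow(2)$ there is a difference in presentation, though not in substance. The paper simply cites \cite[Proposition 3.1]{J3} and \cite[Theorem 5.1]{I} and moves on. You instead reuse the locally Hilbert space $\mathcal{H}$ already built in Theorem 5.2 and argue directly that the inequality $p_{\lambda,L(\mathcal{H})}(i_A(a))\le p_\lambda(a)$ is an equality when $\alpha$ is an inverse limit action, by producing a covariant representation in $\mathcal{R}_\lambda(G,\alpha,A[\tau_\Gamma])$ that is faithful on $A_\lambda$ via the regular representation construction of Proposition 4.4. This is a legitimate and self-contained route: since $A_\lambda$ is a $C^{\ast}$-algebra, the injectivity of $\widetilde{\varphi_0}$ (implied by Remark 4.5 when $\varphi_0$ is faithful) already forces $\widetilde{\varphi_0}$ to be isometric, so the ``concentrate $\xi$ near $e$'' computation you anticipate is not even strictly necessary. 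What you gain over the paper's citation is a proof internal to the paper's own framework; what the paper gains is brevity.
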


\begin{proof}
$(1)\Rightarrow (2)$ See \cite[Proposition 3.1]{J3} and \cite[Theorem 5.1]{I}%
.

$(2)\Rightarrow (1)$ From%
\begin{equation*}
i_{A}\left( \alpha _{t}\left( a\right) \right) =i_{G}\left( t\right)
i_{A}\left( a\right) i_{G}\left( t\right) ^{\ast }
\end{equation*}%
for all $t\in G$ and for all $a\in A$, and taking into account that $%
i_{G}\left( t\right) $ is a unitary element in $L(\mathcal{H})$ for all $%
t\in G$, we deduce that 
\begin{eqnarray*}
p_{\lambda }\left( \alpha _{t}\left( a\right) \right) &=&p_{\lambda ,L\left( 
\mathcal{H}\right) }\left( i_{A}\left( \alpha _{t}\left( a\right) \right)
\right) =p_{\lambda ,L\left( \mathcal{H}\right) }\left( i_{G}\left( t\right)
i_{A}\left( a\right) i_{G}\left( t\right) ^{\ast }\right) \\
&=&p_{\lambda ,L\left( \mathcal{H}\right) }\left( i_{A}\left( a\right)
\right) =p_{\lambda }(a)
\end{eqnarray*}%
for all $t\in G$, for all $a\in A$ and for all $t\in G$. Therefore, $\alpha $
is an inverse limit action.
\end{proof}

If $u$ is a strict continuous group morphism from $G$ to $\mathcal{U}\left(
M\left( B\left[ \tau _{\Gamma ^{\prime }}\right] \right) \right) $, then
there is a $\ast $-morphism $u:C_{c}\left( G\right) \rightarrow M\left( B%
\left[ \tau _{\Gamma ^{\prime }}\right] \right) $ given by $u\left( f\right)
=\tint\limits_{G}f(s)u_{s}ds,$ where $ds$ denotes the Haar measure on $G$
(sse \cite{J2})$.$

\begin{definition}
Let $\left( G,\alpha ,A\left[ \tau _{\Gamma }\right] \right) $ be a pro-$%
C^{\ast }$-dynamical system. A pro-$C^{\ast }$-algebra, denoted by $G\times
_{\alpha }A\left[ \tau _{\Gamma }\right] $,$\ $together with a covariant pro-%
$C^{\ast }$-morphism $\left( \iota _{A},\iota _{G}\right) $ from $\left(
G,\alpha ,A\left[ \tau _{\Gamma }\right] \right) $ to $G\times _{\alpha }A%
\left[ \tau _{\Gamma }\right] $ which verifies the following:

\begin{enumerate}
\item for each nondegenerate covariant representation $\left( \varphi ,u,%
\mathcal{H}\right) $ of $\left( G,\alpha ,A\left[ \tau _{\Gamma }\right]
\right) $, there is a nondegenerate representation $\left( \Phi ,\mathcal{H}%
\right) $ of $G\times _{\alpha }A\left[ \tau _{\Gamma }\right] $ such that $%
\overline{\Phi }\circ \iota _{A}=\varphi $ and $\overline{\Phi }\circ \iota
_{G}=u;$

\item $\overline{\text{span}\{\iota _{A}\left( a\right) \iota _{G}\left(
f\right) ;a\in A,f\in C_{c}\left( G\right) \}}=G\times _{\alpha }A\left[
\tau _{\Gamma }\right] ;$
\end{enumerate}

is called the full pro-$C^{\ast }$-crossed product of $A\left[ \tau _{\Gamma
}\right] $ by $\alpha $.
\end{definition}

\begin{remark}
The covariant morphism $\left( \iota _{A},\iota _{G}\right) $ from the above
definition is nondegenerate.
\end{remark}

\begin{proposition}
Let $\left( G,\alpha ,A\left[ \tau _{\Gamma }\right] \right) $ be a pro-$%
C^{\ast }$-dynamical system such that there is a full pro-$C^{\ast }$%
-crossed product of $A\left[ \tau _{\Gamma }\right] $ by $\alpha $ and $%
\left( \varphi ,u\right) $ a nondegenerate covariant morphism from $\left(
G,\alpha ,A\left[ \tau _{\Gamma }\right] \right) $ to a pro-$C^{\ast }$%
-algebra $B\left[ \tau _{\Gamma ^{\prime }}\right] $. Then there is a unique
nondegenerate pro-$C^{\ast }$-morphism $\varphi \times u:G\times _{\alpha }A%
\left[ \tau _{\Gamma }\right] \rightarrow M(B\left[ \tau _{\Gamma ^{\prime }}%
\right] )$ such that 
\begin{equation*}
\overline{\varphi \times u}\circ \iota _{A}=\varphi \text{ and }\overline{%
\varphi \times u}\circ \iota _{G}=u.
\end{equation*}%
Moreover, the map $\left( \varphi ,u\right) \rightarrow \varphi \times u$ is
a bijection between nondegenerate covariant morphisms of $\left( G,\alpha ,A%
\left[ \tau _{\Gamma }\right] \right) $ onto nondegenerate morphisms of $%
G\times _{\alpha }A\left[ \tau _{\Gamma }\right] $.
\end{proposition}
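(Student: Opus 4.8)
The plan is to construct $\varphi \times u$ from the universal property and then establish uniqueness and bijectivity. First I would apply Definition 5.4(1): given the nondegenerate covariant morphism $(\varphi, u)$ from $(G,\alpha,A[\tau_\Gamma])$ to $B[\tau_{\Gamma'}]$, I need to produce from it a nondegenerate covariant representation on a genuine Hilbert space so that the universal property applies. For this I would use Theorem 5.1 of Inoue to realize $B[\tau_{\Gamma'}]$ (hence $M(B[\tau_{\Gamma'}])$, via the isomorphism $(M(B[\tau_{\Gamma'}]))_\delta \cong M(B_\delta)$ from the preliminaries) inside some $L(\mathcal{K})$, and then restrict to the Hilbert-space levels $\mathcal{K}_\delta$. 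Concretely, for each $\delta \in \Delta$ the composition of $\varphi$ with $\pi_\delta^{\mathcal{K}}$ and the canonical embedding gives a representation $\varphi_\delta$ of $A[\tau_\Gamma]$ on $\mathcal{K}_\delta$, and similarly a unitary representation $u_\delta$ of $G$, and the covariance relation $\varphi(\alpha_t(a)) = u_t \varphi(a) u_t^*$ descends to $\varphi_\delta(\alpha_t(a)) = (u_\delta)_t \varphi_\delta(a) (u_\delta)_t^*$. Nondegeneracy of $(\varphi,u)$ translates into nondegeneracy of each $(\varphi_\delta, u_\delta)$.

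Next I would invoke Definition 5.4(1) for each such covariant representation $(\varphi_\delta, u_\delta, \mathcal{K}_\delta)$ to obtain a nondegenerate representation $\Phi_\delta$ of $G \times_\alpha A[\tau_\Gamma]$ with $\overline{\Phi_\delta} \circ \iota_A = \varphi_\delta$ and $\overline{\Phi_\delta} \circ \iota_G = u_\delta$. The family $\{\Phi_\delta\}_\delta$ is compatible with the projections $P_{\delta\delta'}$ because it is built from a single global morphism $\varphi$, so the $\Phi_\delta$ assemble into a pro-$C^\ast$-morphism $\Phi: G\times_\alpha A[\tau_\Gamma] \to L(\mathcal{K})$ whose image lies in (the copy of) $M(B[\tau_{\Gamma'}])$; I define $\varphi \times u := \Phi$. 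Because $\overline{\Phi}\circ\iota_A = \varphi$ on each level and $\varphi$ is nondegenerate, $\varphi\times u$ is nondegenerate, and it extends to $\overline{\varphi\times u}$ on the multiplier algebra by the last paragraph of the preliminaries; the relations $\overline{\varphi\times u}\circ\iota_A=\varphi$ and $\overline{\varphi\times u}\circ\iota_G=u$ then hold by construction. Uniqueness follows from Definition 5.4(2): any two pro-$C^\ast$-morphisms satisfying these two relations agree on $\iota_A(a)\iota_G(f)$ for $a\in A$ and $f\in C_c(G)$ (using that $\iota_G(f)=\int_G f(s)\iota_G(s)\,ds$ and continuity), and these elements span a dense subspace, so the two morphisms coincide.

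For the final bijectivity claim, I would define the inverse map: given a nondegenerate pro-$C^\ast$-morphism $\Psi: G\times_\alpha A[\tau_\Gamma] \to M(B[\tau_{\Gamma'}])$, set $\varphi := \overline{\Psi}\circ\iota_A$ and $u := \overline{\Psi}\circ\iota_G$ (the extension $\overline{\Psi}$ to $M(G\times_\alpha A[\tau_\Gamma])$ exists since $\Psi$ is nondegenerate, using Remark 5.5 for the nondegeneracy of $(\iota_A,\iota_G)$). One checks that $(\varphi,u)$ is a nondegenerate covariant morphism from $(G,\alpha,A[\tau_\Gamma])$ to $B[\tau_{\Gamma'}]$: covariance is inherited from $i_A(\alpha_t(a)) = i_G(t) i_A(a) i_G(t)^*$ in $G\times_\alpha A[\tau_\Gamma]$ by applying $\overline{\Psi}$, strict continuity of $u$ from that of $\iota_G$, and nondegeneracy from density of $\operatorname{span}\{\iota_A(a)\iota_G(f)\}$ together with nondegeneracy of $\Psi$. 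That $(\varphi,u)\mapsto\varphi\times u$ and $\Psi\mapsto(\overline{\Psi}\circ\iota_A,\overline{\Psi}\circ\iota_G)$ are mutually inverse then follows from the uniqueness clause already proved.

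The main obstacle I anticipate is the passage from the covariant \emph{morphism} $(\varphi,u)$ into $M(B[\tau_{\Gamma'}])$ to honest covariant \emph{representations} on Hilbert spaces to which Definition 5.4(1) literally applies, and then verifying that the level-wise representations $\Phi_\delta$ glue into a well-defined pro-$C^\ast$-morphism with range in $M(B[\tau_{\Gamma'}])$ rather than merely in $L(\mathcal{K})$ — this compatibility and the identification $(M(B[\tau_{\Gamma'}]))_\delta \cong M(B_\delta)$ are where care is needed. The rest is a routine diagram chase using the dense-span condition.
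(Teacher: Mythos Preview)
Your proposal is correct and follows essentially the same route as the paper. The only cosmetic difference is that the paper, instead of invoking Inoue's embedding of $B[\tau_{\Gamma'}]$ into some $L(\mathcal{K})$, works directly with the Arens--Michael decomposition: for each $q_\delta\in\Gamma'$ it picks a faithful nondegenerate representation $(\psi_\delta,\mathcal{H})$ of $B_\delta$, forms the covariant Hilbert-space representation $(\overline{\psi_\delta}\circ\overline{\pi_\delta^B}\circ\varphi,\ \overline{\psi_\delta}\circ\overline{\pi_\delta^B}\circ u)$, applies Definition~5.4(1), and then composes with $\overline{\psi_\delta^{-1}}$ to land in $M(B_\delta)$ before gluing over $\delta$ --- which is exactly your ``level-wise $\Phi_\delta$ with image in $M(B_\delta)$'' step, and your anticipated obstacle is precisely the point the paper handles via $\psi_\delta^{-1}$ and the compatibility $\overline{\pi_{\delta_1\delta_2}^B}\circ\Phi_{\delta_1}=\Phi_{\delta_2}$.
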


\begin{proof}
Let $q_{\delta }\in \Gamma ^{\prime }$ and $\left( \psi _{\delta },\mathcal{H%
}\right) $ a faithful nondegenerate representation of $B_{\delta }$. Then, $(%
\overline{\psi _{\delta }}\circ \overline{\pi _{\delta }^{B}}\circ \varphi ,$
$\overline{\psi _{\delta }}\circ \overline{\pi _{\delta }^{B}}\circ u,%
\mathcal{H)}$ is a nondegenerate covariant representation of $\left(
G,\alpha ,A\left[ \tau _{\Gamma }\right] \right) $, and by Definition 5.4,
there is a nondegenerate representation $\left( \phi _{\delta },\mathcal{H}%
\right) $ of $G\times _{\alpha }A\left[ \tau _{\Gamma }\right] $ such that 
\begin{equation*}
\overline{\phi _{\delta }}\circ \iota _{A}=\overline{\psi _{\delta }}\circ 
\overline{\pi _{\delta }^{B}}\circ \varphi \text{ and }\overline{\phi
_{\delta }}\circ \iota _{G}=\overline{\psi _{\delta }}\circ \overline{\pi
_{\delta }^{B}}\circ u.
\end{equation*}%
Let $\Phi _{\delta }=\overline{\psi _{\delta }^{-1}}\circ \phi _{\delta }$.
Then $\Phi _{\delta }$ is a nondegenerate pro-$C^{\ast }$-morphism from $%
G\times _{\alpha }A\left[ \tau _{\Gamma }\right] $ to $M(B_{\delta })$.
Moreover, for $q_{\delta _{1}},q_{\delta _{2}}\in \Gamma ^{\prime }$ with $%
q_{\delta _{1}}\geq q_{\delta _{2}}$, we have $\overline{\pi _{\delta
_{1}\delta _{2}}^{B}}\circ \Phi _{\delta _{1}}=\Phi _{_{\delta _{2}}}$.
Therefore, there is a nondegenerate pro-$C^{\ast }$-morphism $\varphi \times
u:G\times _{\alpha }A\left[ \tau _{\Gamma }\right] \rightarrow M(B\left[
\tau _{\Gamma ^{\prime }}\right] )$ such that 
\begin{equation*}
\overline{\pi _{\delta }^{B}}\circ \varphi \times u=\Phi _{\delta }
\end{equation*}%
for all $q_{\delta }\in \Gamma ^{\prime }$. Moreover, $\overline{\varphi
\times u}\circ \iota _{A}=\varphi $ and $\overline{\varphi \times u}\circ
\iota _{G}=u$, and since $\{\iota _{A}\left( a\right) \iota _{G}\left(
f\right) ,$ $a\in A,$ $f\in C_{c}\left( G\right) \}$ generates $G\times
_{\alpha }A\left[ \tau _{\Gamma }\right] $, $\varphi \times u$ is unique
with the above properties.

Let $\Phi :G\times _{\alpha }A\left[ \tau _{\Gamma }\right] \rightarrow M($ $%
B\left[ \tau _{\Gamma ^{\prime }}\right] )$ be a nondegenerate pro-$C^{\ast
} $-morphism. Then $\varphi =$ $\overline{\Phi }\circ \iota _{A}$ is a
nondegenerate pro-$C^{\ast }$-morphism from $A\left[ \tau _{\Gamma }\right] $
to $M(B\left[ \tau _{\Gamma ^{\prime }}\right] )$ and $u=$ $\overline{\Phi }%
\circ \iota _{G}$ is a strict continuous morphism from $G$ to $\mathcal{U}%
(M(B\left[ \tau _{\Gamma ^{\prime }}\right] ))$, since $\iota _{G}$ is a
strict continuous morphism from $G$ to $M(G\times _{\alpha }A\left[ \tau
_{\Gamma }\right] )$ and $\overline{\Phi }$ is strongly continuous on the
bounded subsets of $M(G\times _{\alpha }A\left[ \tau _{\Gamma }\right] )$.
Moreover, $\left( \varphi ,u\right) $ is a nondegenerate covariant morphism
from $A\left[ \tau _{\Gamma }\right] $ to $B\left[ \tau _{\Gamma ^{\prime }}%
\right] $, and $\varphi \times u=\Phi $. If $\left( \psi ,v\right) $ is
another nondegenerate covariant morphism from $A\left[ \tau _{\Gamma }\right]
$ to $B\left[ \tau _{\Gamma ^{\prime }}\right] $ such that $\psi \times
v=\Phi $, then $\psi =$ $\overline{\Phi }\circ \iota _{A}=\varphi $ and $v=$ 
$\overline{\Phi }\circ \iota _{G}=u$.
\end{proof}

The following corollary provides uniqueness of the full pro-$C^{\ast }$%
-crossed product by strong bounded actions.

\begin{corollary}
Let $\left( G,\alpha ,A\left[ \tau _{\Gamma }\right] \right) $ be a pro-$%
C^{\ast }$-dynamical system such that there is a full pro-$C^{\ast }$%
-crossed product of $A\left[ \tau _{\Gamma }\right] $ by $\alpha $\textbf{. }%
Then the full pro-$C^{\ast }$-crossed product of $A\left[ \tau _{\Gamma }%
\right] $ by $\alpha $ is unique up to a pro-$C^{\ast }$-isomorphism.
\end{corollary}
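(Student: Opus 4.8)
The plan is to derive uniqueness from the universal property encoded in Definition 5.4, exactly as one does for crossed products of $C^{\ast }$-algebras, using Proposition 5.6 to turn the two defining conditions into a pair of mutually inverse pro-$C^{\ast }$-morphisms. Suppose $\left( G\times _{\alpha }A\left[ \tau _{\Gamma }\right] ,\left( \iota _{A},\iota _{G}\right) \right) $ and $\left( \left( G\times _{\alpha }A\left[ \tau _{\Gamma }\right] \right) ^{\prime },\left( \iota _{A}^{\prime },\iota _{G}^{\prime }\right) \right) $ both satisfy the conditions of Definition 5.4. First I would observe that, by Remark 5.5, each of $\left( \iota _{A},\iota _{G}\right) $ and $\left( \iota _{A}^{\prime },\iota _{G}^{\prime }\right) $ is a nondegenerate covariant morphism (into the multiplier algebra of the respective crossed product), so Proposition 5.6 applies with $B\left[ \tau _{\Gamma ^{\prime }}\right] $ taken to be the \emph{other} crossed product.

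Concretely, applying Proposition 5.6 to the nondegenerate covariant morphism $\left( \iota _{A}^{\prime },\iota _{G}^{\prime }\right) $ from $\left( G,\alpha ,A\left[ \tau _{\Gamma }\right] \right) $ to $\left( G\times _{\alpha }A\left[ \tau _{\Gamma }\right] \right) ^{\prime }$ yields a unique nondegenerate pro-$C^{\ast }$-morphism $\Theta =\iota _{A}^{\prime }\times \iota _{G}^{\prime }:G\times _{\alpha }A\left[ \tau _{\Gamma }\right] \rightarrow M\left( \left( G\times _{\alpha }A\left[ \tau _{\Gamma }\right] \right) ^{\prime }\right) $ with $\overline{\Theta }\circ \iota _{A}=\iota _{A}^{\prime }$ and $\overline{\Theta }\circ \iota _{G}=\iota _{G}^{\prime }$. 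Symmetrically, applying Proposition 5.6 in the other direction (with the roles of the two crossed products exchanged, which is legitimate since the universal property in Definition 5.4 is symmetric in form) produces a unique nondegenerate pro-$C^{\ast }$-morphism $\Theta ^{\prime }:\left( G\times _{\alpha }A\left[ \tau _{\Gamma }\right] \right) ^{\prime }\rightarrow M\left( G\times _{\alpha }A\left[ \tau _{\Gamma }\right] \right) $ with $\overline{\Theta ^{\prime }}\circ \iota _{A}^{\prime }=\iota _{A}$ and $\overline{\Theta ^{\prime }}\circ \iota _{G}^{\prime }=\iota _{G}$.

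Next I would show that $\Theta ^{\prime }\circ \Theta $ and $\Theta \circ \Theta ^{\prime }$ (more precisely, $\overline{\Theta ^{\prime }}\circ \Theta $ and $\overline{\Theta }\circ \Theta ^{\prime }$) are the respective identity morphisms, and in particular that $\Theta $ and $\Theta ^{\prime }$ both map into the crossed products themselves, not merely their multiplier algebras. The composition $\overline{\Theta ^{\prime }}\circ \Theta $ is a nondegenerate pro-$C^{\ast }$-morphism $G\times _{\alpha }A\left[ \tau _{\Gamma }\right] \rightarrow M\left( G\times _{\alpha }A\left[ \tau _{\Gamma }\right] \right) $ satisfying $\overline{\overline{\Theta ^{\prime }}\circ \Theta }\circ \iota _{A}=\iota _{A}$ and $\overline{\overline{\Theta ^{\prime }}\circ \Theta }\circ \iota _{G}=\iota _{G}$; but the identity map (viewed via the canonical embedding into the multiplier algebra) has the same property, so by the uniqueness clause in Proposition 5.6 — bijectivity of $\left( \varphi ,u\right) \mapsto \varphi \times u$ — we get $\overline{\Theta ^{\prime }}\circ \Theta =\mathrm{id}$, and symmetrically $\overline{\Theta }\circ \Theta ^{\prime }=\mathrm{id}$. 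Using condition (2) of Definition 5.4, $G\times _{\alpha }A\left[ \tau _{\Gamma }\right] $ is the closed span of the elements $\iota _{A}\left( a\right) \iota _{G}\left( f\right) $; since $\Theta $ sends such a generator to $\iota _{A}^{\prime }\left( a\right) \iota _{G}^{\prime }\left( f\right) \in \left( G\times _{\alpha }A\left[ \tau _{\Gamma }\right] \right) ^{\prime }$ and this set is total in $\left( G\times _{\alpha }A\left[ \tau _{\Gamma }\right] \right) ^{\prime }$, continuity forces $\Theta $ to be a pro-$C^{\ast }$-isomorphism onto $\left( G\times _{\alpha }A\left[ \tau _{\Gamma }\right] \right) ^{\prime }$ intertwining the canonical covariant morphisms, with inverse $\Theta ^{\prime }$.

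The step I expect to be the main obstacle is the bookkeeping around multiplier algebras: verifying that the extensions $\overline{\Theta },\overline{\Theta ^{\prime }}$ compose correctly (i.e.\ $\overline{\overline{\Theta ^{\prime }}\circ \Theta }=\overline{\Theta ^{\prime }}\circ \overline{\Theta }$ on multipliers, which uses nondegeneracy and the uniqueness of the extension of a nondegenerate morphism recorded in Section 2) and that the images actually land inside the crossed products so that one genuinely obtains an isomorphism of pro-$C^{\ast }$-algebras rather than just of their multiplier algebras. Once these compatibilities are in hand, the argument closes formally from the universal property alone, and no further input about strong boundedness of $\alpha $ is needed beyond what guarantees existence of the crossed product in the first place.
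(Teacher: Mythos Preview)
Your proposal is correct and follows essentially the same approach as the paper: apply Proposition 5.6 in both directions to obtain morphisms $\Theta$ and $\Theta'$, use condition (2) of Definition 5.4 to see that each lands in the crossed product rather than just its multiplier algebra, and then check on generators (equivalently, via the uniqueness clause of Proposition 5.6) that the compositions are the identities. The only cosmetic difference is that the paper verifies $\Phi\circ\Psi=\mathrm{id}$ and $\Psi\circ\Phi=\mathrm{id}$ directly on the generating elements $\iota_A(a)\iota_G(f)$, whereas you invoke the bijectivity statement in Proposition 5.6; these amount to the same computation.
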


\begin{proof}
Let $B\left[ \tau _{\Gamma ^{\prime }}\right] $ be a pro-$C^{\ast }$-algebra
and $\left( j_{A},j_{G}\right) $ a covariant pro-$C^{\ast }$-morphism from $%
\left( G,\alpha ,A\left[ \tau _{\Gamma }\right] \right) $ to $B\left[ \tau
_{\Gamma ^{\prime }}\right] $ which satisfy the relations $(1)-(2)$ from
Definition 5.4. Then, by Proposition 5.6, there is a nondegenerate pro-$%
C^{\ast }$-morphism $\Phi :G\times _{\alpha }A\left[ \tau _{\Gamma }\right]
\rightarrow M(B\left[ \tau _{\Gamma ^{\prime }}\right] )$ such that $%
\overline{\Phi }\circ \iota _{A}=j_{A}$ and $\overline{\Phi }\circ \iota
_{G}=j_{G}$. Since $\{\iota _{A}\left( a\right) \iota _{G}\left( f\right) ,$ 
$a\in A,$ $f\in C_{c}\left( G\right) \}$ generates $G\times _{\alpha }A\left[
\tau _{\Gamma }\right] $ and $\{j_{A}\left( a\right) j_{G}\left( f\right) ,$ 
$a\in A,$ $f\in C_{c}\left( G\right) \}$ generates $B\left[ \tau _{\Gamma
^{\prime }}\right] ,$ $\Phi \left( G\times _{\alpha }A\left[ \tau _{\Gamma }%
\right] \right) \subseteq B.$

In the same way, there is a pro-$C^{\ast }$-morphism $\Psi :B\left[ \tau
_{\Gamma ^{\prime }}\right] \rightarrow G\times _{\alpha }A\left[ \tau
_{\Gamma }\right] $ such that $\overline{\Psi }\circ j_{A}=\iota _{A}$ and $%
\overline{\Psi }\circ j_{G}=\iota _{G}$. From these facts and Definition
5.4(2)\textbf{,} we deduce that $\Phi \circ \Psi =$id$_{B}$ and $\Psi \circ
\Phi =$id$_{G\times _{\alpha }A\left[ \tau _{\Gamma }\right] }$, and so $%
\Phi $ is a pro-$C^{\ast }$-isomorphism.
\end{proof}

The following proposition relates the nondegenerate covariant
representations of a pro-$C^{\ast }$-dynamical system $\left( G,\alpha ,A%
\left[ \tau _{\Gamma }\right] \right) $ with the nondegenerate
representations of the full pro-$C^{\ast }$-crossed product of $A\left[ \tau
_{\Gamma }\right] $ by $\alpha $\textbf{.}

\begin{proposition}
Let $\left( G,\alpha ,A\left[ \tau _{\Gamma }\right] \right) $ be a pro-$%
C^{\ast }$-dynamical system such that there is the full pro-$C^{\ast }$%
-crossed product of $A\left[ \tau _{\Gamma }\right] $ by $\alpha $\textbf{. }%
There is a bijective correspondence between nondegenerate covariant
representations of $\left( G,\alpha ,A\left[ \tau _{\Gamma }\right] \right) $
and nondegenerate representations of $G\times _{\alpha }A\left[ \tau
_{\Gamma }\right] $.
\end{proposition}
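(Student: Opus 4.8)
The plan is to exhibit the correspondence explicitly and to check that the two assignments are mutually inverse, using the universal property encoded in Definition 5.4 together with Proposition 5.6. In one direction, given a nondegenerate covariant representation $\left(\varphi,u,\mathcal{H}\right)$ of $\left(G,\alpha,A\left[\tau_{\Gamma}\right]\right)$, Definition 5.4(1) produces a nondegenerate representation $\left(\Phi,\mathcal{H}\right)$ of $G\times_{\alpha}A\left[\tau_{\Gamma}\right]$ with $\overline{\Phi}\circ\iota_{A}=\varphi$ and $\overline{\Phi}\circ\iota_{G}=u$; since the elements $\iota_{A}\left(a\right)\iota_{G}\left(f\right)$, $a\in A$, $f\in C_{c}\left(G\right)$, generate $G\times_{\alpha}A\left[\tau_{\Gamma}\right]$ by Definition 5.4(2), such a $\Phi$ is unique, so the assignment $\left(\varphi,u,\mathcal{H}\right)\mapsto\Phi$ is well defined. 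Equivalently, one may simply invoke Proposition 5.6 with $B\left[\tau_{\Gamma^{\prime}}\right]$ taken to be the (unital) $C^{\ast}$-algebra $L\left(\mathcal{H}\right)$, so that $M\left(L\left(\mathcal{H}\right)\right)=L\left(\mathcal{H}\right)$: a nondegenerate representation of $A\left[\tau_{\Gamma}\right]$, resp. of $G\times_{\alpha}A\left[\tau_{\Gamma}\right]$, on $\mathcal{H}$ is precisely a nondegenerate pro-$C^{\ast}$-morphism into $L\left(\mathcal{H}\right)$, and a nondegenerate covariant representation is precisely a nondegenerate covariant pro-$C^{\ast}$-morphism $\left(\varphi,u\right)$ into $L\left(\mathcal{H}\right)$, so the map $\left(\varphi,u\right)\mapsto\varphi\times u$ of Proposition 5.6 already delivers the desired bijection.

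For the inverse assignment, start from a nondegenerate representation $\left(\Phi,\mathcal{H}\right)$ of $G\times_{\alpha}A\left[\tau_{\Gamma}\right]$. Since $\Phi$ is nondegenerate it extends to a unital pro-$C^{\ast}$-morphism $\overline{\Phi}:M\left(G\times_{\alpha}A\left[\tau_{\Gamma}\right]\right)\rightarrow L\left(\mathcal{H}\right)$, and I set $\varphi=\overline{\Phi}\circ\iota_{A}$ and $u=\overline{\Phi}\circ\iota_{G}$. Then $\varphi$ is a representation of $A\left[\tau_{\Gamma}\right]$ on $\mathcal{H}$, $u$ is a unitary $\ast$-representation of $G$ on $\mathcal{H}$ (strict continuity of $\iota_{G}$ together with strong continuity of $\overline{\Phi}$ on bounded sets gives continuity of $t\mapsto u_{t}\xi$, and unital morphisms carry unitaries to unitaries), and applying $\overline{\Phi}$ to the covariance identity $\iota_{A}\left(\alpha_{t}\left(a\right)\right)=\iota_{G}\left(t\right)\iota_{A}\left(a\right)\iota_{G}\left(t\right)^{\ast}$ yields $\varphi\left(\alpha_{t}\left(a\right)\right)=u_{t}\varphi\left(a\right)u_{t}^{\ast}$. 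The one point that needs a small argument is that $\left(\varphi,\mathcal{H}\right)$ is nondegenerate: by Remark 5.5 the covariant morphism $\left(\iota_{A},\iota_{G}\right)$ is nondegenerate, i.e. $\left[\iota_{A}\left(A\right)\left(G\times_{\alpha}A\left[\tau_{\Gamma}\right]\right)\right]=G\times_{\alpha}A\left[\tau_{\Gamma}\right]$, and since $\overline{\Phi}\left(\iota_{A}\left(a\right)\right)\Phi\left(x\right)=\Phi\left(\iota_{A}\left(a\right)x\right)$, this gives $\left[\varphi\left(A\right)\mathcal{H}\right]\supseteq\left[\Phi\left(\iota_{A}\left(A\right)\left(G\times_{\alpha}A\left[\tau_{\Gamma}\right]\right)\right)\mathcal{H}\right]=\left[\Phi\left(G\times_{\alpha}A\left[\tau_{\Gamma}\right]\right)\mathcal{H}\right]=\mathcal{H}$.

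It then remains to check that the two assignments are mutually inverse. Starting from $\left(\varphi,u,\mathcal{H}\right)$, the representation $\Phi$ obtained in the first direction satisfies $\overline{\Phi}\circ\iota_{A}=\varphi$ and $\overline{\Phi}\circ\iota_{G}=u$ by construction, so feeding $\Phi$ into the inverse assignment returns exactly $\left(\varphi,u,\mathcal{H}\right)$. Conversely, starting from $\left(\Phi,\mathcal{H}\right)$ and forming $\left(\varphi,u,\mathcal{H}\right)=\left(\overline{\Phi}\circ\iota_{A},\overline{\Phi}\circ\iota_{G},\mathcal{H}\right)$, the first assignment produces the unique representation $\Phi^{\prime}$ of $G\times_{\alpha}A\left[\tau_{\Gamma}\right]$ with $\overline{\Phi^{\prime}}\circ\iota_{A}=\varphi$ and $\overline{\Phi^{\prime}}\circ\iota_{G}=u$; since $\Phi$ itself has these two properties, $\Phi^{\prime}=\Phi$. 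The only genuinely non-formal step in the whole argument is the nondegeneracy verification in the inverse direction, which is exactly what Remark 5.5 is for; everything else is a direct application of Definition 5.4 and Proposition 5.6, so I do not expect a serious obstacle beyond careful bookkeeping of the extensions to the multiplier algebras.
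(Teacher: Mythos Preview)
Your proof is correct and follows essentially the same route as the paper: use Definition 5.4 to go from $\left(\varphi,u,\mathcal{H}\right)$ to $\varphi\times u$, and use $\overline{\Phi}\circ\iota_{A}$, $\overline{\Phi}\circ\iota_{G}$ to go back, then check the two assignments are mutual inverses via the generating set in Definition 5.4(2). The only noticeable difference is in the nondegeneracy check for $\varphi=\overline{\Phi}\circ\iota_{A}$: the paper argues via an approximate unit $\{e_{i}\}$ of $A\left[\tau_{\Gamma}\right]$ (since $\iota_{A}$ and $\Phi$ are nondegenerate, $\overline{\Phi}\left(\iota_{A}\left(e_{i}\right)\right)\to\mathrm{id}_{\mathcal{H}}$ strictly), whereas you use Remark 5.5 directly via the inclusion $\left[\varphi\left(A\right)\mathcal{H}\right]\supseteq\left[\Phi\left(G\times_{\alpha}A\left[\tau_{\Gamma}\right]\right)\mathcal{H}\right]=\mathcal{H}$; both arguments are equivalent. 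Your observation that the whole statement is the special case $B\left[\tau_{\Gamma^{\prime}}\right]=L\left(\mathcal{H}\right)$ of Proposition 5.6 is a clean shortcut the paper does not spell out.
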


\begin{proof}
Let $\left( \varphi ,u,\mathcal{H}\right) $ be a nondegenerate covariant
representation of $\left( G,\alpha ,A\left[ \tau _{\Gamma }\right] \right) $%
. Then, by Definition 5.4, there is a representation $\left( \varphi \times
u,\mathcal{H}\right) $ such that $\overline{\varphi \times u}\circ \iota
_{A}=\varphi \ $and $\overline{\varphi \times u}\circ \iota _{G}=u$.
Moreover, by Definition 5.4(2)\textbf{,} $\left( \varphi \times u,\mathcal{H}%
\right) $ is unique, and since $\varphi $ is nondegenerate, it is
nondegenerate too.

Let $\left( \Phi ,\mathcal{H}\right) $ be a nondegenerate representation of $%
G\times _{\alpha }A\left[ \tau _{\Gamma }\right] $. Then $(\overline{\Phi }%
\circ \iota _{A},$ $\overline{\Phi }\circ \iota _{G},\mathcal{H)}$ is a
covariant representation of $\left( G,\alpha ,A\left[ \tau _{\Gamma }\right]
\right) $, and moreover, $\left( \overline{\Phi }\circ \iota _{A}\right)
\times \left( \overline{\Phi }\circ \iota _{G}\right) =\Phi $. Since $\iota
_{A}$ and $\Phi $ are nondegenerate, the net $\{\overline{\Phi }\left( \iota
_{A}\left( e_{i}\right) \right) \}_{i}$, where $\{e_{i}\}_{i}$ is an
approximate unit of $A\left[ \tau _{\Gamma }\right] $, converges strictly to
id$_{\mathcal{H}}$, and so $\overline{\Phi }\circ \iota _{A}$ is
nondegenerate.

Suppose that there is another nondegenerate covariant representation $\left(
\varphi ,u,\mathcal{H}\right) $ of $\left( G,\alpha ,A\left[ \tau _{\Gamma }%
\right] \right) $ such that $\varphi \times u=\Phi $. Then $\varphi =%
\overline{\varphi \times u}\circ \iota _{A}=\overline{\Phi }\circ \iota _{A}$
and $u=\overline{\varphi \times }u\circ \iota _{G}=\overline{\Phi }\circ
\iota _{G}$. Therefore, the map $\left( \varphi ,u,\mathcal{H}\right)
\mapsto \left( \varphi \times u,\mathcal{H}\right) $ is bijective.
\end{proof}

\begin{theorem}
Let $\left( G,\alpha ,A\left[ \tau _{\Gamma }\right] \right) $ be a pro-$%
C^{\ast }$-dynamical system such that $\alpha $ is strongly bounded. Then,
there is the full pro-$C^{\ast }$-crossed product of $A\left[ \tau _{\Gamma }%
\right] $ by $\alpha $.
\end{theorem}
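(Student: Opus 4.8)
There are two natural routes. The quickest is to observe, via Remark 3.6, that $\alpha $ is an inverse limit action of $G$ on the isomorphic pro-$C^{\ast }$-algebra $A[\tau _{\Gamma }^{G}]$, so that the crossed product constructed for inverse limit actions in \cite{J2, J3} serves as $G\times _{\alpha }A[\tau _{\Gamma }]$ --- provided one checks that the universal property established there coincides with conditions $(1)-(2)$ of Definition 5.4 and that a covariant representation is insensitive to the passage from $\Gamma $ to $\Gamma ^{G}$. I will instead sketch a self-contained construction, which also makes the role of strong boundedness transparent. On $C_{c}(G,A)$, equipped with the convolution $(f\ast g)(t)=\int _{G}f(s)\alpha _{s}(g(s^{-1}t))\,ds$ and the involution $f^{\ast }(t)=\Delta (t)^{-1}\alpha _{t}(f(t^{-1})^{\ast })$, put for each $\lambda \in \Lambda $
\begin{equation*}
N^{\lambda }(f)=\sup \left\{ \left\Vert (\varphi \times u)(f)\right\Vert :(\varphi ,u,\mathcal{H})\in \mathcal{R}_{\lambda }\left( G,\alpha ,A[\tau _{\Gamma }]\right) \right\} ,\qquad (\varphi \times u)(f)=\int _{G}\varphi (f(t))u_{t}\,dt.
\end{equation*}
By Proposition 4.4 there are nondegenerate covariant representations, so this is not vacuous; for $(\varphi ,u,\mathcal{H})\in \mathcal{R}_{\lambda }$ the bound $\left\Vert \varphi (a)\right\Vert \leq p_{\lambda }(a)$ gives $N^{\lambda }(f)\leq \int _{G}p_{\lambda }(f(t))\,dt=N_{p_{\lambda }}(f)<\infty $, so each $N^{\lambda }$ is finite. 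Each $f\mapsto \left\Vert (\varphi \times u)(f)\right\Vert $ is a $C^{\ast }$-seminorm, and a finite supremum of $C^{\ast }$-seminorms is again a $C^{\ast }$-seminorm since $x\mapsto x^{2}$ is increasing; moreover $\{N^{\lambda }\}_{\lambda \in \Lambda }$ is directed, because $p_{\lambda }\leq p_{\mu }$ forces $\mathcal{R}_{\lambda }\subseteq \mathcal{R}_{\mu }$ and hence $N^{\lambda }\leq N^{\mu }$. I would then define $G\times _{\alpha }A[\tau _{\Gamma }]$ to be the Hausdorff completion of $C_{c}(G,A)$ for the topology given by $\{N^{\lambda }\}_{\lambda }$; this is a pro-$C^{\ast }$-algebra.

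Next I would exhibit the covariant pro-$C^{\ast }$-morphism $(\iota _{A},\iota _{G})$. Set $(\iota _{A}(a)f)(s)=af(s)$ and $(\iota _{G}(t)f)(s)=\alpha _{t}(f(t^{-1}s))$. The identities $(\varphi \times u)(\iota _{A}(a)f)=\varphi (a)(\varphi \times u)(f)$ and $(\varphi \times u)(\iota _{G}(t)f)=u_{t}(\varphi \times u)(f)$ show that $\iota _{A}(a)$ extends to a multiplier of $G\times _{\alpha }A[\tau _{\Gamma }]$ with $\lambda $-seminorm at most $p_{\lambda }(a)$, so $\iota _{A}$ is a pro-$C^{\ast }$-morphism into $M(G\times _{\alpha }A[\tau _{\Gamma }])$, and that $\iota _{G}(t)$ is a unitary multiplier with $N^{\lambda }(\iota _{G}(t)f)=N^{\lambda }(f)$; strict continuity of $\iota _{G}$ follows from $N^{\lambda }\leq N_{p_{\lambda }}$ together with continuity of translation in the $L^{1}$-type seminorms, and the covariance relation $\iota _{A}(\alpha _{t}(a))=\iota _{G}(t)\iota _{A}(a)\iota _{G}(t)^{\ast }$ is a direct computation on $C_{c}(G,A)$.

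It remains to check the two defining properties. For (1): a nondegenerate covariant representation $(\varphi ,u,\mathcal{H})$ lies in some $\mathcal{R}_{\lambda }$, so $\varphi \times u$ is a nondegenerate $\ast $-representation of $C_{c}(G,A)$ bounded by $N^{\lambda }$, hence extends to a nondegenerate representation $\Phi $ of $G\times _{\alpha }A[\tau _{\Gamma }]$; from $\Phi (\iota _{A}(a)b)=\varphi (a)\Phi (b)$ and $\Phi (\iota _{G}(t)b)=u_{t}\Phi (b)$ one reads off $\overline{\Phi }\circ \iota _{A}=\varphi $ and $\overline{\Phi }\circ \iota _{G}=u$. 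For (2): writing $\iota _{G}(f)=\int _{G}f(s)\iota _{G}(s)\,ds$ for $f\in C_{c}(G)$, the product $\iota _{A}(a)\iota _{G}(f)$ is the element $s\mapsto af(s)$ of $C_{c}(G,A)$, and finite sums of such elements are dense in $C_{c}(G,A)$, hence in $G\times _{\alpha }A[\tau _{\Gamma }]$; so the closed span of the $\iota _{A}(a)\iota _{G}(f)$ is everything.

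The step I expect to require the most care is the bookkeeping around the family $\{N^{\lambda }\}$: one must simultaneously know it is finite (the $L^{1}$-bound), directed, mutually compatible enough for the $L(\mathcal{H})$-valued integrals $\int _{G}f(s)\iota _{G}(s)\,ds$ to make sense, and rich enough that $\iota _{A}$ is injective --- and this last point, which reduces through Proposition 4.4 and Remark 4.5 to the fact that covariant representations separate the points of $A$, is exactly where strong boundedness is indispensable. Checking that $\iota _{A}(a)$ is a genuine two-sided multiplier of the completion, i.e.\ that $\iota _{G}(f)\iota _{A}(a)$ again lies in the closed span of the generators $\iota _{A}(b)\iota _{G}(g)$, is the other point requiring an argument rather than a computation.
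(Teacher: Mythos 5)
Your construction is correct, but it takes a genuinely different route from the paper's, so a comparison is worthwhile. The paper proceeds spatially: for each $\lambda $ it forms the direct sum $H_{\lambda }$ of all classes in $\mathcal{R}_{\lambda }\left( G,\alpha ,A\left[ \tau _{\Gamma }\right] \right) $, assembles the locally Hilbert space $\mathcal{H}=\lim_{\lambda \rightarrow }\oplus _{\mu \leq \lambda }H_{\mu }$ (Theorem 5.2), and defines $G\times _{\alpha }A\left[ \tau _{\Gamma }\right] $ as the closed span of $\{i_{A}(a)i_{G}(f)\}$ inside $L(\mathcal{H})$; the bulk of that proof is spent showing this closed subspace is a $\ast $-subalgebra, via an explicit approximation of $s\mapsto \pi _{\lambda }^{A}(f(s)\alpha _{s}(a))$ by nets $\sum_{j}\pi _{\lambda }^{A}(a_{j})\otimes f_{j}$ in $A_{\lambda }\otimes _{\mathrm{alg}}C_{c}(G)$, and then cutting the big representation down to recover an arbitrary $(\psi ,v,H_{\psi ,v})$. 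You instead take the enveloping route: complete the convolution $\ast $-algebra $C_{c}(G,A)$ with respect to the universal seminorms $N^{\lambda }$. This sidesteps the subalgebra issue entirely (a Hausdorff completion of a $\ast $-algebra under a directed family of $C^{\ast }$-seminorms is automatically a pro-$C^{\ast }$-algebra) and isolates exactly where strong boundedness enters: finiteness of $N^{\lambda }$ via $N^{\lambda }\leq N_{p_{\lambda }}$, and nontriviality via Proposition 4.4 and Remark 4.5. Note that the two constructions yield literally the same seminorms on the generators, since $p_{\lambda ,L(\mathcal{H})}\left( i_{A}(a)i_{G}(f)\right) =\sup_{\mu \leq \lambda }\left\Vert \left( \varphi ^{\mu }\times u^{\mu }\right) (af)\right\Vert =N^{\lambda }(af)$ by $\mathcal{R}_{\mu }\subseteq \mathcal{R}_{\lambda }$ for $\mu \leq \lambda $, so the objects agree on the nose and not merely via Corollary 5.7. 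Two points of care. First, your phrase "a finite supremum of $C^{\ast }$-seminorms" should read "a pointwise-finite supremum of a possibly infinite family of $C^{\ast }$-seminorms", which is indeed a $C^{\ast }$-seminorm because $\sup_{i}q_{i}(a^{\ast }a)=\sup_{i}q_{i}(a)^{2}=(\sup_{i}q_{i}(a))^{2}$. Second, the approximation argument has not disappeared from your proof, only relocated: showing that $\iota _{G}(f)\iota _{A}(a)$, which corresponds to the function $s\mapsto f(s)\alpha _{s}(a)$, lies in the closed span of the elementary tensors $s\mapsto bg(s)$ is exactly the computation the paper carries out, and you are right to flag it as the one step needing a genuine argument. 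Your first suggested route --- pass to $\Gamma ^{G}$ via Remark 3.6 and quote the inverse-limit case --- also works, and is in effect ratified by the corollary at the end of Section 5 asserting that $G\times _{\alpha }A\left[ \tau _{\Gamma }\right] $ and $G\times _{\alpha }A\left[ \tau _{\Gamma ^{G}}\right] $ are isomorphic.
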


\begin{proof}
By Theorem 5.2\textbf{,} there is a locally Hilbert space $\mathcal{H}$ and
a covariant pro-$C^{\ast }$-morphism $\left( i_{A},i_{G}\right) $ from $%
A[\tau _{\Gamma }]$ to $L(\mathcal{H})$.

Let $B=\overline{\text{span}\{i_{A}\left( a\right) i_{G}\left( f\right)
;a\in A,f\in C_{c}\left( G\right) \}}\subseteq L(\mathcal{H})$. To show that 
$B$ is a pro-$C^{\ast }$-algebra, we must show that $B$ is closed under
taking adjoints and multiplication. For this, since $B=\lim\limits_{%
\longleftarrow \lambda }\overline{\pi _{\lambda }^{\mathcal{H}}\left(
B\right) }$ (\cite[Chapter III, Theorem 3.1]{M}), it is sufficient to show
that for each $p_{\lambda }\in \Gamma $, $\pi _{\lambda }^{\mathcal{H}%
}\left( i_{G}\left( f\right) i_{A}\left( a\right) \right) $ and $\pi
_{\lambda }^{\mathcal{H}}\left( i_{A}\left( b\right) i_{G}\left( f\right)
i_{A}\left( a\right) i_{G}(h\right) )$ are elements in the closure of $\pi
_{\lambda }^{\mathcal{H}}\left( B\right) $ in $L\left( \mathcal{H}_{\lambda
}\right) $ for all $a,b\in A\ $and for all $f,h\in C_{c}(G)$.

The map $s\rightarrow \pi _{\lambda }^{A}\left( f\left( s\right) \alpha
_{s}(a)\right) $ from $G$ to $A_{\lambda }$ defines an element in $%
C_{c}(G,A_{\lambda })$, and so there is a net $\{\pi _{\lambda
}^{A}(a_{j})\otimes f_{j}\}_{j\in J}$ in $A_{\lambda }\otimes _{\text{alg}%
}C_{c}(G)$ with \textbf{\ }supp$f_{j},\ $supp$f\subseteq K$ for some compact
subset $K$, which converges uniformly to this map.

By \cite[Lemma 3.7]{J4},%
\begin{eqnarray*}
\pi _{\lambda }^{\mathcal{H}}\left( i_{G}\left( f\right) i_{A}\left(
a\right) \right) &=&\tint\limits_{G}f\left( s\right) i^{\lambda }\left(
s\right) ds\pi _{\lambda }^{\mathcal{H}}\left( i_{A}(a)\right)
=\tint\limits_{G}f\left( s\right) \pi _{\lambda }^{\mathcal{H}}\left(
i_{G}(s)i_{A}(a)\right) ds \\
&=&\tint\limits_{G}f\left( s\right) \pi _{\lambda }^{\mathcal{H}}\left(
i_{A}(\alpha _{s}\left( a\right) )i_{G}(s)\right) ds \\
&=&\tint\limits_{G}f\left( s\right) i_{A}^{\lambda }(\alpha
_{s}(a))i_{G}^{\lambda }(s)ds
\end{eqnarray*}%
and%
\begin{equation*}
\pi _{\lambda }^{\mathcal{H}}\left( i_{A}\left( a_{j}\right) i_{G}\left(
f_{j}\right) \right) =\pi _{\lambda }^{\mathcal{H}}\left( i_{A}\left(
a_{j}\right) \right) \tint\limits_{G}f_{j}\left( s\right) i_{G}^{\lambda
}(s)ds=\tint\limits_{G}i_{A}^{\lambda }(a_{j})f_{j}\left( s\right)
i_{G}^{\lambda }(s)ds
\end{equation*}%
for each $j\in J$. Then, we have 
\begin{eqnarray*}
&&\left\Vert \pi _{\lambda }^{\mathcal{H}}\left( i_{G}\left( f\right)
i_{A}\left( a\right) \right) -\pi _{\lambda }^{\mathcal{H}}\left(
i_{A}\left( a_{j}\right) i_{G}\left( f_{j}\right) \right) \right\Vert
_{L\left( \mathcal{H}_{\lambda }\right) } \\
&\leq &\tint\limits_{G}\left\Vert f\left( s\right) i_{A}^{\lambda }(\alpha
_{s}(a))i_{G}^{\lambda }(s)-i_{A}^{\lambda }(a_{j})f_{j}\left( s\right)
i_{G}^{\lambda }(s)\right\Vert _{L\left( \mathcal{H}_{\lambda }\right) }ds \\
&\leq &M\sup \{\left\Vert f\left( s\right) i_{A}^{\lambda }(\alpha
_{s}(a))i_{G}^{\lambda }(s)-i_{A}^{\lambda }(a_{j})f_{j}\left( s\right)
i_{G}^{\lambda }(s)\right\Vert _{L\left( \mathcal{H}_{\lambda }\right)
},s\in K\} \\
&=&M\sup \{\left\Vert i_{A}^{\lambda }(f\left( s\right) \alpha
_{s}(a)-f_{j}\left( s\right) a_{j})\right\Vert _{L\left( \mathcal{H}%
_{\lambda }\right) }\left\Vert i_{G}^{\lambda }(s)\right\Vert _{L\left( 
\mathcal{H}_{\lambda }\right) },s\in K\} \\
&\leq &M\sup \{p_{\lambda }(f\left( s\right) \alpha _{s}(a)-f_{j}\left(
s\right) a_{j}),s\in K\} \\
&=&M\sup \{\left\Vert \pi _{\lambda }^{A}(f\left( s\right) \alpha
_{s}(a))-f_{j}\left( s\right) \pi _{\lambda }^{A}\left( a_{j}\right)
\right\Vert _{A_{\lambda }},s\in K\}
\end{eqnarray*}%
for all $j\in J$, where $M=\tint\limits_{K}dg$, and so $\pi _{\lambda }^{%
\mathcal{H}}\left( i_{G}\left( f\right) i_{A}\left( a\right) \right) \in 
\overline{\pi _{\lambda }^{\mathcal{H}}(B)}$.

On the other hand, 
\begin{eqnarray*}
&&\left\Vert \pi _{\lambda }^{\mathcal{H}}\left( i_{A}\left( b\right)
i_{G}\left( f\right) i_{A}\left( a\right) i_{G}(h)\right) -\pi _{\lambda }^{%
\mathcal{H}}\left( i_{A}\left( ba_{j}\right) i_{G}\left( f_{j}\ast h\right)
\right) \right\Vert _{L\left( \mathcal{H}_{\lambda }\right) } \\
&=&\left\Vert \pi _{\lambda }^{\mathcal{H}}\left( i_{A}\left( b\right)
i_{G}\left( f\right) i_{A}\left( a\right) i_{G}(h)-i_{A}(b)i_{A}\left(
a_{j}\right) i_{G}\left( f_{j}\right) i_{G}(h)\right) \right\Vert _{L\left( 
\mathcal{H}_{\lambda }\right) } \\
&\leq &\left\Vert \pi _{\lambda }^{\mathcal{H}}\left( i_{A}\left( b\right)
\right) \pi _{\lambda }^{\mathcal{H}}\left( i_{G}\left( f\right) i_{A}\left(
a\right) -i_{A}\left( a_{j}\right) i_{G}\left( f_{j}\right) \right) \pi
_{\lambda }^{\mathcal{H}}\left( i_{G}\left( h\right) \right) \right\Vert
_{L\left( \mathcal{H}_{\lambda }\right) } \\
&\leq &\left\Vert \pi _{\lambda }^{\mathcal{H}}\left( i_{A}\left( b\right)
\right) \right\Vert _{L\left( \mathcal{H}_{\lambda }\right) }\left\Vert \pi
_{\lambda }^{\mathcal{H}}\left( i_{G}\left( h\right) \right) \right\Vert
_{L\left( \mathcal{H}_{\lambda }\right) }\left\Vert \pi _{\lambda }^{%
\mathcal{H}}\left( i_{G}\left( f\right) i_{A}\left( a\right) \right) -\pi
_{\lambda }^{\mathcal{H}}\left( i_{A}\left( a_{j}\right) i_{G}\left(
f_{j}\right) \right) \right\Vert _{L\left( \mathcal{H}_{\lambda }\right) }
\end{eqnarray*}%
whence, we deduce that $\pi _{\lambda }^{\mathcal{H}}\left( i_{A}\left(
b\right) i_{G}\left( f\right) i_{A}\left( a\right) i_{G}(h)\right) \in 
\overline{\pi _{\lambda }^{\mathcal{H}}(B)}$. Thus, we showed that $\pi
_{\lambda }^{\mathcal{H}}\left( i_{G}\left( f\right) i_{A}\left( a\right)
\right) ,\pi _{\lambda }^{\mathcal{H}}\left( i_{A}\left( b\right)
i_{G}\left( f\right) i_{A}\left( a\right) i_{G}(h)\right) \in \overline{\pi
_{\lambda }^{\mathcal{H}}(B)}$ for each $\lambda \in \Lambda $, and so $%
i_{G}\left( f\right) i_{A}\left( a\right) $,$\ i_{A}\left( b\right)
i_{G}\left( f\right) i_{A}\left( a\right) i_{G}(h)\in B$. Therefore, $B$ is
a pro-$C^{\ast }$-algebra.

In the same manner, we show that for each $a\in A,i_{A}\left( a\right)
i_{A}\left( b\right) i_{G}(f)\in B$ and $i_{A}\left( b\right)
i_{G}(f)i_{A}\left( a\right) \in B$ for all $b\in A$ and for all $f\in
C_{c}(G)$, and so $i_{A}\left( a\right) \in M(B)$.

From,%
\begin{equation*}
i_{G}(t)i_{A}\left( a\right) i_{G}(f)=\tint\limits_{G}f(s)i_{A}\left( \alpha
_{t}(a)\right) i_{G}(ts)ds\in B
\end{equation*}%
and%
\begin{equation*}
i_{G}(f)i_{A}\left( a\right) i_{G}(t)=\tint\limits_{G}f(s)i_{A}\left( \alpha
_{s}\left( a\right) \right) i_{G}(st)ds\in B
\end{equation*}%
for all $a\in A$, for all $f\in C_{c}(G)$ and for all $t\in G$, we deduce
that $i_{G}(t)\in M(B)$ for all $t\in G$.

Let $\left( \psi ,v,H_{\psi ,v}\right) $ be a nondegenerate covariant
representation of $\left( G,\alpha ,A\left[ \tau _{\Gamma }\right] \right) $%
. Then there is $\left( \varphi ,u,H_{\varphi ,u}\right) \in \mathcal{R}%
_{\lambda }\left( G,\alpha ,A\left[ \tau _{\Gamma }\right] \right) $ such
that $\left( \psi ,v,H_{\psi ,v}\right) $ and $\left( \varphi ,u,H_{\varphi
,u}\right) $ are unitarily equivalent. So there is a unitary operator $U:$ $%
H_{\psi ,v}\rightarrow H_{\varphi ,u}$ such that $\psi \left( a\right)
=U^{\ast }\varphi (a)U$ for all $a\in A$\ and $v_{t}=U^{\ast }u_{t}U$ for
all $t\in G$. The map $\Psi :L(\mathcal{H})\rightarrow L(H_{\lambda })$
given by 
\begin{equation*}
\Psi \left( T\right) =\pi _{\lambda }^{\mathcal{H}}\left( T\right)
|_{H_{\lambda }}
\end{equation*}%
is a representation of $L(\mathcal{H})$ on $H_{\lambda }$ (see the proof of
Theorem 5.2). From 
\begin{equation*}
\Psi \left( i_{A}(a)\right) \left( H_{\varphi ,u}\right) =i_{A}^{\lambda
}(a)\left( H_{\varphi ,u}\right) \subseteq H_{\varphi ,u}
\end{equation*}%
for all $a\in A$ and 
\begin{equation*}
\Psi \left( i_{G}(t)\right) \left( H_{\varphi ,u}\right) =i_{G}^{\lambda
}(t)\left( H_{\varphi ,u}\right) \subseteq H_{\varphi ,u}
\end{equation*}%
for all $t\in G$, and taking into account that $B$ is generated by $%
\{i_{A}\left( a\right) i_{G}\left( f\right) ;a\in A,f\in C_{c}\left(
G\right) \}$, we deduce that $\Psi \left( B\right) \left( H_{\varphi
,u}\right) \subseteq H_{\varphi ,u}$. Let $\Phi :B\rightarrow $ $L(H_{\psi
,v})$ given by 
\begin{equation*}
\Phi \left( b\right) =U^{\ast }\Psi \left( b\right) |_{H_{\varphi ,u}}U.
\end{equation*}%
Clearly, $\Phi $ is a nondegenerate representation of $B$ on $H_{\psi ,v}$, 
\begin{equation*}
\overline{\Phi }\left( i_{A}(a)\right) =U^{\ast }\Psi \left( i_{A}(a)\right)
|_{H_{\varphi ,u}}U=U^{\ast }i_{A}^{\lambda }(a)|_{H_{\varphi ,u}}U=U^{\ast
}\varphi (a)U=\psi \left( a\right)
\end{equation*}%
for all $a\in A$, and 
\begin{equation*}
\overline{\Phi }\left( i_{G}(t)\right) =U^{\ast }\Psi \left( i_{G}(t)\right)
|_{H_{\varphi ,u}}U=U^{\ast }i_{G}^{\lambda }(t)|_{H_{\varphi ,u}}U=U^{\ast
}u_{t}U=v_{t}
\end{equation*}%
for all $t\in G$.
\end{proof}

\begin{remark}
The index of the family of seminorms which gives the topology on the full
pro-$C^{\ast }$-crossed product of $A\left[ \tau _{\Gamma }\right] $ by $%
\alpha $ is the same with the index of the family of seminorms which gives
the topology on $A\left[ \tau _{\Gamma }\right] .$
\end{remark}

\begin{proposition}
Let $\left( G,\alpha ,A\left[ \tau _{\Gamma }\right] \right) $ be a pro-$%
C^{\ast }$-dynamical system such that $\alpha $ is an inverse limit action.%
\textbf{\ }Then for each $\lambda \in \Lambda $, the $C^{\ast }$-algebra $%
\left( G\times _{\alpha }A\left[ \tau _{\Gamma }\right] \right) _{\lambda }$
is isomorphic to the full $C^{\ast }$-crossed product of $A_{\lambda }$ by $%
\alpha ^{\lambda }$.
\end{proposition}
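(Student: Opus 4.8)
The plan is to read off the claim from the concrete model of $G\times_\alpha A\left[\tau_\Gamma\right]$ produced in Theorem 5.2 and Theorem 5.9. Recall that there $G\times_\alpha A\left[\tau_\Gamma\right]$ is realized as $B=\overline{\text{span}\{i_A(a)i_G(f);a\in A,f\in C_c(G)\}}\subseteq L(\mathcal H)$, where $\mathcal H=\lim_{\lambda\rightarrow}\mathcal H_\lambda$ with $\mathcal H_\lambda=\oplus_{\mu\leq\lambda}H_\mu$, each $H_\mu$ carrying the direct sum $\left(\varphi^\mu,u^\mu\right)$ of a representative set of the classes in $\mathcal R_\mu\left(G,\alpha,A\left[\tau_\Gamma\right]\right)$; moreover $\pi_\mu^{\mathcal H}\circ i_A=i_A^\mu=\oplus_{\nu\leq\mu}\varphi^\nu$ and $\pi_\mu^{\mathcal H}\circ i_G(t)=i_G^\mu(t)=\oplus_{\nu\leq\mu}u^\nu(t)$, with $\left\Vert i_A^\mu(a)\right\Vert\leq p_\mu(a)$. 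By Remark 5.10 the $\lambda$-th seminorm on the crossed product is $p_{\lambda,L(\mathcal H)}|_B$, so $\left(G\times_\alpha A\left[\tau_\Gamma\right]\right)_\lambda$ is the $C^\ast$-algebra $\overline{\pi_\lambda^{\mathcal H}(B)}\subseteq L(\mathcal H_\lambda)$.

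Fix $\lambda\in\Lambda$. Since $\left\Vert i_A^\lambda(a)\right\Vert\leq p_\lambda(a)$, the representation $i_A^\lambda$ of $A\left[\tau_\Gamma\right]$ factors through $\pi_\lambda^A$, giving a nondegenerate representation $\widetilde{i}_A^\lambda:A_\lambda\rightarrow L(\mathcal H_\lambda)$ with $\widetilde{i}_A^\lambda\circ\pi_\lambda^A=i_A^\lambda$. As $\alpha$ is an inverse limit action, Remark 3.2(1) provides an action $\alpha^\lambda$ of $G$ on $A_\lambda$ with $\alpha_t^\lambda\circ\pi_\lambda^A=\pi_\lambda^A\circ\alpha_t$, and the covariance identity $i_A^\lambda\left(\alpha_t(a)\right)=i_G^\lambda(t)i_A^\lambda(a)i_G^\lambda(t)^\ast$ descends to $\widetilde{i}_A^\lambda\left(\alpha_t^\lambda(x)\right)=i_G^\lambda(t)\widetilde{i}_A^\lambda(x)i_G^\lambda(t)^\ast$ for all $x\in A_\lambda$. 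Hence $\left(\widetilde{i}_A^\lambda,i_G^\lambda,\mathcal H_\lambda\right)$ is a nondegenerate covariant representation of $\left(G,\alpha^\lambda,A_\lambda\right)$, so its integrated form $\widetilde{i}_A^\lambda\times i_G^\lambda$ is a nondegenerate representation of the $C^\ast$-crossed product $G\times_{\alpha^\lambda}A_\lambda$. Because $\pi_\lambda^{\mathcal H}\left(i_A(a)i_G(f)\right)=\widetilde{i}_A^\lambda\left(\pi_\lambda^A(a)\right)\int_G f(s)i_G^\lambda(s)\,ds$ and $\pi_\lambda^A$ is onto, the range of $\widetilde{i}_A^\lambda\times i_G^\lambda$ is exactly $\overline{\pi_\lambda^{\mathcal H}(B)}=\left(G\times_\alpha A\left[\tau_\Gamma\right]\right)_\lambda$; thus $\left(G\times_\alpha A\left[\tau_\Gamma\right]\right)_\lambda\cong\left(G\times_{\alpha^\lambda}A_\lambda\right)/\ker\left(\widetilde{i}_A^\lambda\times i_G^\lambda\right)$, and it suffices to prove that $\widetilde{i}_A^\lambda\times i_G^\lambda$ is faithful.

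For faithfulness I would restrict to the top summand $H_\lambda$ of $\mathcal H_\lambda=\oplus_{\mu\leq\lambda}H_\mu$, which reduces $\widetilde{i}_A^\lambda$ and $i_G^\lambda$ since these operators are block diagonal. The restriction of $\widetilde{i}_A^\lambda\times i_G^\lambda$ to $H_\lambda$ is the integrated form of the covariant representation $\left(\psi_0,v_0\right)$ of $\left(G,\alpha^\lambda,A_\lambda\right)$ determined by $\psi_0\circ\pi_\lambda^A=\varphi^\lambda$ and $v_0=u^\lambda$. By Remark 4.6 the map $\left(\psi,v,\mathcal K\right)\mapsto\left(\psi\circ\pi_\lambda^A,v,\mathcal K\right)$ is a bijection of $\mathcal R\left(G,\alpha^\lambda,A_\lambda\right)$ onto $\mathcal R_\lambda\left(G,\alpha,A\left[\tau_\Gamma\right]\right)$, so $\left(\varphi^\lambda,u^\lambda\right)$ being the direct sum over all of $\mathcal R_\lambda\left(G,\alpha,A\left[\tau_\Gamma\right]\right)$ forces $\left(\psi_0,v_0\right)$ to be the direct sum over all of $\mathcal R\left(G,\alpha^\lambda,A_\lambda\right)$, whence this restriction is unitarily equivalent to $\bigoplus_{\left(\psi,v\right)\in\mathcal R\left(G,\alpha^\lambda,A_\lambda\right)}\left(\psi\times v\right)$. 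Since forming integrated forms is a bijection between covariant representations of $\left(G,\alpha^\lambda,A_\lambda\right)$ and representations of $G\times_{\alpha^\lambda}A_\lambda$, this is (equivalent to) the universal representation of $G\times_{\alpha^\lambda}A_\lambda$, hence faithful; a representation admitting a faithful subrepresentation is faithful, so $\widetilde{i}_A^\lambda\times i_G^\lambda$ is faithful, which finishes the proof.

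I expect the faithfulness step to be the main obstacle: one must verify carefully that forming integrated forms is compatible both with the Remark 4.6 correspondence and with direct sums, and invoke the standard fact that a representative set of the nondegenerate covariant representations of $\left(G,\alpha^\lambda,A_\lambda\right)$ already yields a faithful (essentially the universal) representation of $G\times_{\alpha^\lambda}A_\lambda$. The remaining points — that $i_A^\lambda$ factors through $\pi_\lambda^A$ and that the covariance relation descends to $A_\lambda$ — are routine once $\alpha_t^\lambda\circ\pi_\lambda^A=\pi_\lambda^A\circ\alpha_t$ is available.
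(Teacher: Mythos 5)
Your argument is correct, but it is not the route the paper takes. The paper's proof is a pure universal-property argument: using Theorem 5.2, Proposition 5.3 and Corollary 5.7 it produces a covariant $C^{\ast }$-morphism $\left( i_{A_{\lambda }},\overline{\pi _{\lambda }^{G\times _{\alpha }A\left[ \tau _{\Gamma }\right] }}\circ i_{G}\right) $ from $\left( G,\alpha ^{\lambda },A_{\lambda }\right) $ into $\left( G\times _{\alpha }A\left[ \tau _{\Gamma }\right] \right) _{\lambda }$, checks the density condition of Definition 5.4(2), lifts an arbitrary nondegenerate covariant representation of $\left( G,\alpha ^{\lambda },A_{\lambda }\right) $ to one of $\left( G,\alpha ,A\left[ \tau _{\Gamma }\right] \right) $ via $\varphi \circ \pi _{\lambda }^{A}$, uses the norm estimate $\left\Vert \Phi \left( b\right) \right\Vert \leq p_{\lambda ,G\times _{\alpha }A\left[ \tau _{\Gamma }\right] }\left( b\right) $ from the proof of Theorem 5.9 to factor the resulting representation through $\left( G\times _{\alpha }A\left[ \tau _{\Gamma }\right] \right) _{\lambda }$, and then concludes by the uniqueness statement of Corollary 5.7. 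You instead work inside the concrete spatial model $B\subseteq L(\mathcal{H})$ of Theorem 5.9: you build the canonical surjection $\widetilde{i}_{A}^{\lambda }\times i_{G}^{\lambda }:G\times _{\alpha ^{\lambda }}A_{\lambda }\rightarrow \overline{\pi _{\lambda }^{\mathcal{H}}(B)}$ and prove injectivity by restricting to the invariant summand $H_{\lambda }$, where Remark 4.6 identifies the restricted covariant pair with the direct sum over all of $\mathcal{R}\left( G,\alpha ^{\lambda },A_{\lambda }\right) $, i.e.\ with a universal (hence faithful) representation of $G\times _{\alpha ^{\lambda }}A_{\lambda }$. Both proofs ultimately rest on the same two ingredients --- the Remark 4.6 correspondence between $\mathcal{R}\left( G,\alpha ^{\lambda },A_{\lambda }\right) $ and $\mathcal{R}_{\lambda }\left( G,\alpha ,A\left[ \tau _{\Gamma }\right] \right) $, and universality of the crossed product --- but they package them differently: the paper's version stays entirely within its own axiomatics (Definition 5.4 and Corollary 5.7 applied to the $C^{\ast }$-dynamical system $\left( G,\alpha ^{\lambda },A_{\lambda }\right) $) and never needs to discuss faithfulness, while yours makes the isomorphism explicit as an integrated form at the cost of importing the standard $C^{\ast }$-crossed-product facts you flag (compatibility of integrated forms with direct sums and with the Remark 4.6 bijection, and that a set of representatives of all nondegenerate covariant representations attains the universal norm). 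Those facts are indeed standard and your use of them is sound, so the proof goes through.
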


\begin{proof}
By Theorem 5.2, Proposition 5.3 and Corollary 5.7, there is a $C^{\ast }$%
-morphism $i_{A_{\lambda }}:A_{\lambda }\rightarrow M\left( \left( G\times
_{\alpha }A\left[ \tau _{\Gamma }\right] \right) _{\lambda }\right) $ such
that $i_{A_{\lambda }}\circ \pi _{\lambda }^{A}=\overline{\pi _{\lambda
}^{G\times _{\alpha }A\left[ \tau _{\Gamma }\right] }}\circ i_{A}$. Using
the fact that $\alpha $ is an inverse limit action,\textbf{\ }it is easy to
check that $\left( i_{A_{\lambda }},\overline{\pi _{\lambda }^{G\times
_{\alpha }A\left[ \tau _{\Gamma }\right] }}\circ i_{G}\right) $ is a
covariant $C^{\ast }$-morphism from $\left( G,\alpha ^{\lambda },A_{\lambda
}\right) $ to $\left( G\times _{\alpha }A\left[ \tau _{\Gamma }\right]
\right) _{\lambda }$. Moreover, 
\begin{eqnarray*}
&&\overline{\text{span}\{i_{A_{\lambda }}\left( \pi _{\lambda }^{A}\left(
a\right) \right) \overline{\pi _{\lambda }^{G\times _{\alpha }A\left[ \tau
_{\Gamma }\right] }}\left( i_{G}\left( f\right) \right) ;a\in A,f\in
C_{c}\left( G\right) \}} \\
&=&\overline{\text{span}\{\overline{\pi _{\lambda }^{G\times _{\alpha }A%
\left[ \tau _{\Gamma }\right] }}\left( i_{A}\left( a\right) i_{G}\left(
f\right) \right) ;a\in A,f\in C_{c}\left( G\right) \}} \\
&=&\overline{\pi _{\lambda }^{G\times _{\alpha }A\left[ \tau _{\Gamma }%
\right] }}\left( G\times _{\alpha }A\left[ \tau _{\Gamma }\right] \right)
=\left( G\times _{\alpha }A\left[ \tau _{\Gamma }\right] \right) _{\lambda }.
\end{eqnarray*}%
Let $\left( \varphi ,u,\mathcal{H}\right) $ be a nondegenerate covariant
representation of $\left( G,\alpha ^{\lambda },A_{\lambda }\right) $. Then $%
\left( \varphi \circ \pi _{\lambda }^{A},u,\mathcal{H}\right) $ is a
nondegenerate covariant representation of $\left( G,\alpha ,A\left[ \tau
_{\Gamma }\right] \right) $ and by Definition\textbf{\ }5.4\textbf{, }there
is a nondegenerate representation $\left( \Phi ,\mathcal{H}\right) $ of $%
G\times _{\alpha }A\left[ \tau _{\Gamma }\right] $ such that $\overline{\Phi 
}\circ i_{A}=\varphi \circ \pi _{\lambda }^{A}$ and $\overline{\Phi }\circ
i_{G}=u.$ Moreover, by the proof of Theorem 5.9\textbf{, }%
\begin{equation*}
\left\Vert \Phi \left( b\right) \right\Vert \leq p_{\lambda ,G\times
_{\alpha }A\left[ \tau _{\Gamma }\right] }\left( b\right)
\end{equation*}%
for all $b\in G\times _{\alpha }A\left[ \tau _{\Gamma }\right] $. Therefore,
there is the $C^{\ast }$-morphism $\Phi _{\lambda }:\left( G\times _{\alpha
}A\left[ \tau _{\Gamma }\right] \right) _{\lambda }\rightarrow L(\mathcal{H}%
) $ such that $\Phi _{\lambda }\circ \overline{\pi _{\lambda }^{G\times
_{\alpha }A\left[ \tau _{\Gamma }\right] }}=\Phi $. Moreover, $\left( \Phi
_{\lambda },\mathcal{H}\right) $ is a nondegenerate representation of $%
\left( G\times _{\alpha }A\left[ \tau _{\Gamma }\right] \right) _{\lambda }\ 
$such that 
\begin{equation*}
\overline{\Phi }_{\lambda }\circ i_{A_{\lambda }}=\varphi \text{ and }%
\overline{\Phi _{\lambda }}\circ \left( \overline{\pi _{\lambda }^{G\times
_{\alpha }A\left[ \tau _{\Gamma }\right] }}\circ i_{G}\right) =u.
\end{equation*}%
Thus, we showed that $\left( G\times _{\alpha }A\left[ \tau _{\Gamma }\right]
\right) _{\lambda }$ is isomorphic to $G\times _{\alpha ^{\lambda
}}A_{\lambda }.$
\end{proof}

\begin{corollary}
Let $\left( G,\alpha ,A\left[ \tau _{\Gamma }\right] \right) $ be a pro-$%
C^{\ast }$-dynamical system such that $\alpha $ is an inverse limit action.%
\textbf{\ }Then the pro-$C^{\ast }$-algebras $G\times _{\alpha }A\left[ \tau
_{\Gamma }\right] $ and $\lim\limits_{\leftarrow \lambda }G\times _{\alpha
^{\lambda }}A_{\lambda }$ are isomorphic.
\end{corollary}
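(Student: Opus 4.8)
The plan is to combine Proposition 5.12 with the Arens--Michael decomposition. Since $\alpha$ is an inverse limit action it is strongly bounded (Remark 3.2(2)), so Theorem 5.11 guarantees that $G\times_\alpha A[\tau_\Gamma]$ exists, and by Remark 5.10 its topology is given by a family of $C^*$-seminorms indexed by the same set $\Lambda$. Writing $\rho_\lambda$ for the canonical surjection $G\times_\alpha A[\tau_\Gamma]\to(G\times_\alpha A[\tau_\Gamma])_\lambda$ and $\rho_{\lambda\mu}$ (for $\mu\le\lambda$) for the connecting $C^*$-morphisms of the associated inverse system, the Arens--Michael decomposition gives a pro-$C^*$-isomorphism $G\times_\alpha A[\tau_\Gamma]\cong\lim\limits_{\leftarrow\lambda}(G\times_\alpha A[\tau_\Gamma])_\lambda$.

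Next I would use that the canonical maps $\pi^A_{\lambda\mu}:A_\lambda\to A_\mu$ are $G$-equivariant, i.e. $\pi^A_{\lambda\mu}\circ\alpha^\lambda_t=\alpha^\mu_t\circ\pi^A_{\lambda\mu}$ for all $t\in G$; by functoriality of the full $C^*$-crossed product they induce $C^*$-morphisms $\sigma_{\lambda\mu}:G\times_{\alpha^\lambda}A_\lambda\to G\times_{\alpha^\mu}A_\mu$, and from $\pi^A_{\mu\nu}\circ\pi^A_{\lambda\mu}=\pi^A_{\lambda\nu}$ one gets $\sigma_{\mu\nu}\circ\sigma_{\lambda\mu}=\sigma_{\lambda\nu}$, so $\{G\times_{\alpha^\lambda}A_\lambda;\sigma_{\lambda\mu}\}$ is an inverse system of $C^*$-algebras whose inverse limit is exactly the pro-$C^*$-algebra $\lim\limits_{\leftarrow\lambda}G\times_{\alpha^\lambda}A_\lambda$ in the statement. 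By Proposition 5.12 there are $C^*$-isomorphisms $\theta_\lambda:(G\times_\alpha A[\tau_\Gamma])_\lambda\to G\times_{\alpha^\lambda}A_\lambda$, so it remains to verify that the family $\{\theta_\lambda\}_\lambda$ is compatible with the two systems of connecting maps, i.e. $\sigma_{\lambda\mu}\circ\theta_\lambda=\theta_\mu\circ\rho_{\lambda\mu}$ for all $\mu\le\lambda$; passing to inverse limits then yields the desired isomorphism.

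The step requiring care is precisely this compatibility, and I would establish it by a uniqueness argument on the generators. Both $\sigma_{\lambda\mu}\circ\theta_\lambda$ and $\theta_\mu\circ\rho_{\lambda\mu}$ are nondegenerate $C^*$-morphisms out of $(G\times_\alpha A[\tau_\Gamma])_\lambda$; precomposing with $\rho_\lambda$ and tracing through the proof of Proposition 5.12, one checks that both composites send $\rho_\lambda\bigl(i_A(a)i_G(f)\bigr)$ to the same element of $M(G\times_{\alpha^\mu}A_\mu)$ — namely the product of the canonical image of $\pi^A_\mu(a)$ with $\int_G f(s)u^\mu_s\,ds$ — using that $\sigma_{\lambda\mu}$ intertwines the canonical $A_\lambda$- and $G$-structures with the $A_\mu$- and $G$-structures (functoriality), and that $\theta_\mu$ was constructed precisely so as to intertwine $i_{A_\mu}$ and $\rho_\mu\circ i_G$ with the canonical embedding of $A_\mu$ and the canonical unitary representation of $G$ into $M(G\times_{\alpha^\mu}A_\mu)$. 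Since $\{i_A(a)i_G(f):a\in A,f\in C_c(G)\}$ generates $G\times_\alpha A[\tau_\Gamma]$ (Definition 5.4(2)) and $\rho_\lambda$ has dense range, continuity forces the two composites $\sigma_{\lambda\mu}\circ\theta_\lambda\circ\rho_\lambda$ and $\theta_\mu\circ\rho_{\lambda\mu}\circ\rho_\lambda$ to agree, hence $\sigma_{\lambda\mu}\circ\theta_\lambda=\theta_\mu\circ\rho_{\lambda\mu}$. Thus $\{\theta_\lambda\}_\lambda$ is an isomorphism of inverse systems, and $G\times_\alpha A[\tau_\Gamma]\cong\lim\limits_{\leftarrow\lambda}(G\times_\alpha A[\tau_\Gamma])_\lambda\cong\lim\limits_{\leftarrow\lambda}G\times_{\alpha^\lambda}A_\lambda$.
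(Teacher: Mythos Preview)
Your proof is correct and follows exactly the route the paper intends: the corollary is stated there without proof, as an immediate consequence of the preceding proposition together with the Arens--Michael decomposition, and your explicit check that the isomorphisms $\theta_\lambda$ intertwine the two families of connecting maps fills in a detail the paper leaves tacit. Note only two numbering slips: the existence result you invoke is Theorem~5.9 (not 5.11), and the componentwise isomorphism is Proposition~5.11 (not 5.12, which is the very corollary you are proving).
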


\begin{remark}
If $\left( G,\alpha ,A\left[ \tau _{\Gamma }\right] \right) $ is a pro-$%
C^{\ast }$-dynamical system such that $\alpha $ is an inverse limit action,%
\textbf{\ }then the notion of full pro-$C^{\ast }$-crossed product in the
sense of Definition 5.4\textbf{\ }coincides to the notion of full crossed
product introduced by \cite{P2, J4}.
\end{remark}

Let $A\left[ \tau _{\Gamma }\right] $ and $B\left[ \tau _{\Gamma ^{\prime }}%
\right] $ be two pro-$C^{\ast }$-algebras. For each $p_{\lambda }\in \Gamma $
and $q_{\delta }\in \Gamma ^{\prime }$, the map $t_{p_{\lambda },q_{\delta
}}:A\left[ \tau _{\Gamma }\right] \otimes _{\text{alg}}B\left[ \tau _{\Gamma
^{\prime }}\right] \rightarrow \lbrack 0,\infty )$ given by 
\begin{equation*}
t_{p_{\lambda },q_{\delta }}\left( z\right) =\sup \{\left\Vert \varphi \circ
\pi _{p_{\lambda },q_{\delta }}\left( z\right) \right\Vert ;\varphi \text{
is a}\ast \text{-representation of }A_{\lambda }\otimes _{\text{alg}}B_{j}\},
\end{equation*}%
where $\pi _{p_{\lambda },q_{\delta }}\left( a\otimes b\right) =\pi
_{\lambda }^{A}\left( a\right) \otimes \pi _{\delta }^{B}\left( b\right) $,
defines a $C^{\ast }$-seminorm on the algebraic tensor product $A\left[ \tau
_{\Gamma }\right] \otimes _{\text{alg}}B\left[ \tau _{\Gamma ^{\prime }}%
\right] $. The completion of $A\left[ \tau _{\Gamma }\right] \otimes _{\text{%
alg}}B\left[ \tau _{\Gamma ^{\prime }}\right] $ with respect to the topology
given by the family of $C^{\ast }$-seminorms $\{t_{p_{\lambda },q_{\delta
}};p_{\lambda }\in \Gamma ,q_{\delta }\in \Gamma ^{\prime }\}$ is a pro-$%
C^{\ast }$-algebra, denoted by $A\left[ \tau _{\Gamma }\right] \otimes
_{\max }B\left[ \tau _{\Gamma ^{\prime }}\right] $, and called the maximal
or projective tensor product of the pro-$C^{\ast }$-algebras $A\left[ \tau
_{\Gamma }\right] $ and $B\left[ \tau _{\Gamma ^{\prime }}\right] $ (see 
\cite[Chapter VII]{F}). Moreover, for each $p_{\lambda }\in \Gamma $ and $%
q_{\delta }\in \Gamma ^{\prime }$, the $C^{\ast }$-algebras $\left( A\left[
\tau _{\Gamma }\right] \otimes _{\max }B\left[ \tau _{\Gamma ^{\prime }}%
\right] \right) _{\left( \lambda ,\delta \right) }$ and $A_{\lambda }\otimes
_{\max }B_{\delta }$ are isomorphic.

\begin{remark}
The trivial action of $G$ on $A\left[ \tau _{\Gamma }\right] $ is an inverse
limit action, and so the full pro-$C^{\ast }$-crossed product of $A\left[
\tau _{\Gamma }\right] $ by the trivial action is isomorphic to $A\left[
\tau _{\Gamma }\right] \otimes _{\max }C^{\ast }(G)$ \cite[Corollary 1.3.9]%
{J2}.
\end{remark}

Let $\left( G,\alpha ,A\left[ \tau _{\Gamma }\right] \right) $ be a pro-$%
C^{\ast }$-dynamical system such that $\alpha $ is \textbf{s}trongly bounded%
\textbf{\ }and\textbf{\ }let $B\left[ \tau _{\Gamma ^{\prime }}\right] $ be
a pro-$C^{\ast }$-algebra. Then $t\mapsto \left( \alpha \otimes \text{id}%
\right) _{t}$, where $\left( \alpha \otimes \text{id}\right) _{t}\left(
a\otimes b\right) =\alpha _{t}\left( a\right) \otimes b$, is a strong
bounded action of $G$ on $A\left[ \tau _{\Gamma }\right] \otimes _{\max }B%
\left[ \tau _{\Gamma ^{\prime }}\right] $.

The following theorem gives an "associativity" between $\times _{\alpha }$
and $\otimes _{\max }$.

\begin{theorem}
Let $\left( G,\alpha ,A\left[ \tau _{\Gamma }\right] \right) $ be a pro-$%
C^{\ast }$-dynamical system such that $\alpha $ is strongly bounded and\ let%
\textbf{\ }$B\left[ \tau _{\Gamma ^{\prime }}\right] $ be a pro-$C^{\ast }$%
-algebra. Then the pro-$C^{\ast }$-algebras $G\times _{\alpha \otimes \text{%
id}}(A\left[ \tau _{\Gamma }\right] \otimes _{\max }B\left[ \tau _{\Gamma
^{\prime }}\right] )$ and $\left( G\times _{\alpha }A\left[ \tau _{\Gamma }%
\right] \right) \otimes _{\max }B\left[ \tau _{\Gamma ^{\prime }}\right] $
are isomorphic\textbf{.}
\end{theorem}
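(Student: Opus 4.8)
The plan is to establish the isomorphism by exhibiting covariant morphisms in both directions and appealing to the universal property of the full pro-$C^{\ast}$-crossed product together with its uniqueness (Corollary 5.7) and the characterization of morphisms out of it (Proposition 5.6). First I would set $C\left[ \tau _{\Gamma ^{\prime \prime }}\right] =\left( G\times _{\alpha }A\left[ \tau _{\Gamma }\right] \right) \otimes _{\max }B\left[ \tau _{\Gamma ^{\prime }}\right] $ and construct a covariant pro-$C^{\ast }$-morphism $\left( j,v\right) $ from $\left( G,\alpha \otimes \text{id},A\left[ \tau _{\Gamma }\right] \otimes _{\max }B\left[ \tau _{\Gamma ^{\prime }}\right] \right) $ to $C$. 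The natural candidate is $j=\left( \iota _{A}\otimes \text{id}_{B}\right) $, extended to $M(C)$ in the obvious way using that $M(\left( G\times _{\alpha }A\right) \otimes _{\max }B)$ contains $M(G\times _{\alpha }A)\otimes M(B)$, together with $v_{t}=\iota _{G}\left( t\right) \otimes 1_{M(B)}$. One checks directly that $j\left( \left( \alpha \otimes \text{id}\right) _{t}\left( a\otimes b\right) \right) =j\left( \alpha _{t}\left( a\right) \otimes b\right) =\iota _{A}\left( \alpha _{t}\left( a\right) \right) \otimes b=\iota _{G}\left( t\right) \iota _{A}\left( a\right) \iota _{G}\left( t\right) ^{\ast }\otimes b=v_{t}j\left( a\otimes b\right) v_{t}^{\ast }$, so covariance holds on elementary tensors and hence everywhere by linearity and continuity; nondegeneracy follows because $\iota _{A}$ is nondegenerate and $\text{id}_{B}$ obviously is.

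Next I would verify that $(j,v)$ satisfies the two defining conditions of Definition 5.4 so that $C$ is the full crossed product. Condition (2) is the density of $\overline{\text{span}}\{j(z)v(f):z\in A\otimes _{\max }B,f\in C_{c}(G)\}$ in $C$: on elementary tensors $j(a\otimes b)v(f)=\iota _{A}(a)\iota _{G}(f)\otimes b$, and as $a$ ranges over $A$ and $f$ over $C_{c}(G)$ the elements $\iota _{A}(a)\iota _{G}(f)$ span a dense subspace of $G\times _{\alpha }A\left[ \tau _{\Gamma }\right] $ by Definition 5.4(2) applied to the latter, so tensoring with $B$ and taking closed spans gives all of $C$. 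For condition (1), given a nondegenerate covariant representation $(\Phi ,u,\mathcal{H})$ of $(G,\alpha \otimes \text{id},A\otimes _{\max }B)$, I would restrict it to obtain commuting representations of $A$ and of $B$: set $\pi _{A}=\Phi \circ (\text{id}_{A}\otimes \iota _{B}^{\,e})$ and $\pi _{B}=\Phi \circ (\iota _{A}^{\,e}\otimes \text{id}_{B})$ after passing to multiplier algebras and using an approximate unit, where these have commuting ranges because the two factors of $A\otimes _{\max }B$ commute. Then $(\pi _{A},u)$ is a nondegenerate covariant representation of $(G,\alpha ,A)$ — covariance is inherited since $u_{t}$ implements $\alpha _{t}$ on the $A$-part and commutes with the $B$-part — so by the universal property there is a nondegenerate representation $\pi _{A}\times u$ of $G\times _{\alpha }A$, and this together with $\pi _{B}$ (which still has commuting range) yields, by the universal property of $\otimes _{\max }$, a representation of $C$ whose canonical extension pulls back $j$ to $\text{id}_{A}\otimes \iota _{B}$ composed appropriately and $v$ to $u\otimes 1$, and a short computation recovers $\Phi $ and $u$.

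The main obstacle I anticipate is the bookkeeping around multiplier algebras and the maximal tensor product: one needs that $M(C)$ accommodates $M(G\times _{\alpha }A)\otimes M(B)$ in a way compatible with the canonical extensions $\overline{(\cdot )}$ of nondegenerate morphisms, that the restriction maps $\pi _{A},\pi _{B}$ above actually land in the right multiplier algebras and are strictly continuous, and that the ranges genuinely commute at the level of $M(B\left[ \tau _{\Gamma ^{\prime }}\right] )$ rather than just $B$. These are exactly the pro-$C^{\ast }$ analogues of the standard $C^{\ast }$-algebra facts, and I would handle them levelwise via the Arens--Michael decomposition: by Remark 5.10 the indexing set for $G\times _{\alpha }A$ is $\Lambda $, so $C$ is indexed by $\Lambda \times \Delta $, each $C_{(\lambda ,\delta )}$ is isomorphic to $(G\times _{\alpha }A)_{\lambda }\otimes _{\max }B_{\delta }$ by the stated identification of $\otimes _{\max }$ with the inverse limit, and on each level the classical $C^{\ast }$ result for $(G\times _{\alpha }A)_{\lambda }\otimes _{\max }B_{\delta }$ applies; one then checks the connecting maps intertwine everything and takes the inverse limit. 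Finally, having produced covariant morphisms in both directions realizing the two universal properties, Corollary 5.7 forces the resulting pro-$C^{\ast }$-morphisms to be mutually inverse, giving the desired isomorphism.
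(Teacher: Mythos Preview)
Your proposal is correct and follows essentially the same strategy as the paper: verify that $(G\times_\alpha A)\otimes_{\max} B$ together with the covariant pair $(\iota_A\otimes\mathrm{id}_B,\ \iota_G\otimes 1)$ satisfies the two conditions of Definition~5.4, then invoke the uniqueness result Corollary~5.7. The paper handles the splitting of a given covariant representation $(\varphi,u,\mathcal H)$ into commuting $A$- and $B$-parts by first factoring $\varphi$ through a single quotient $A_\lambda\otimes_{\max}B_\delta$ and using the classical $C^\ast$-decomposition there, rather than extracting $\pi_A,\pi_B$ via approximate units as you suggest, but this is precisely the levelwise Arens--Michael reduction you already flag as your fallback.
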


\begin{proof}
Let $\rho _{G\times _{\alpha }A\left[ \tau _{\Gamma }\right] }:G\times
_{\alpha }A\left[ \tau _{\Gamma }\right] \rightarrow M(\left( G\times
_{\alpha }A\left[ \tau _{\Gamma }\right] \right) \otimes _{\max }B\left[
\tau _{\Gamma ^{\prime }}\right] )$ and $\rho _{B}:B\left[ \tau _{\Gamma
^{\prime }}\right] \rightarrow M(\left( G\times _{\alpha }A\left[ \tau
_{\Gamma }\right] \right) \otimes _{\max }B\left[ \tau _{\Gamma ^{\prime }}%
\right] )$ be the canonical maps. Then $\overline{\rho _{G\times _{\alpha }A%
\left[ \tau _{\Gamma }\right] }}\circ \iota _{A}:A\left[ \tau _{\Gamma }%
\right] \rightarrow M(\left( G\times _{\alpha }A\left[ \tau _{\Gamma }\right]
\right) \otimes _{\max }B\left[ \tau _{\Gamma ^{\prime }}\right] )$ and $%
\rho _{B}:B\left[ \tau _{\Gamma ^{\prime }}\right] \rightarrow M(\left(
G\times _{\alpha }A\left[ \tau _{\Gamma }\right] \right) \otimes _{\max }B%
\left[ \tau _{\Gamma ^{\prime }}\right] )$ are nondegenerate pro-$C^{\ast }$%
-morphisms with commuting ranges.

Let $j_{G\times _{\alpha \otimes \text{id}}A\left[ \tau _{\Gamma }\right]
\otimes _{\max }B\left[ \tau _{\Gamma ^{\prime }}\right] }=\overline{\rho
_{G\times _{\alpha }A\left[ \tau _{\Gamma }\right] }}\circ \iota _{A}\otimes
\rho _{B}$ and $j_{G}$ $=\overline{\rho _{G\times _{\alpha }A\left[ \tau
_{\Gamma }\right] }}\circ \iota _{G}$. A simple calculus shows that $\left(
j_{G\times _{\alpha \otimes \text{id}}A\left[ \tau _{\Gamma }\right] \otimes
_{\max }B\left[ \tau _{\Gamma ^{\prime }}\right] },j_{G}\right) $ is a
nondegenerate covariant pro-$C^{\ast }$-morphism from $\left( G,\alpha
\otimes \text{id},A\left[ \tau _{\Gamma }\right] \otimes _{\max }B\left[
\tau _{\Gamma ^{\prime }}\right] \right) $ to $M(\left( G\times _{\alpha }A%
\left[ \tau _{\Gamma }\right] \right) \otimes _{\max }B\left[ \tau _{\Gamma
^{\prime }}\right] )$. Moreover, from%
\begin{eqnarray*}
&&j_{G\times _{\alpha \otimes \text{id}}A\left[ \tau _{\Gamma }\right]
\otimes _{\max }B\left[ \tau _{\Gamma ^{\prime }}\right] }\left( a\otimes
b\right) j_{G}\left( f\right) \\
&=&\overline{\rho _{G\times _{\alpha }A\left[ \tau _{\Gamma }\right] }}%
\left( \iota _{A}\left( a\right) \right) \rho _{B}\left( b\right) \overline{%
\rho _{G\times _{\alpha }A\left[ \tau _{\Gamma }\right] }}\left( \iota
_{G}\left( f\right) \right) \\
&=&\overline{\rho _{G\times _{\alpha }A\left[ \tau _{\Gamma }\right] }}%
\left( \iota _{A}\left( a\right) \right) \overline{\rho _{G\times _{\alpha }A%
\left[ \tau _{\Gamma }\right] }}\left( \iota _{G}\left( f\right) \right)
\rho _{B}\left( b\right) \\
&=&\rho _{G\times _{\alpha }A\left[ \tau _{\Gamma }\right] }\left( \iota
_{A}\left( a\right) \iota _{G}\left( f\right) \right) \rho _{B}\left(
b\right)
\end{eqnarray*}%
for all $a\in A$, for all $b\in B$ and for all $f\in C_{c}(G)$, and taking
into account that $\{\iota _{A}\left( a\right) \iota _{G}\left( f\right)
;a\in A,f\in C_{c}(G)\}$ generates $G\times _{\alpha }A\left[ \tau _{\Gamma }%
\right] $ and $\{\rho _{G\times _{\alpha }A\left[ \tau _{\Gamma }\right]
}\left( z\right) \rho _{B}\left( b\right) ;z\in G\times _{\alpha }A\left[
\tau _{\Gamma }\right] ,b\in B\}$ generates $\left( G\times _{\alpha }A\left[
\tau _{\Gamma }\right] \right) \otimes _{\max }B\left[ \tau _{\Gamma
^{\prime }}\right] $, we conclude that%
\begin{eqnarray*}
&&\overline{\text{span}\{j_{G\times _{\alpha \otimes \text{id}}A\left[ \tau
_{\Gamma }\right] \otimes _{\max }B\left[ \tau _{\Gamma ^{\prime }}\right]
}\left( a\otimes b\right) j_{G}\left( f\right) ;a\in A,b\in b,f\in C_{c}(G)\}%
} \\
&=&\left( G\times _{\alpha }A\left[ \tau _{\Gamma }\right] \right) \otimes
_{\max }B\left[ \tau _{\Gamma ^{\prime }}\right] .
\end{eqnarray*}%
Let $\left( \varphi ,u,\mathcal{H}\right) $ be a nondegenerate covariant
representation of $(G,\alpha \otimes $id$,A\left[ \tau _{\Gamma }\right]
\otimes _{\max }B\left[ \tau _{\Gamma ^{\prime }}\right] )$. Then $\left(
\varphi ,\mathcal{H}\right) $ is a nondegenerate representation of $A\left[
\tau _{\Gamma }\right] \otimes _{\max }B\left[ \tau _{\Gamma ^{\prime }}%
\right] $, and so there is a nondegenerate representation $\left( \varphi
_{\left( \lambda ,\delta \right) },\mathcal{H}\right) $ of $A_{\lambda
}\otimes _{\max }B_{\delta }$ such that $\varphi _{\left( \lambda ,\delta
\right) }\circ \pi _{\left( \lambda ,\delta \right) }^{A\left[ \tau _{\Gamma
}\right] \otimes _{\max }B\left[ \tau _{\Gamma ^{\prime }}\right] }=\varphi $%
. Let $\left( \varphi _{\lambda },\mathcal{H}\right) $ and $\left( \varphi
_{\delta },\mathcal{H}\right) $ be the nondegenerate representations of $%
A_{\lambda }$, respectively $B_{\delta }$ with commuting ranges such that $%
\varphi _{\left( \lambda ,\delta \right) }\left( a\otimes b\right) =\varphi
_{\lambda }\left( a\right) \varphi _{\delta }\left( b\right) $ for all $a\in
A_{\lambda }$ and $b\in B_{\delta }$. Then $\left( \varphi _{\lambda }\circ
\pi _{\lambda }^{A},u,\mathcal{H}\right) $ is a nondegenerate covariant
representation of $\left( G,\alpha ,A\left[ \tau _{\Gamma }\right] \right) $%
, and so there is a nondegenerate representation $\left( \Phi _{1},\mathcal{H%
}\right) $ of $G\times _{\alpha }A\left[ \tau _{\Gamma }\right] $ such that $%
\overline{\Phi _{1}}\circ \iota _{A}=\varphi _{\lambda }\circ \pi _{\lambda
}^{A}$ and $\overline{\Phi _{1}}\circ \iota _{G}=u$. It is easy to check
that $\left( \Phi _{1},\mathcal{H}\right) $ and $\left( \Phi _{2},\mathcal{H}%
\right) $, where $\Phi _{2}=\varphi _{\delta }\circ \pi _{\delta }^{B}$, are
nondegenerate representations of $G\times _{\alpha }A\left[ \tau _{\Gamma }%
\right] $ respectively $B\left[ \tau _{\Gamma ^{\prime }}\right] $ with
commuting ranges. Let $\left( \Phi ,\mathcal{H}\right) $ be the
nondegenerate representation of $\left( G\times _{\alpha }A\left[ \tau
_{\Gamma }\right] \right) \otimes _{\max }B\left[ \tau _{\Gamma ^{\prime }}%
\right] $ given by $\Phi \left( z\otimes b\right) =\Phi _{1}\left( z\right)
\Phi _{2}\left( b\right) $. Then 
\begin{eqnarray*}
&&\overline{\Phi }\left( j_{G\times _{\alpha \otimes \text{id}}A\left[ \tau
_{\Gamma }\right] \otimes _{\max }B\left[ \tau _{\Gamma ^{\prime }}\right]
}\left( a\otimes b\right) \right) \\
&=&\overline{\Phi }\left( \overline{\rho _{G\times _{\alpha }A\left[ \tau
_{\Gamma }\right] }}\left( \iota _{A}\left( a\right) \right) \rho _{B}\left(
b\right) \right) =\overline{\Phi _{1}}\left( \left( \iota _{A}\left(
a\right) \right) \overline{\Phi _{2}}(\rho _{B}\left( b\right) \right) \\
&=&\left( \varphi _{\lambda }\circ \pi _{\lambda }^{A}\left( a\right)
\right) \left( \varphi _{\delta }\circ \pi _{\delta }^{B}\left( b\right)
\right) =\varphi _{\left( \lambda ,\delta \right) }\left( \pi _{\lambda
}^{A}\left( a\right) \otimes \pi _{\delta }^{B}\left( b\right) \right) \\
&=&\varphi _{\left( \lambda ,\delta \right) }\circ \pi _{\left( \lambda
,\delta \right) }^{A\left[ \tau _{\Gamma }\right] \otimes _{\max }B\left[
\tau _{\Gamma ^{\prime }}\right] }\left( a\otimes b\right) =\varphi \left(
a\otimes b\right)
\end{eqnarray*}%
for all $a\in A$ and $b\in B$, and%
\begin{equation*}
\overline{\Phi }\left( j_{G}\left( f\right) \right) =\overline{\Phi }\left( 
\overline{\rho _{G\times _{\alpha }A\left[ \tau _{\Gamma }\right] }}\circ
\iota _{G}\left( f\right) \right) =\overline{\Phi _{1}}\left( \iota
_{G}\left( f\right) \right) =u\left( f\right)
\end{equation*}%
for all $f\in C_{c}\left( G\right) $. Therefore, by Definition 5.4 and
Corollary 5.7, the pro-$C^{\ast }$-algebras $G\times _{\alpha \otimes \text{%
id}}A\left[ \tau _{\Gamma }\right] \otimes _{\max }B\left[ \tau _{\Gamma
^{\prime }}\right] $ and $\left( G\times _{\alpha }A\left[ \tau _{\Gamma }%
\right] \right) \otimes _{\max }B\left[ \tau _{\Gamma ^{\prime }}\right] $
are isomorphic\textbf{.}
\end{proof}

\begin{definition}
We say that $\left( G,\alpha ,A\left[ \tau _{\Gamma }\right] \right) $ and $%
\left( G,\beta ,B\left[ \tau _{\Gamma ^{\prime }}\right] \right) $ are
conjugate if there is a pro-$C^{\ast }$-isomorphism $\varphi :A\left[ \tau
_{\Gamma }\right] \rightarrow B\left[ \tau _{\Gamma ^{\prime }}\right] $
such that $\varphi \circ \alpha _{t}=\beta _{t}\circ \varphi $ for all $t\in
G.$
\end{definition}

\begin{remark}
If $\left( G,\alpha ,A\left[ \tau _{\Gamma }\right] \right) $ and $\left(
G,\beta ,B\left[ \tau _{\Gamma ^{\prime }}\right] \right) $ are conjugate
and $\alpha $ is strongly bounded, then $\beta $ is strongly bounded too.
\end{remark}

\begin{proposition}
Let $\left( G,\alpha ,A\left[ \tau _{\Gamma }\right] \right) $ and $\left(
G,\beta ,B\left[ \tau _{\Gamma ^{\prime }}\right] \right) $ be two pro-$%
C^{\ast }$-dynamical systems such that $\alpha $ and $\beta $ are strongly
bounded.\textbf{\ }If\textbf{\ }$\left( G,\alpha ,A\left[ \tau _{\Gamma }%
\right] \right) $ and $\left( G,\beta ,B\left[ \tau _{\Gamma ^{\prime }}%
\right] \right) $ are conjugate, then the full pro-$C^{\ast }$-crossed
products associated to these pro-$C^{\ast }$-dynamical systems are
isomorphic.
\end{proposition}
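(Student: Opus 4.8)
The plan is to run the universal-property argument of Corollary 5.7, transporting the canonical covariant morphisms across the conjugating isomorphism. Write $\varphi : A[\tau_\Gamma] \to B[\tau_{\Gamma'}]$ for the pro-$C^\ast$-isomorphism implementing the conjugacy, so $\varphi \circ \alpha_t = \beta_t \circ \varphi$ for all $t \in G$. Since $\alpha$ and $\beta$ are strongly bounded, both full crossed products exist by Theorem 5.9; let $(\iota_A,\iota_G)$ be the canonical covariant pro-$C^\ast$-morphism from $(G,\alpha,A[\tau_\Gamma])$ into $M(G\times_\alpha A[\tau_\Gamma])$ and $(j_B,j_G)$ the one from $(G,\beta,B[\tau_{\Gamma'}])$ into $M(G\times_\beta B[\tau_{\Gamma'}])$.

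First I would check that $(j_B\circ\varphi,\,j_G)$ is a nondegenerate covariant pro-$C^\ast$-morphism from $(G,\alpha,A[\tau_\Gamma])$ to $M(G\times_\beta B[\tau_{\Gamma'}])$. Indeed $j_B\circ\varphi$ is a pro-$C^\ast$-morphism as a composition of such, and using covariance of $(j_B,j_G)$ together with $\varphi\circ\alpha_t=\beta_t\circ\varphi$,
\[
(j_B\circ\varphi)(\alpha_t(a)) = j_B(\beta_t(\varphi(a))) = j_G(t)\,(j_B\circ\varphi)(a)\,j_G(t)^\ast ;
\]
nondegeneracy follows from surjectivity of $\varphi$ and nondegeneracy of $j_B$, since $\bigl[(j_B\circ\varphi)(A)\,(G\times_\beta B[\tau_{\Gamma'}])\bigr]=\bigl[j_B(B)\,(G\times_\beta B[\tau_{\Gamma'}])\bigr]=G\times_\beta B[\tau_{\Gamma'}]$. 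By Proposition 5.6 there is a unique nondegenerate pro-$C^\ast$-morphism $\Phi:=(j_B\circ\varphi)\times j_G : G\times_\alpha A[\tau_\Gamma]\to M(G\times_\beta B[\tau_{\Gamma'}])$ with $\overline{\Phi}\circ\iota_A = j_B\circ\varphi$ and $\overline{\Phi}\circ\iota_G = j_G$. On the generating elements one gets $\Phi(\iota_A(a)\iota_G(f)) = j_B(\varphi(a))\,j_G(f)$, which lies in $G\times_\beta B[\tau_{\Gamma'}]$; since $\{\iota_A(a)\iota_G(f);a\in A,f\in C_c(G)\}$ generates $G\times_\alpha A[\tau_\Gamma]$ and $\{j_B(b)j_G(f);b\in B,f\in C_c(G)\}$ generates $G\times_\beta B[\tau_{\Gamma'}]$, we conclude $\Phi\bigl(G\times_\alpha A[\tau_\Gamma]\bigr)\subseteq G\times_\beta B[\tau_{\Gamma'}]$.

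Applying the same construction to $\varphi^{-1}$, which conjugates $\beta$ to $\alpha$ and is again strongly bounded by Remark 5.17, produces a nondegenerate pro-$C^\ast$-morphism $\Psi : G\times_\beta B[\tau_{\Gamma'}]\to G\times_\alpha A[\tau_\Gamma]$ with $\overline{\Psi}\circ j_B = \iota_A\circ\varphi^{-1}$ and $\overline{\Psi}\circ j_G = \iota_G$. Then $\Psi\circ\Phi$ and $\Phi\circ\Psi$ fix the respective generating sets, e.g. $(\Psi\circ\Phi)(\iota_A(a)\iota_G(f)) = \Psi(j_B(\varphi(a))j_G(f)) = \iota_A(a)\iota_G(f)$, so by density they are the identity maps, and hence $\Phi$ is a pro-$C^\ast$-isomorphism of the two full pro-$C^\ast$-crossed products. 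The only step needing a little care is the verification that $(j_B\circ\varphi,j_G)$ really is a \emph{nondegenerate covariant} pro-$C^\ast$-morphism in the sense of Definition 5.4 — chiefly that $j_G$ remains a strict continuous group morphism and that the nondegeneracy condition holds — but both reduce immediately to the corresponding properties of $(j_B,j_G)$ together with the surjectivity of $\varphi$, so there is no serious obstacle.
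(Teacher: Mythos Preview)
Your proposal is correct and follows essentially the same route as the paper: transport the canonical covariant morphism through $\varphi$, apply Proposition~5.6 to get $\Phi$ (and symmetrically $\Psi$), check these land in the crossed products via the generating sets from Definition~5.4(2), and verify $\Phi\circ\Psi$ and $\Psi\circ\Phi$ are identities on generators. The only differences are cosmetic --- your notation $(j_B,j_G)$ versus the paper's $(\iota_B,\iota_{G,B})$, and you spell out the nondegeneracy and covariance checks that the paper merely declares ``easy to check''.
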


\begin{proof}
Let $\varphi :A\left[ \tau _{\Gamma }\right] \rightarrow B\left[ \tau
_{\Gamma ^{\prime }}\right] $ be a pro-$C^{\ast }$-isomorphism such that $%
\varphi \circ \alpha _{t}=\beta _{t}\circ \varphi $ for all $t\in G$. It is
easy to check that $\left( \iota _{B}\circ \varphi ,\iota _{G,B}\right) $ is
a nondegenerate covariant morphism from $\left( G,\alpha ,A\left[ \tau
_{\Gamma }\right] \right) $ to $G\times _{\beta }B\left[ \tau _{\Gamma
^{\prime }}\right] $, where $\left( \iota _{B},\iota _{G,B}\right) $ is the
covariant morphism from $\left( G,\beta ,B\left[ \tau _{\Gamma ^{\prime }}%
\right] \right) $ to $G\times _{\beta }B\left[ \tau _{\Gamma ^{\prime }}%
\right] $ which defines the full pro-$C^{\ast }$-crossed product of $B\left[
\tau _{\Gamma ^{\prime }}\right] $ by $\beta $. Then, by Proposition 5.6,
there is a nondegenerate pro-$C^{\ast }$-morphism $\Phi :G\times _{\alpha }A%
\left[ \tau _{\Gamma }\right] \rightarrow M\left( G\times _{\beta }B\left[
\tau _{\Gamma ^{\prime }}\right] \right) $ such that $\overline{\Phi }\circ
\iota _{A}=\iota _{B}\circ \varphi $ and $\overline{\Phi }\circ \iota
_{G,A}=\iota _{G,B}$. Moreover, using Definition 5.4, it is easy to check
that $\Phi \left( G\times _{\alpha }A\left[ \tau _{\Gamma }\right] \right)
\subseteq G\times _{\beta }B\left[ \tau _{\Gamma ^{\prime }}\right] $. In
the same manner, we obtain a nondegenerate pro-$C^{\ast }$-morphism $\Psi
:G\times _{\beta }B\left[ \tau _{\Gamma ^{\prime }}\right] \rightarrow
M\left( G\times _{\alpha }A\left[ \tau _{\Gamma }\right] \right) $ such that 
$\overline{\Psi }\circ \iota _{B}=\iota _{A}\circ \varphi ^{-1}$ and $%
\overline{\Psi }\circ \iota _{G,B}=\iota _{G,A}$.

From%
\begin{equation*}
\left( \Phi \circ \Psi \right) \left( \iota _{B}\left( b\right) \iota
_{G,B}\left( f\right) \right) =\Phi \left( \iota _{A}\circ \varphi
^{-1}\left( b\right) \iota _{G,A}\left( f\right) \right) =\iota _{B}\left(
b\right) \iota _{G,B}\left( f\right)
\end{equation*}%
and 
\begin{equation*}
\left( \Psi \circ \Phi \right) \left( \iota _{A}\left( a\right) \iota
_{G,A}\left( f\right) \right) =\Psi \left( \iota _{B}\circ \varphi \left(
a\right) \iota _{G,B}\left( f\right) \right) =\iota _{A}\left( a\right)
\iota _{G,A}\left( f\right)
\end{equation*}%
for all $b\in B\left[ \tau _{\Gamma ^{\prime }}\right] ,$ $a\in A\left[ \tau
_{\Gamma }\right] $ and $f\in C_{c}(G)$ and Definition 5.4, we deduce that $%
\Phi $ and $\Psi $ are pro-$C^{\ast }$-isomorphisms.
\end{proof}

\begin{corollary}
Let $\left( G,\alpha ,A\left[ \tau _{\Gamma }\right] \right) $ be a pro-$%
C^{\ast }$-dynamical system such that $\alpha $ is strongly bounded.

\begin{enumerate}
\item Pro-$C^{\ast }$-algebras $G\times _{\alpha }A\left[ \tau _{\Gamma }%
\right] $ and $G\times _{\alpha }A\left[ \tau _{\Gamma ^{G}}\right] $ are
isomorphic.

\item $A\left[ \tau _{\Gamma }\right] $ is isomorphic to a pro-$C^{\ast }$%
-subalgebra of $M(G\times _{\alpha }A\left[ \tau _{\Gamma }\right] )$.
\end{enumerate}
\end{corollary}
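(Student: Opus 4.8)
The plan is to treat the two assertions separately; in both, Remark 3.6 is the crucial tool.

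For (1), I would first observe that the identity map $\mathrm{id}_{A}$ is, by Remark 3.6, a pro-$C^{\ast}$-isomorphism of $A[\tau_{\Gamma}]$ onto $A[\tau_{\Gamma}^{G}]$, and that it trivially intertwines $\alpha$ with itself, so that $(G,\alpha,A[\tau_{\Gamma}])$ and $(G,\alpha,A[\tau_{\Gamma}^{G}])$ are conjugate in the sense of Definition 5.18. The action $\alpha$ is strongly bounded on $A[\tau_{\Gamma}]$ by hypothesis, and by Remark 3.6(2) it is an inverse limit action on $A[\tau_{\Gamma}^{G}]$, hence strongly bounded there by Remark 3.3(2). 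Thus Proposition 5.20 applies verbatim and produces a pro-$C^{\ast}$-isomorphism $G\times_{\alpha}A[\tau_{\Gamma}]\cong G\times_{\alpha}A[\tau_{\Gamma}^{G}]$. I expect this part to be immediate.

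For (2), I would use (1), together with the elementary fact that a pro-$C^{\ast}$-isomorphism of pro-$C^{\ast}$-algebras induces a pro-$C^{\ast}$-isomorphism of their multiplier algebras, to reduce to the following claim: when $\alpha$ is an inverse limit action with defining seminorms $\{p^{\lambda}\}_{\lambda\in\Lambda}$, the canonical map $i_{A}\colon A[\tau_{\Gamma}^{G}]\to M(G\times_{\alpha}A[\tau_{\Gamma}^{G}])$ of Theorem 5.2 (which realizes the crossed product as in Theorem 5.9) is a topological embedding onto a pro-$C^{\ast}$-subalgebra; composing with the isomorphism $A[\tau_{\Gamma}]\cong A[\tau_{\Gamma}^{G}]$ from Remark 3.6 then gives the statement. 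To prove the claim, I would fix $\lambda$ and invoke the computation from the proof of Proposition 5.12, which shows that the map induced by $i_{A}$ on the Arens--Michael factor $A/\ker p^{\lambda}$ is the canonical embedding of $A/\ker p^{\lambda}$ into the multiplier algebra of the $C^{\ast}$-crossed product $G\times_{\alpha^{\lambda}}(A/\ker p^{\lambda})$. That $C^{\ast}$-level embedding is isometric, since $(G,\alpha^{\lambda},A/\ker p^{\lambda})$ admits a faithful covariant representation (a regular one, say), which bounds the canonical map below by the $C^{\ast}$-norm, while any $\ast$-homomorphism into a $C^{\ast}$-algebra is contractive. Using the identification $(M(D[\tau]))_{\lambda}\cong M(D_{\lambda})$ from the preliminaries, this gives $p_{\lambda,M(G\times_{\alpha}A[\tau_{\Gamma}^{G}])}(i_{A}(a))=p^{\lambda}(a)$ for all $\lambda$ and $a$; hence $i_{A}$ is isometric, in particular injective, and a homeomorphism onto its range. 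That range is then complete, hence closed in $M(G\times_{\alpha}A[\tau_{\Gamma}^{G}])$, i.e.\ a pro-$C^{\ast}$-subalgebra pro-$C^{\ast}$-isomorphic to $A[\tau_{\Gamma}^{G}]$, which proves the claim.

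The step I expect to be the main obstacle is the reduction at the start of (2): for a merely strongly bounded action the canonical map $\iota_{A}$ need not be isometric for the originally given seminorms $\Gamma$, so one really has to pass to $\tau_{\Gamma}^{G}$ through part (1) before the $C^{\ast}$-algebra input---existence of a faithful covariant representation of $(G,\alpha^{\lambda},A/\ker p^{\lambda})$ and the consequent isometric embedding into the multiplier algebra of the $C^{\ast}$-crossed product---can be brought to bear on the Arens--Michael factors. After that, only routine bookkeeping with Arens--Michael decompositions remains.
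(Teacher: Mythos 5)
Your proof is correct and follows the route the paper intends: the corollary is stated without proof immediately after the conjugacy-invariance proposition, and part (1) is precisely the observation that $\left( G,\alpha ,A\left[ \tau _{\Gamma }\right] \right) $ and $\left( G,\alpha ,A\left[ \tau _{\Gamma }^{G}\right] \right) $ are conjugate via the identity map, with strong boundedness on both sides guaranteed by Remark 3.6. For part (2) you correctly supply the detail the paper leaves implicit --- that injectivity of $i_{A}$ (Theorem 5.2) together with $i_{A}(a)\in M(G\times _{\alpha }A\left[ \tau _{\Gamma }\right] )$ (proof of Theorem 5.9) is not by itself enough for ``isomorphic to a pro-$C^{\ast }$-subalgebra,'' and that one must first pass to $\tau _{\Gamma }^{G}$ so that the Arens--Michael factors become the $C^{\ast }$-crossed products $G\times _{\alpha ^{\lambda }}A_{\lambda }$, where the canonical embedding is isometric, yielding $p_{\lambda ,M\left( G\times _{\alpha }A\left[ \tau _{\Gamma }^{G}\right] \right) }\left( i_{A}\left( a\right) \right) =p^{\lambda }\left( a\right) $ and hence a homeomorphism onto a closed subalgebra.
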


\section{The reduced pro-$C^{\ast }$-crossed product}

Let $A\left[ \tau _{\Gamma }\right] $ and $B\left[ \tau _{\Gamma ^{\prime }}%
\right] $ be two pro-$C^{\ast }$-algebras. For each $p_{\lambda }\in \Gamma $
and $q_{\delta }\in \Gamma ^{\prime }$, the map $\vartheta _{p_{\lambda
},q_{\delta }}:A\left[ \tau _{\Gamma }\right] \otimes _{\text{alg}}B\left[
\tau _{\Gamma ^{\prime }}\right] \rightarrow \lbrack 0,\infty )$ given by%
\begin{equation*}
\vartheta _{p_{\lambda },q_{\delta }}\left( z\right) =\sup \{\left\Vert
\left( \varphi \otimes \psi \right) \left( z\right) \right\Vert ;\varphi \in 
\mathcal{R}_{\lambda }\left( A\left[ \tau _{\Gamma }\right] \right) \text{, }%
\psi \in \mathcal{R}_{\delta }\left( B\left[ \tau _{\Gamma ^{\prime }}\right]
\right) \}
\end{equation*}%
defines a $C^{\ast }$-seminorm on the algebraic tensor product $A\left[ \tau
_{\Gamma }\right] \otimes _{\text{alg}}B\left[ \tau _{\Gamma ^{\prime }}%
\right] $. The completion of $A\left[ \tau _{\Gamma }\right] \otimes _{\text{%
alg}}B\left[ \tau _{\Gamma ^{\prime }}\right] $ with respect to the topology
given by the family of $C^{\ast }$-seminorms $\{\vartheta _{p_{\lambda
},q_{\delta }};p_{\lambda }\in \Gamma ,q_{\delta }\in \Gamma ^{\prime }\}$
is a pro-$C^{\ast }$-algebra, denoted by $A\left[ \tau _{\Gamma }\right]
\otimes B\left[ \tau _{\Gamma ^{\prime }}\right] $, and called the minimal
or injective tensor product of the pro-$C^{\ast }$-algebras $A\left[ \tau
_{\Gamma }\right] $ and $B\left[ \tau _{\Gamma ^{\prime }}\right] $ (see 
\cite[Chapter VII]{F}). Moreover, for each $p_{\lambda }\in \Gamma $ and $%
q_{\delta }\in \Gamma ^{\prime }$, the $C^{\ast }$-algebras $\left( A\left[
\tau _{\Gamma }\right] \otimes _{\min }B\left[ \tau _{\Gamma ^{\prime }}%
\right] \right) _{\left( \lambda ,\delta \right) }$ and $A_{\lambda }\otimes
_{\min }B_{\delta }$ are isomorphic.

Let $\left( G,\alpha ,A\left[ \tau _{\Gamma }\right] \right) $ be a pro-$%
C^{\ast }$-dynamical system such that $\alpha $ is strongly bounded.\textbf{%
\ }Since $\alpha $ is strongly bounded, for each $a\in A$, the map $t\mapsto
\alpha _{t^{-1}}\left( a\right) $ defines an element in $C_{b}(G,A\left[
\tau _{\Gamma }\right] )$, the pro-$C^{\ast }$-algebra of all bounded
continuous functions from $G$ to $A\left[ \tau _{\Gamma }\right] $, and so
there is a map $\widetilde{\alpha }:A\left[ \tau _{\Gamma }\right]
\rightarrow C_{b}(G,A\left[ \tau _{\Gamma }\right] )$ given by $\widetilde{%
\alpha }\left( a\right) \left( t\right) =\alpha _{t^{-1}}\left( a\right) $.

\begin{lemma}
Let $\left( G,\alpha ,A\left[ \tau _{\Gamma }\right] \right) $ be a pro-$%
C^{\ast }$-dynamical system such that $\alpha $ is strongly bounded. Then $%
\widetilde{\alpha }$ is a nondegenerate faithful pro-$C^{\ast }$-morphism
from $A\left[ \tau _{\Gamma }\right] $ to $M(A\left[ \tau _{\Gamma }\right]
\otimes _{\min }C_{0}\left( G\right) )$ with closed range. Moreover, if $%
\alpha $ is an inverse limit action, then $\widetilde{\alpha }$ is an
inverse limit pro-$C^{\ast }$-morphism.
\end{lemma}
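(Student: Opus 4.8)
The plan is to realize $A[\tau_{\Gamma}]\otimes_{\min}C_{0}(G)$ as the pro-$C^{\ast}$-algebra $C_{0}(G,A[\tau_{\Gamma}])$ of continuous $A$-valued functions vanishing at infinity, with seminorms $p_{\lambda}^{\infty}(f)=\sup_{t\in G}p_{\lambda}(f(t))$ (here $\otimes_{\min}=\otimes_{\max}$, since $C_{0}(G)$ is commutative, hence nuclear), using that $(A[\tau_{\Gamma}]\otimes_{\min}C_{0}(G))_{\lambda}\cong A_{\lambda}\otimes_{\min}C_{0}(G)=C_{0}(G,A_{\lambda})$. Under this identification there is a canonical pro-$C^{\ast}$-embedding $C_{b}(G,A[\tau_{\Gamma}])\hookrightarrow M(A[\tau_{\Gamma}]\otimes_{\min}C_{0}(G))$ by pointwise multiplication, which on the $\lambda$-th $C^{\ast}$-quotient is the standard inclusion $C_{b}(G,A_{\lambda})\subseteq M(C_{0}(G,A_{\lambda}))$; in particular the $\lambda$-th $C^{\ast}$-seminorm $p_{\lambda,M}$ of $M(A[\tau_{\Gamma}]\otimes_{\min}C_{0}(G))$ restricts to $p_{\lambda,M}(g)=\sup_{t}p_{\lambda}(g(t))$ on $C_{b}(G,A[\tau_{\Gamma}])$. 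First I would invoke strong boundedness: for each $\lambda$ there is $\mu$ with $p_{\lambda}(\alpha_{t^{-1}}(a))\leq p_{\mu}(a)$ for all $t\in G$, so $t\mapsto\alpha_{t^{-1}}(a)$ is a bounded and (by the action axiom) continuous $A$-valued function; hence $\widetilde{\alpha}(a)\in C_{b}(G,A[\tau_{\Gamma}])$ and $\widetilde{\alpha}$ does map $A[\tau_{\Gamma}]$ into $M(A[\tau_{\Gamma}]\otimes_{\min}C_{0}(G))$.

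Next I would verify the algebraic and metric properties. That $\widetilde{\alpha}$ is a $\ast$-morphism reduces to the pointwise identities $\widetilde{\alpha}(ab)(t)=\alpha_{t^{-1}}(a)\alpha_{t^{-1}}(b)$ and $\widetilde{\alpha}(a^{\ast})(t)=\alpha_{t^{-1}}(a)^{\ast}$, since each $\alpha_{t^{-1}}$ is a $\ast$-automorphism and the operations on $C_{b}(G,A[\tau_{\Gamma}])$ are pointwise. For the seminorms, $p_{\lambda,M}(\widetilde{\alpha}(a))=\sup_{t}p_{\lambda}(\alpha_{t^{-1}}(a))=p^{\lambda}(a)$ with $p^{\lambda}$ as in Remark 3.6, and since $p^{\lambda}\leq p_{\mu}$ for a suitable $\mu$, $\widetilde{\alpha}$ is a pro-$C^{\ast}$-morphism. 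It is faithful because $\widetilde{\alpha}(a)=0$ forces $\alpha_{e}(a)=a=0$. For the closed range, observe that the equality $p_{\lambda,M}(\widetilde{\alpha}(a))=p^{\lambda}(a)$ says precisely that $\widetilde{\alpha}\colon A[\tau_{\Gamma}^{G}]\to M(A[\tau_{\Gamma}]\otimes_{\min}C_{0}(G))$ is isometric; since $A[\tau_{\Gamma}^{G}]=A[\tau_{\Gamma}]$ (Remark 3.6) is complete and the target is Hausdorff, $\widetilde{\alpha}$ is a homeomorphism onto its range, which is therefore complete, hence closed.

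For nondegeneracy I would show that $\widetilde{\alpha}(e_{j})f\to f$ in $C_{0}(G,A[\tau_{\Gamma}])$ for a bounded approximate unit $\{e_{j}\}$ of $A[\tau_{\Gamma}]$ with $p_{\nu}(e_{j})\leq 1$ for all $\nu$, and every $f\in C_{0}(G,A[\tau_{\Gamma}])$. Writing $f(t)=\alpha_{t^{-1}}(\alpha_{t}(f(t)))$ and using $(\widetilde{\alpha}(e_{j})f)(t)=\alpha_{t^{-1}}(e_{j})f(t)$, strong boundedness gives $p_{\lambda}(\alpha_{t^{-1}}(e_{j})f(t)-f(t))\leq p_{\mu}(e_{j}\,\alpha_{t}(f(t))-\alpha_{t}(f(t)))$. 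The map $t\mapsto\alpha_{t}(f(t))$ is continuous, because the action is jointly continuous on $G\times A[\tau_{\Gamma}]$ (equicontinuity of $\{\alpha_{t}\}_{t}$ from strong boundedness, together with continuity of the orbit maps), and it lies in $C_{0}(G,A[\tau_{\Gamma}])$ (again by strong boundedness); hence $\{\alpha_{t}(f(t)):t\in G\}\cup\{0\}$ has compact closure $K$ in $A[\tau_{\Gamma}]$, and on $K$ the net $\{e_{j}\}$ converges uniformly to the identity. Therefore $\sup_{t}p_{\lambda}(\alpha_{t^{-1}}(e_{j})f(t)-f(t))\to 0$, which yields $[\widetilde{\alpha}(A)(A[\tau_{\Gamma}]\otimes_{\min}C_{0}(G))]=A[\tau_{\Gamma}]\otimes_{\min}C_{0}(G)$, so $\widetilde{\alpha}$ is nondegenerate. (Alternatively one may observe that $\widetilde{\alpha}$ factors as the standard nondegenerate embedding $a\mapsto a\otimes 1$ followed by the extension to multipliers of the pro-$C^{\ast}$-automorphism $\Theta$ of $C_{0}(G,A[\tau_{\Gamma}])$ given by $\Theta(f)(t)=\alpha_{t}(f(t))$.)

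Finally, if $\alpha$ is an inverse limit action then $p^{\lambda}=p_{\lambda}$ for all $\lambda$, so $\widetilde{\alpha}$ is isometric already for $\Gamma$ itself; in particular $\widetilde{\alpha}(\ker p_{\lambda})$ lies in the kernel of the $\lambda$-th $C^{\ast}$-seminorm of $M(A[\tau_{\Gamma}]\otimes_{\min}C_{0}(G))$, so $\widetilde{\alpha}$ descends to $C^{\ast}$-morphisms $\widetilde{\alpha}_{\lambda}\colon A_{\lambda}\to M(A[\tau_{\Gamma}]\otimes_{\min}C_{0}(G))_{\lambda}\cong M(A_{\lambda}\otimes_{\min}C_{0}(G))$ satisfying $\widetilde{\alpha}_{\lambda}\circ\pi_{\lambda}^{A}=\overline{\pi_{\lambda}^{A[\tau_{\Gamma}]\otimes_{\min}C_{0}(G)}}\circ\widetilde{\alpha}$; checking $\widetilde{\alpha}_{\lambda}=\widetilde{\alpha^{\lambda}}$ for the $C^{\ast}$-dynamical system $(G,\alpha^{\lambda},A_{\lambda})$ then shows $\widetilde{\alpha}=\lim\limits_{\leftarrow\lambda}\widetilde{\alpha^{\lambda}}$ is an inverse limit pro-$C^{\ast}$-morphism. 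I expect the main obstacle to be the nondegeneracy step, and specifically upgrading the pointwise convergence of the approximate unit to uniform convergence: this is exactly what forces one to exhibit the relatively compact set $\{\alpha_{t}(f(t)):t\in G\}$ and to use that a bounded approximate unit of a pro-$C^{\ast}$-algebra converges uniformly on compacta.
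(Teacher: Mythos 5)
Your proposal is correct and follows the same skeleton as the paper's proof: strong boundedness puts $\widetilde{\alpha }(a)$ into $C_{b}(G,A\left[ \tau _{\Gamma }\right] )$, which is then viewed inside $M(A\left[ \tau _{\Gamma }\right] \otimes _{\min }C_{0}(G))$; the sandwich $p_{\lambda }\leq p_{\lambda ,M}\circ \widetilde{\alpha }=p^{\lambda }\leq p_{\mu }$ gives injectivity and closed range exactly as in the paper; and the inverse-limit statement is handled identically. The one place you genuinely diverge is nondegeneracy: the paper tests strict convergence of $\{\widetilde{\alpha }(e_{i})\}$ only against elementary tensors $a\otimes f$ with $f\in C_{c}(G)$ and obtains the required uniformity in $t$ by applying Arzel\`{a}--Ascoli to the uniformly bounded, equicontinuous net $f_{i}(t)=p_{\mu }(e_{i}\alpha _{t}(a)-\alpha _{t}(a))$, whereas you work with a general $f\in C_{0}(G,A\left[ \tau _{\Gamma }\right] )$, observe that $\{\alpha _{t}(f(t));t\in G\}\cup \{0\}$ is precompact (joint continuity of the action plus vanishing at infinity, both consequences of strong boundedness), and use that a bounded approximate unit converges to the identity uniformly on compact subsets of $A$. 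Both arguments are correct and resolve the same uniformity issue; yours is slightly more general (no reduction to elementary tensors needed) and replaces Arzel\`{a}--Ascoli by the standard total-boundedness covering argument, at the cost of having to justify precompactness of the range of $t\mapsto \alpha _{t}(f(t))$ in a non-normed algebra, which you do correctly.
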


\begin{proof}
Clearly, $\widetilde{\alpha }$ is a $\ast $-morphism. For each $p_{\lambda
}\in \Gamma $, there is $p_{\mu }\in \Gamma $ such that%
\begin{equation*}
p_{\lambda }\left( a\right) =p_{\lambda }\left( \alpha _{e}\left( a\right)
\right) \leq \sup \{p_{\lambda }\left( \alpha _{t}\left( a\right) \right)
;t\in G\}=p_{\lambda ,C_{b}(G,A\left[ \tau _{\Gamma }\right] )}\left( 
\widetilde{\alpha }\left( a\right) \right) \leq p_{\mu }\left( a\right)
\end{equation*}%
for all $a\in A$. Therefore, $\widetilde{\alpha }$ is an injective pro-$%
C^{\ast }$-morphism with closed range. By \cite[p. 76]{J2}, $C_{b}(G,A\left[
\tau _{\Gamma }\right] )$ can be identified to a pro-$C^{\ast }$-subalgebra
of $M(A\left[ \tau _{\Gamma }\right] \otimes C_{0}\left( G\right) )$, and
then $\widetilde{\alpha }$ can be regarded as a pro-$C^{\ast }$-morphism
from $A\left[ \tau _{\Gamma }\right] $ to $M(A\left[ \tau _{\Gamma }\right]
\otimes C_{0}\left( G\right) )$.

To show that $\widetilde{\alpha }$ is nondegenerate, let $\{e_{i}\}_{i\in I}$
be an approximate unit for $A\left[ \tau _{\Gamma }\right] $. In the same
manner as in \cite[Proposition 5.1.5]{V}, we show that $\{\widetilde{\alpha }%
\left( e_{i}\right) \}_{i\in I}$ is strictly convergent. Indeed, let $a\in
A, $ $f\in C_{c}\left( G\right) $ and $p_{\lambda }\in \Gamma $. Then%
\begin{eqnarray*}
&&p_{\lambda ,C_{b}(G,A\left[ \tau _{\Gamma }\right] )}\left( \widetilde{%
\alpha }\left( e_{i}\right) \left( a\otimes f\right) -a\otimes f\right) \\
&=&\sup \{p_{\lambda }\left( \alpha _{t^{-1}}\left( e_{i}\right) af\left(
t\right) -af\left( t\right) \right) ;t\in G\} \\
&\leq &\left\Vert f\right\Vert _{\infty }\sup \{p_{\lambda }\left( \alpha
_{t^{-1}}\left( e_{i}\alpha _{t}\left( a\right) -\alpha _{t}\left( a\right)
\right) \right) ;t\in \text{supp}\left( f\right) \} \\
&\leq &\left\Vert f\right\Vert _{\infty }\sup \{p_{\mu }\left( e_{i}\alpha
_{t}\left( a\right) -\alpha _{t}\left( a\right) \right) ;t\in \text{supp}%
\left( f\right) \}
\end{eqnarray*}%
for some $p_{\mu }\in \Gamma $. For each $i\in I$, consider the function $%
f_{i}:G\rightarrow \mathbb{C},$ $f_{i}(t)=$ $p_{\mu }\left( e_{i}\alpha
_{t}\left( a\right) -\alpha _{t}\left( a\right) \right) $. Clearly, $%
\{f_{i}\}_{i\in I}$ is a net of continuous functions on $G$ which is
uniformly bounded and equicontinuous. Then, by Arzel\`{a}--Ascoli's theorem,
it is uniformly convergent on compact subsets of $G$. Therefore, $\{%
\widetilde{\alpha }\left( e_{i}\right) \}_{i\in I}$ is strictly convergent,
and so the pro-$C^{\ast }$-morphism $\widetilde{\alpha }$ is nondegenerate.

Suppose that $\alpha _{t}=\lim\limits_{\leftarrow \lambda }\alpha
_{t}^{\lambda }$ for each $t\in G$. Then $\left( \widetilde{\alpha ^{\lambda
}}\right) _{\lambda }$ is an inverse system of $C^{\ast }$-morphisms and $%
\widetilde{\alpha }=\lim\limits_{\leftarrow \lambda }\widetilde{\alpha
^{\lambda }}$.
\end{proof}

Let $\varphi :A\left[ \tau _{\Gamma }\right] \rightarrow M(B\left[ \tau
_{\Gamma ^{\prime }}\right] )$ be a nondegenerate pro-$C^{\ast }$-morphism
and let $M:$ $C_{0}(G)\rightarrow L(L^{2}(G))$ be the representation by
multiplication operators. Then there is a nondegenerate pro-$C^{\ast }$%
-morphism $\varphi \otimes M:A\left[ \tau _{\Gamma }\right] \otimes _{\min
}C_{0}(G)\rightarrow M(B\left[ \tau _{\Gamma ^{\prime }}\right] \otimes
_{\min }\mathcal{K}(L^{2}(G))$ such that $\left( \varphi \otimes M\right)
\left( a\otimes f\right) =\varphi \left( a\right) \otimes M_{f}$, and since $%
\widetilde{\alpha }$ is a nondegenerate pro-$C^{\ast }$-morphism from $A%
\left[ \tau _{\Gamma }\right] $ to $M(A\left[ \tau _{\Gamma }\right] \otimes
_{\min }C_{0}\left( G\right) )$, $\widetilde{\varphi }=$ $\overline{\varphi
\otimes M}\circ \widetilde{\alpha }$ is a nondegenerate pro-$C^{\ast }$%
-morphism from $A\left[ \tau _{\Gamma }\right] $ to $M(B\left[ \tau _{\Gamma
^{\prime }}\right] \otimes _{\min }\mathcal{K}(L^{2}(G))$.

Let $\lambda _{G}:G\rightarrow \mathcal{U}(L^{2}(G))$ be the left
representation of $G$ on $L^{2}(G)$ given by $\left( \lambda _{G}\right)
_{t}\left( \xi \right) \left( s\right) =\xi \left( t^{-1}s\right) $. Then $%
1\otimes \lambda _{G}:G\rightarrow \mathcal{U}\left( M(B\left[ \tau _{\Gamma
^{\prime }}\right] \otimes \mathcal{K}(L^{2}(G))\right) $, where $\left(
1\otimes \lambda _{G}\right) _{t}\left( b\otimes \xi \right) \left( s\right)
=b\xi \left( t^{-1}s\right) $, is a strict continuous group morphism from $G$
to $\mathcal{U}\left( M(B\left[ \tau _{\Gamma ^{\prime }}\right] \otimes
_{\min }\mathcal{K}(L^{2}(G))\right) $, and $\left( \widetilde{\varphi }%
,1\otimes \lambda _{G}\right) $ is a nondegenerate covariant morphism of $%
\left( G,\alpha ,A\left[ \tau _{\Gamma }\right] \right) $ to $B\left[ \tau
_{\Gamma ^{\prime }}\right] \otimes _{\min }\mathcal{K}(L^{2}(G)$. By
Proposition 5.6, there is a unique nondegenerate pro-$C^{\ast }$-morphism $%
\widetilde{\varphi }\times \left( 1\otimes \lambda _{G}\right) :G\times
_{\alpha }A\left[ \tau _{\Gamma }\right] \rightarrow M(B\left[ \tau _{\Gamma
^{\prime }}\right] \otimes _{\min }\mathcal{K}(L^{2}(G))$ such that $%
\widetilde{\varphi }\times \left( 1\otimes \lambda _{G}\right) \circ \iota
_{A}=\widetilde{\varphi }$ and $\widetilde{\varphi }\times \left( 1\otimes
\lambda _{G}\right) \circ \iota _{G}=1\otimes \lambda _{G}$.

If $\varphi =$id$_{A}$, the nondegenerate pro-$C^{\ast }$-morphism $%
\widetilde{\text{id}}_{A}\times \left( 1\otimes \lambda _{G}\right) :$ $%
G\times _{\alpha }A\left[ \tau _{\Gamma }\right] \rightarrow M(A\left[ \tau
_{\Gamma }\right] \otimes _{\min }\mathcal{K}(L^{2}(G))$ is denoted by $%
\Lambda _{A}^{G}$. It is easy to check that $\widetilde{\varphi }\times
\left( 1\otimes \lambda _{G}\right) =\overline{\varphi \otimes \text{id}_{%
\mathcal{K}(L^{2}(G))}}\circ \Lambda _{A}^{G}$.

If $\alpha $ is an inverse limit action, $\alpha
_{t}=\lim\limits_{\leftarrow \lambda }\alpha _{t}^{\lambda }$ for each $t\in
G$, then it is easy to check that $\Lambda _{A}^{G}$ is an inverse limit pro-%
$C^{\ast }$-morphism, $\Lambda _{A}^{G}=\lim\limits_{\leftarrow \lambda
}\Lambda _{A_{\lambda }}^{G}.$

\begin{definition}
The reduced pro-$C^{\ast }$-crossed product of $A\left[ \tau _{\Gamma }%
\right] $ by $\alpha $ is the pro-$C^{\ast }$-subalgebra $G\times _{\alpha
,r}A\left[ \tau _{\Gamma }\right] $ of $M(A\left[ \tau _{\Gamma }\right]
\otimes _{\min }\mathcal{K}(L^{2}(G)))$ generated by the range of $\Lambda
_{A}^{G}$.
\end{definition}

\begin{remark}
From%
\begin{equation*}
\Lambda _{A}^{G}\left( \iota _{A}\left( a\right) \iota _{G}\left( f\right)
\right) =\left( \overline{\text{id}_{A}\otimes M}\circ \widetilde{\alpha }%
\right) \left( a\right) \left( 1\otimes \lambda _{G}\right) \left( f\right) =%
\widetilde{\alpha }\left( a\right) \left( 1\otimes \lambda _{G}\left(
f\right) \right)
\end{equation*}%
for all $a\in A$ and for all $f\in C_{c}\left( G\right) $, and taking into
account that $G\times _{\alpha }A\left[ \tau _{\Gamma }\right] $ is
generated by $\{\iota _{A}\left( a\right) \iota _{G}\left( f\right) ;a\in A,$
$f\in C_{c}\left( G\right) \}$, we conclude that $G\times _{\alpha ,r}A\left[
\tau _{\Gamma }\right] $ is the pro-$C^{\ast }$-subalgebra of $M(A\left[
\tau _{\Gamma }\right] \otimes _{\min }\mathcal{K}(L^{2}(G)))$ generated by $%
\{\widetilde{\alpha }\left( a\right) \left( 1\otimes \lambda _{G}\left(
f\right) \right) ;a\in A,$ $f\in C_{c}\left( G\right) \}.$
\end{remark}

\begin{remark}
If $\alpha $ is an inverse limit action, $\alpha
_{t}=\lim\limits_{\leftarrow \lambda }\alpha _{t}^{\lambda }$ for each $t\in
G$, then%
\begin{equation*}
G\times _{\alpha ,r}A\left[ \tau _{\Gamma }\right] =\overline{\Lambda
_{A}^{G}\left( G\times _{\alpha }A\left[ \tau _{\Gamma }\right] \right) }%
=\lim\limits_{\leftarrow \lambda }\overline{\Lambda _{A_{\lambda
}}^{G}\left( G\times _{\alpha ^{\lambda }}A_{\lambda }\right) }%
=\lim\limits_{\leftarrow \lambda }G\times _{\alpha ^{\lambda },r}A_{\lambda }
\end{equation*}%
and moreover, for each $p_{\lambda }\in \Gamma $, the $C^{\ast }$-algebras $%
\left( G\times _{\alpha ,r}A\left[ \tau _{\Gamma }\right] \right) _{\lambda }
$ and $G\times _{\alpha ^{\lambda },r}A_{\lambda }$ are isomorphic.
\end{remark}

\begin{remark}
Since the trivial action of a locally compact group $G$ on a pro-$C^{\ast }$%
-algebra $A\left[ \tau _{\Gamma }\right] $ is an inverse limit action, the
reduced pro-$C^{\ast }$-crossed product of $A\left[ \tau _{\Gamma }\right] $
by the trivial action is the inverse limit of the reduced crossed products
of $A_{\lambda }$ by the trivial action, and so it is isomorphic to the pro-$%
C^{\ast }$-algebra $A\left[ \tau _{\Gamma }\right] \otimes _{\min
}C_{r}^{\ast }\left( G\right) $.
\end{remark}

Let $\left( G,\alpha ,A\left[ \tau _{\Gamma }\right] \right) $ be a pro-$%
C^{\ast }$-dynamical system such that $\alpha $ is strongly bounded and\ let%
\textbf{\ }$B\left[ \tau _{\Gamma ^{\prime }}\right] $ be a pro-$C^{\ast }$%
-algebra. Then $t\mapsto \left( \alpha \otimes \text{id}\right) _{t}$, where 
$\left( \alpha \otimes \text{id}\right) _{t}\left( a\otimes b\right) =\alpha
_{t}\left( a\right) \otimes b$, is a strong bounded\textbf{\ }action of $G$
on $A\left[ \tau _{\Gamma }\right] \otimes B\left[ \tau _{\Gamma ^{\prime }}%
\right] $.

The following theorem gives an "associativity" between $\times _{\alpha ,r}$
and $\otimes _{\min }$.

\begin{theorem}
Let $\left( G,\alpha ,A\left[ \tau _{\Gamma }\right] \right) $ be a pro-$%
C^{\ast }$-dynamical system such that $\alpha $ is strongly bounded and\ let 
$B\left[ \tau _{\Gamma ^{\prime }}\right] $ be a pro-$C^{\ast }$-algebra.
Then the pro-$C^{\ast }$-algebras $G\times _{\alpha \otimes \text{id},r}(A%
\left[ \tau _{\Gamma }\right] \otimes _{\min }B\left[ \tau _{\Gamma ^{\prime
}}\right] )$ and $\left( G\times _{\alpha ,r}A\left[ \tau _{\Gamma }\right]
\right) \otimes _{\min }B\left[ \tau _{\Gamma ^{\prime }}\right] $ are
isomorphic\textbf{.}
\end{theorem}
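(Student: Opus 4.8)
The plan is to realise both pro-$C^{\ast}$-algebras in the statement as pro-$C^{\ast}$-subalgebras of the multiplier algebra of one triple minimal tensor product, and then to match them by the canonical flip.

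Write $C=G\times_{\alpha,r}A[\tau_{\Gamma}]$. By Lemma 6.1 and Remark 6.3, $C$ is the pro-$C^{\ast}$-subalgebra of $M(A[\tau_{\Gamma}]\otimes_{\min}\mathcal{K}(L^{2}(G)))$ generated by the elements $T(a,f):=(\overline{\mathrm{id}_{A}\otimes M}\circ\widetilde{\alpha})(a)\,(1\otimes\lambda_{G}(f))$ with $a\in A$, $f\in C_{c}(G)$. Since $\alpha$ is strongly bounded, so is $\alpha\otimes\mathrm{id}$ on $A[\tau_{\Gamma}]\otimes_{\min}B[\tau_{\Gamma'}]$ (as noted before the statement), and applying Lemma 6.1 and Remark 6.3 to this system, $G\times_{\alpha\otimes\mathrm{id},r}(A[\tau_{\Gamma}]\otimes_{\min}B[\tau_{\Gamma'}])$ is the pro-$C^{\ast}$-subalgebra of $M((A[\tau_{\Gamma}]\otimes_{\min}B[\tau_{\Gamma'}])\otimes_{\min}\mathcal{K}(L^{2}(G)))$ generated by $T'(a\otimes b,f):=(\overline{\mathrm{id}_{A\otimes_{\min}B}\otimes M}\circ\widetilde{\alpha\otimes\mathrm{id}})(a\otimes b)\,(1\otimes\lambda_{G}(f))$, where $\widetilde{\alpha\otimes\mathrm{id}}$ is the pro-$C^{\ast}$-morphism of Lemma 6.1 for $(G,\alpha\otimes\mathrm{id},A[\tau_{\Gamma}]\otimes_{\min}B[\tau_{\Gamma'}])$, so that $\widetilde{\alpha\otimes\mathrm{id}}(a\otimes b)$ is the function $t\mapsto\alpha_{t^{-1}}(a)\otimes b$. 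Finally, because $C$ is generated by $\{T(a,f)\}$, the pro-$C^{\ast}$-algebra $C\otimes_{\min}B[\tau_{\Gamma'}]$, viewed inside $M((A[\tau_{\Gamma}]\otimes_{\min}\mathcal{K}(L^{2}(G)))\otimes_{\min}B[\tau_{\Gamma'}])$, is generated by $\{T(a,f)\otimes b:a\in A,\ b\in B,\ f\in C_{c}(G)\}$.

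Next I would use the canonical flip
\[
\Theta:(A[\tau_{\Gamma}]\otimes_{\min}B[\tau_{\Gamma'}])\otimes_{\min}\mathcal{K}(L^{2}(G))\longrightarrow(A[\tau_{\Gamma}]\otimes_{\min}\mathcal{K}(L^{2}(G)))\otimes_{\min}B[\tau_{\Gamma'}],\quad a\otimes b\otimes k\mapsto a\otimes k\otimes b.
\]
By the identification $(A[\tau_{\Gamma}]\otimes_{\min}B[\tau_{\Gamma'}])_{(\lambda,\delta)}\cong A_{\lambda}\otimes_{\min}B_{\delta}$ recalled in the text, $\Theta$ is the ordinary $C^{\ast}$-algebraic flip at each level and respects the Arens--Michael decompositions, so it is a pro-$C^{\ast}$-isomorphism and extends to an isomorphism $\overline{\Theta}$ of the multiplier algebras. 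The crux is the identity
\[
\overline{\Theta}(T'(a\otimes b,f))=T(a,f)\otimes b\qquad(a\in A,\ b\in B,\ f\in C_{c}(G)).
\]
To establish it I would proceed in three steps. First, with $\Theta_{0}:(A[\tau_{\Gamma}]\otimes_{\min}B[\tau_{\Gamma'}])\otimes_{\min}C_{0}(G)\to(A[\tau_{\Gamma}]\otimes_{\min}C_{0}(G))\otimes_{\min}B[\tau_{\Gamma'}]$ the analogous flip, I would check on elementary tensors (and extend to multipliers) that $\overline{\Theta}\circ\overline{\mathrm{id}_{A\otimes_{\min}B}\otimes M}=\overline{(\mathrm{id}_{A}\otimes M)\otimes\mathrm{id}_{B}}\circ\overline{\Theta_{0}}$. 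Second, using the realisation of $\widetilde{\alpha}(a)$, resp. $\widetilde{\alpha\otimes\mathrm{id}}(a\otimes b)$, as the bounded continuous functions $t\mapsto\alpha_{t^{-1}}(a)$, resp. $t\mapsto\alpha_{t^{-1}}(a)\otimes b$, inside the respective multiplier algebras (as in the proof of Lemma 6.1), I would observe that $\overline{\Theta_{0}}(\widetilde{\alpha\otimes\mathrm{id}}(a\otimes b))=\widetilde{\alpha}(a)\otimes b$; combined with the first step this gives $\overline{\Theta}((\overline{\mathrm{id}_{A\otimes_{\min}B}\otimes M}\circ\widetilde{\alpha\otimes\mathrm{id}})(a\otimes b))=(\overline{\mathrm{id}_{A}\otimes M}\circ\widetilde{\alpha})(a)\otimes b$. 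Third, one notes that $\overline{\Theta}(1\otimes\lambda_{G}(f))=(1\otimes\lambda_{G}(f))\otimes 1$; multiplying the last two relations yields the identity.

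Granting the identity, $\overline{\Theta}$ carries the generating set $\{T'(a\otimes b,f)\}$ of $G\times_{\alpha\otimes\mathrm{id},r}(A[\tau_{\Gamma}]\otimes_{\min}B[\tau_{\Gamma'}])$ bijectively onto the generating set $\{T(a,f)\otimes b\}$ of $(G\times_{\alpha,r}A[\tau_{\Gamma}])\otimes_{\min}B[\tau_{\Gamma'}]$; hence the pro-$C^{\ast}$-isomorphism $\overline{\Theta}$ restricts to a pro-$C^{\ast}$-isomorphism between these two pro-$C^{\ast}$-subalgebras, which is exactly the assertion. I expect the one genuine obstacle to be the bookkeeping behind $\overline{\Theta}(T'(a\otimes b,f))=T(a,f)\otimes b$, i.e.\ showing that the flip intertwines $\widetilde{\alpha\otimes\mathrm{id}}$ with $\widetilde{\alpha}\otimes\mathrm{id}_{B}$ at the level of multiplier algebras; once the relevant squares are verified on the dense $\ast$-subalgebras and extended to multipliers, everything else is formal. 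An alternative, trading the flip for an invariance lemma, is to pass to the family $\Gamma^{G}$ (Remark 3.6), where $\alpha$ and $\alpha\otimes\mathrm{id}$ become inverse limit actions, and then combine Remark 6.4 with the $C^{\ast}$-algebraic associativity of $\times_{\alpha,r}$ and $\otimes_{\min}$ applied at each level.
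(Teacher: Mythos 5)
Your proposal is correct and follows essentially the same route as the paper: your flip $\Theta$ is exactly the paper's $\mathrm{id}_{A}\otimes \sigma _{B,\mathcal{K}(L^{2}(G))}$, and the key identity $\overline{\Theta }(T^{\prime }(a\otimes b,f))=T(a,f)\otimes b$ is precisely the relation the paper verifies before invoking Remark 6.3 to match the generating sets. Your three-step verification of that identity just spells out in more detail what the paper asserts in one line.
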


\begin{proof}
The map id$_{A}\otimes \sigma _{B,\mathcal{K}(L^{2}(G))}:$ $A\left[ \tau
_{\Gamma }\right] \otimes _{\min }B\left[ \tau _{\Gamma ^{\prime }}\right]
\otimes _{\min }\mathcal{K}(L^{2}(G))\rightarrow A\left[ \tau _{\Gamma }%
\right] \otimes _{\min }\mathcal{K}(L^{2}(G))\otimes _{\min }B\left[ \tau
_{\Gamma ^{\prime }}\right] $ given by%
\begin{equation*}
\text{id}_{A}\otimes \sigma _{B,\mathcal{K}(L^{2}(G))}\left( a\otimes
b\otimes T\right) =a\otimes T\otimes b
\end{equation*}%
is a pro-$C^{\ast }$-isomorphism. Moreover, id$_{A}\otimes \sigma _{B,%
\mathcal{K}(L^{2}(G))}$ is an inverse limit of $C^{\ast }$-isomorphisms. From%
\begin{equation*}
\overline{\text{id}_{A}\otimes \sigma _{B,\mathcal{K}(L^{2}(G))}}\left( 
\widetilde{\alpha \otimes \text{id}}\left( a\otimes b\right) \left( 1\otimes
\lambda _{G}(f\right) \right) =\widetilde{\alpha }\left( a\right) \left(
1_{M(A\left[ \tau _{\Gamma }\right] )}\otimes \lambda _{G}(f\right) \otimes
b)
\end{equation*}%
for all $a\in A$, for all $b\in B$ and for all $f\in C_{c}(G)$, and Remark
6.3, we deduce that%
\begin{equation*}
\overline{\text{id}_{A}\otimes \sigma _{B,\mathcal{K}(L^{2}(G))}}|_{G\times
_{\alpha \otimes \text{id},r}A\left[ \tau _{\Gamma }\right] \otimes _{\min }B%
\left[ \tau _{\Gamma ^{\prime }}\right] }
\end{equation*}%
is a pro-$C^{\ast }$-isomorphism from $G\times _{\alpha \otimes \text{id}%
,r}(A\left[ \tau _{\Gamma }\right] \otimes _{\min }B\left[ \tau _{\Gamma
^{\prime }}\right] )$ onto \ $\left( G\times _{\alpha ,r}A\left[ \tau
_{\Gamma }\right] \right) \otimes _{\min }B\left[ \tau _{\Gamma ^{\prime }}%
\right] $. Therefore, the pro-$C^{\ast }$-algebras $\left( G\times _{\alpha
,r}A\left[ \tau _{\Gamma }\right] \right) \otimes _{\min }B\left[ \tau
_{\Gamma ^{\prime }}\right] $ and $G\times _{\alpha \otimes \text{id},r}(A%
\left[ \tau _{\Gamma }\right] \otimes _{\min }B\left[ \tau _{\Gamma ^{\prime
}}\right] )$ are isomorphic\textbf{.}
\end{proof}

\begin{proposition}
Let $\left( G,\alpha ,A\left[ \tau _{\Gamma }\right] \right) $ and $\left(
G,\beta ,B\left[ \tau _{\Gamma ^{\prime }}\right] \right) $ be two pro-$%
C^{\ast }$-dynamical systems such that $\alpha $ and $\beta $ are strongly
bounded.\textbf{\ }If $\left( G,\alpha ,A\left[ \tau _{\Gamma }\right]
\right) $ and $\left( G,\beta ,B\left[ \tau _{\Gamma ^{\prime }}\right]
\right) $ are conjugate, then the reduced pro-$C^{\ast }$-crossed products
associated to these pro-$C^{\ast }$-dynamical systems are isomorphic.
\end{proposition}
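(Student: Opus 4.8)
The plan is to carry out, for the concrete model of the reduced crossed product, the same transport-of-structure argument that was used for the full crossed product in Proposition 5.18. Fix a pro-$C^{\ast }$-isomorphism $\varphi :A\left[ \tau _{\Gamma }\right] \rightarrow B\left[ \tau _{\Gamma ^{\prime }}\right] $ with $\varphi \circ \alpha _{t}=\beta _{t}\circ \varphi $ for all $t\in G$; by hypothesis both $\alpha $ and $\beta $ are strongly bounded, so the construction of Section 6 (in particular the morphisms $\widetilde{\alpha }$, $\widetilde{\beta }$ of Lemma 6.1 and the descriptions of the reduced crossed products in Definition 6.2 and Remark 6.3) applies to both systems. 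Since $\otimes _{\min }$ is functorial for pro-$C^{\ast }$-morphisms, $\Theta :=\varphi \otimes \text{id}_{\mathcal{K}(L^{2}(G))}$ is a pro-$C^{\ast }$-isomorphism from $A\left[ \tau _{\Gamma }\right] \otimes _{\min }\mathcal{K}(L^{2}(G))$ onto $B\left[ \tau _{\Gamma ^{\prime }}\right] \otimes _{\min }\mathcal{K}(L^{2}(G))$, with inverse $\varphi ^{-1}\otimes \text{id}_{\mathcal{K}(L^{2}(G))}$; being surjective it is in particular nondegenerate as a morphism into the multiplier algebra, so it extends to a pro-$C^{\ast }$-isomorphism $\overline{\Theta }$ between the two multiplier algebras. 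The goal is to show that $\overline{\Theta }$ restricts to a pro-$C^{\ast }$-isomorphism of $G\times _{\alpha ,r}A\left[ \tau _{\Gamma }\right] $ onto $G\times _{\beta ,r}B\left[ \tau _{\Gamma ^{\prime }}\right] $.

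The heart of the argument is the identity $\overline{\Theta }\circ \widetilde{\alpha }=\widetilde{\beta }\circ \varphi $, where $\widetilde{\alpha }$, $\widetilde{\beta }$ are regarded as morphisms into $M(A\left[ \tau _{\Gamma }\right] \otimes _{\min }\mathcal{K}(L^{2}(G)))$ and $M(B\left[ \tau _{\Gamma ^{\prime }}\right] \otimes _{\min }\mathcal{K}(L^{2}(G)))$ through $\overline{\text{id}_{A}\otimes M}$, resp. $\overline{\text{id}_{B}\otimes M}$. First I would establish the analogous identity one stage earlier, at the level of $C_{0}(G)$: namely $\overline{\varphi \otimes \text{id}_{C_{0}(G)}}\circ \widetilde{\alpha }=\widetilde{\beta }\circ \varphi $ as maps $A\left[ \tau _{\Gamma }\right] \rightarrow M(B\left[ \tau _{\Gamma ^{\prime }}\right] \otimes _{\min }C_{0}(G))$. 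Under the natural identification $C_{b}(G,A\left[ \tau _{\Gamma }\right] )\subseteq M(A\left[ \tau _{\Gamma }\right] \otimes _{\min }C_{0}(G))$ of \cite[p.~76]{J2}, the extension $\overline{\varphi \otimes \text{id}_{C_{0}(G)}}$ acts on $C_{b}(G,\cdot )$ by post-composition with $\varphi $, so it sends $\widetilde{\alpha }\left( a\right) $, which is the function $t\mapsto \alpha _{t^{-1}}\left( a\right) $, to the function $t\mapsto \varphi \left( \alpha _{t^{-1}}\left( a\right) \right) =\beta _{t^{-1}}\left( \varphi \left( a\right) \right) $, that is, to $\widetilde{\beta }\left( \varphi \left( a\right) \right) $; this single step is where the conjugacy relation enters. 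Composing with $M:C_{0}(G)\rightarrow L(L^{2}(G))$ then gives the claimed identity over $\mathcal{K}(L^{2}(G))$, because $\overline{\varphi \otimes \text{id}_{\mathcal{K}(L^{2}(G))}}\circ \overline{\text{id}_{A}\otimes M}$ and $\overline{\text{id}_{B}\otimes M}\circ \overline{\varphi \otimes \text{id}_{C_{0}(G)}}$ are both extensions of the same nondegenerate morphism $a\otimes f\mapsto \varphi \left( a\right) \otimes M_{f}$ and hence coincide by uniqueness of the extension to multiplier algebras. Finally, since $\Theta $ is the identity on the leg $\mathcal{K}(L^{2}(G))$, one has $\overline{\Theta }\left( 1\otimes \lambda _{G}\left( f\right) \right) =1\otimes \lambda _{G}\left( f\right) $ for every $f\in C_{c}(G)$.

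Putting the two together, $\overline{\Theta }\left( \widetilde{\alpha }\left( a\right) \left( 1\otimes \lambda _{G}\left( f\right) \right) \right) =\widetilde{\beta }\left( \varphi \left( a\right) \right) \left( 1\otimes \lambda _{G}\left( f\right) \right) $ for all $a\in A$ and $f\in C_{c}(G)$. By Remark 6.3 the elements on the left generate $G\times _{\alpha ,r}A\left[ \tau _{\Gamma }\right] $ as a pro-$C^{\ast }$-algebra, and since $\varphi $ is surjective the elements on the right are exactly the generators $\widetilde{\beta }\left( b\right) \left( 1\otimes \lambda _{G}\left( f\right) \right) $, $b\in B$, $f\in C_{c}(G)$, of $G\times _{\beta ,r}B\left[ \tau _{\Gamma ^{\prime }}\right] $. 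As $\overline{\Theta }$ is a pro-$C^{\ast }$-isomorphism of the ambient multiplier algebras, it carries the pro-$C^{\ast }$-subalgebra generated by the first family onto the pro-$C^{\ast }$-subalgebra generated by the second; hence $\overline{\Theta }$ restricts to a pro-$C^{\ast }$-isomorphism $G\times _{\alpha ,r}A\left[ \tau _{\Gamma }\right] \cong G\times _{\beta ,r}B\left[ \tau _{\Gamma ^{\prime }}\right] $, which is the assertion.

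The formal skeleton --- functoriality of $\otimes _{\min }$, the fact that a surjective pro-$C^{\ast }$-morphism extends to an isomorphism of multiplier algebras, and the fact that such an isomorphism maps a pro-$C^{\ast }$-subalgebra given by generators onto the one given by the image generators --- is routine and mirrors what was done for the full crossed product. The only point that really uses the hypotheses is the identity $\overline{\varphi \otimes \text{id}_{C_{0}(G)}}\circ \widetilde{\alpha }=\widetilde{\beta }\circ \varphi $, and I expect the main technical obstacle to be a careful verification that the identification $C_{b}(G,\cdot )\subseteq M(\cdot \otimes _{\min }C_{0}(G))$ is natural in the pro-$C^{\ast }$-algebra, so that $\overline{\varphi \otimes \text{id}_{C_{0}(G)}}$ indeed implements post-composition with $\varphi $ on the $C_{b}(G,\cdot )$-pieces; once this is in place the remainder is bookkeeping.
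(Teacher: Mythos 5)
Your proposal is correct and follows essentially the same route as the paper: both hinge on the identity $\overline{\varphi \otimes \mathrm{id}_{\mathcal{K}(L^{2}(G))}}\circ \widetilde{\alpha }=\widetilde{\beta }\circ \varphi $ applied to the generators $\widetilde{\alpha }\left( a\right) \left( 1\otimes \lambda _{G}\left( f\right) \right) $ of Remark 6.3. The only differences are cosmetic: you spell out the verification of that identity at the level of $C_{0}(G)$ (which the paper dismisses as "easy to check"), and you conclude by noting that an isomorphism of the ambient multiplier algebras carries one generated subalgebra onto the other, whereas the paper explicitly writes down the inverse restriction $\overline{\varphi ^{-1}\otimes \mathrm{id}_{\mathcal{K}(L^{2}(G))}}$ and checks the two compositions on generators.
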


\begin{proof}
Let $\varphi :A\left[ \tau _{\Gamma }\right] \rightarrow B\left[ \tau
_{\Gamma ^{\prime }}\right] $ be a pro-$C^{\ast }$-isomorphism such that $%
\varphi \circ \alpha _{t}=\beta _{t}\circ \varphi $ for all $t\in G$. It is
easy to check that $\overline{\varphi \otimes \text{id}_{\mathcal{K}%
(L^{2}(G))}}\circ \widetilde{\alpha }=\widetilde{\beta }\circ \varphi $. From%
\begin{equation*}
\overline{\varphi \otimes \text{id}_{\mathcal{K}(L^{2}(G))}}\left( 
\widetilde{\alpha }\left( a\right) \left( 1\otimes \lambda _{G}\left(
f\right) \right) \right) =\widetilde{\beta }\left( \varphi \left( a\right)
\right) \left( 1\otimes \lambda _{G}\left( f\right) \right)
\end{equation*}%
for all $a\in A$ and for all $f\in C_{c}\left( G\right) $, and taking into
account that 
\begin{equation*}
\overline{\text{span}\{\widetilde{\alpha }\left( a\right) \left( 1\otimes
\lambda _{G}\left( f\right) \right) ;a\in A,f\in C_{c}\left( G\right) \}}%
=G\times _{\alpha ,r}A\left[ \tau _{\Gamma }\right]
\end{equation*}%
and 
\begin{equation*}
\overline{\text{span}\{\widetilde{\beta }\left( a\right) \left( 1\otimes
\lambda _{G}\left( f\right) \right) ;a\in B,f\in C_{c}\left( G\right) \}}%
=G\times _{\beta ,r}B\left[ \tau _{\Gamma ^{\prime }}\right] ,
\end{equation*}%
we conclude that $\Phi _{1}=\overline{\varphi \otimes \text{id}_{\mathcal{K}%
(L^{2}(G))}}|_{G\times _{\alpha ,r}A\left[ \tau _{\Gamma }\right] }$ is a
pro-$C^{\ast }$-morphism from $G\times _{\alpha ,r}A\left[ \tau _{\Gamma }%
\right] $ to $G\times _{\beta ,r}B\left[ \tau _{\Gamma ^{\prime }}\right] $.

In the same manner, we conclude that $\Phi _{2}=\overline{\varphi
^{-1}\otimes \text{id}_{\mathcal{K}(L^{2}(G))}}|_{G\times _{\beta ,r}B\left[
\tau _{\Gamma ^{\prime }}\right] }$ is a pro-$C^{\ast }$-morphism from $%
G\times _{\beta ,r}B\left[ \tau _{\Gamma ^{\prime }}\right] $ to $G\times
_{\alpha ,r}A\left[ \tau _{\Gamma }\right] $. Moreover, $\Phi _{1}\circ \Phi
_{2}=$id$_{G\times _{\beta ,r}B\left[ \tau _{\Gamma ^{\prime }}\right] }$
and $\Phi _{2}\circ \Phi _{1}=$id$_{G\times _{\alpha ,r}A\left[ \tau
_{\Gamma }\right] }$, since%
\begin{equation*}
\Phi _{1}\circ \Phi _{2}\left( \widetilde{\beta }\left( b\right) \left(
1\otimes \lambda _{G}\left( f\right) \right) \right) =\widetilde{\beta }%
\left( b\right) \left( 1\otimes \lambda _{G}\left( f\right) \right)
\end{equation*}%
for all $b\in B$ and for all $f\in C_{c}\left( G\right) $ and 
\begin{equation*}
\Phi _{2}\circ \Phi _{1}\left( \widetilde{\alpha }\left( a\right) \left(
1\otimes \lambda _{G}\left( f\right) \right) \right) =\widetilde{\alpha }%
\left( a\right) \left( 1\otimes \lambda _{G}\left( f\right) \right)
\end{equation*}%
for all $a\in A$ and for all $f\in C_{c}\left( G\right) $. Therefore, the
pro-$C^{\ast }$-algebras $G\times _{\alpha ,r}A\left[ \tau _{\Gamma }\right] 
$ and $G\times _{\beta ,r}B\left[ \tau _{\Gamma ^{\prime }}\right] $ are
isomorphic.
\end{proof}

\begin{corollary}
Let $\left( G,\alpha ,A\left[ \tau _{\Gamma }\right] \right) $ be a pro-$%
C^{\ast }$-dynamical systems such that $\alpha $ is strongly bounded. Then
the pro-$C^{\ast }$-algebras $G\times _{\alpha ,r}A\left[ \tau _{\Gamma }%
\right] $ and $G\times _{\alpha ,r}A\left[ \tau _{\Gamma ^{G}}\right] $ are
isomorphic.
\end{corollary}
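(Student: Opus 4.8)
The plan is to reduce the statement to Proposition 6.7, the invariance of the reduced pro-$C^{\ast }$-crossed product under conjugacy, using the fact that $A[\tau _{\Gamma }]$ and $A[\tau _{\Gamma ^{G}}]$ have the same underlying $\ast $-algebra and the same topology.

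First I would invoke Remark 3.6: since for each $\lambda \in \Lambda $ there is $\mu \in \Lambda $ with $p_{\lambda }\leq p^{\lambda }\leq p_{\mu }$, the families $\Gamma $ and $\Gamma ^{G}$ induce the same topology on $A$, so the identity map $\mathrm{id}_{A}\colon A[\tau _{\Gamma }]\rightarrow A[\tau _{\Gamma ^{G}}]$ is a pro-$C^{\ast }$-isomorphism; moreover $\alpha $ is an inverse limit action with respect to $\Gamma ^{G}$, hence strongly bounded by Remark 3.2. Since $\mathrm{id}_{A}\circ \alpha _{t}=\alpha _{t}=\alpha _{t}\circ \mathrm{id}_{A}$ for all $t\in G$, the pro-$C^{\ast }$-dynamical systems $(G,\alpha ,A[\tau _{\Gamma }])$ and $(G,\alpha ,A[\tau _{\Gamma ^{G}}])$ are conjugate in the sense of Definition 5.16, and both actions are strongly bounded (the first by hypothesis, the second as just noted).

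Applying Proposition 6.7 to this pair of conjugate systems then gives that $G\times _{\alpha ,r}A[\tau _{\Gamma }]$ and $G\times _{\alpha ,r}A[\tau _{\Gamma ^{G}}]$ are isomorphic, which is the assertion. There is essentially no obstacle here: the conjugating pro-$C^{\ast }$-isomorphism is literally the identity of $A$, and Remark 3.6 supplies all the hypotheses of Proposition 6.7. In fact, since $\widetilde{\alpha }$ and the maps $\lambda _{G}$, $\Lambda _{A}^{G}$ depend only on the pro-$C^{\ast }$-algebra $A$, the action $\alpha $, and the group $G$, and not on the chosen defining family of $C^{\ast }$-seminorms, the two reduced crossed products coincide as pro-$C^{\ast }$-subalgebras of $M(A\otimes _{\min }\mathcal{K}(L^{2}(G)))$ and the isomorphism furnished by Proposition 6.7 is the identity. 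This is the reduced counterpart of Corollary 5.19(1), and is proved in exactly the same way from Proposition 5.18.
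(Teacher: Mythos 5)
Your proof is correct and follows exactly the route the paper intends: the corollary is stated immediately after Proposition 6.7 precisely because the identity map realizes a conjugacy between $(G,\alpha ,A\left[ \tau _{\Gamma }\right] )$ and $(G,\alpha ,A\left[ \tau _{\Gamma ^{G}}\right] )$, with Remark 3.6 supplying that $\tau _{\Gamma }=\tau _{\Gamma ^{G}}$ and that $\alpha $ is an inverse limit (hence strongly bounded) action with respect to $\Gamma ^{G}$. Your closing observation that the isomorphism is in fact the identity on the common subalgebra of $M(A\left[ \tau _{\Gamma }\right] \otimes _{\min }\mathcal{K}(L^{2}(G)))$ is a correct refinement, matching the treatment of Corollary 5.19(1).
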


\begin{remark}
If $\alpha $ is an action of an amenable locally compact group $G$ on a $%
C^{\ast }$-algebra $A$, then the $C^{\ast }$-morphism $\Lambda _{A}^{G}$ is
injective and the full crossed product $A$ by $\alpha $ is isomorphic to the
reduced crossed product of $A$ by $\alpha $. If $\alpha $ is an inverse
limit action of an amenable locally compact group $G$ on a pro-$C^{\ast }$%
-algebra $A\left[ \tau _{\Gamma }\right] $, $\alpha
_{t}=\lim\limits_{\leftarrow \lambda }\alpha _{t}^{\lambda }$ for each $t\in
G$, then $\Lambda _{A}^{G}$ $=\lim\limits_{\leftarrow \lambda }\Lambda
_{A_{\lambda }}^{G}$, and so $\Lambda _{A}^{G}$ is an injective pro-$C^{\ast
}$-morphism with closed range. Therefore, if $G$ is amenable and $\alpha $
is an inverse limit action, then the full pro-$C^{\ast }$-crossed product of 
$A\left[ \tau _{\Gamma }\right] $ by $\alpha $ is isomorphic to the reduced
pro-$C^{\ast }$-crossed product of $A\left[ \tau _{\Gamma }\right] $ by $%
\alpha $.
\end{remark}

\begin{proposition}
Let $\left( G,\alpha ,A\left[ \tau _{\Gamma }\right] \right) $ be a pro-$%
C^{\ast }$-dynamical system such that $\alpha $ is strongly bounded. If $G$
is amenable then the full pro-$C^{\ast }$-crossed product of $A\left[ \tau
_{\Gamma }\right] $ by $\alpha $ is isomorphic to the reduced pro-$C^{\ast }$%
-crossed product of $A\left[ \tau _{\Gamma }\right] $ by $\alpha .$
\end{proposition}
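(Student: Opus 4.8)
The plan is to reduce the statement to the inverse limit action case, which has already been settled in Remark 6.9. Since $\alpha$ is strongly bounded, Remark 3.6 furnishes a family $\Gamma^G=\{p^\lambda\}_{\lambda\in\Lambda}$ of $C^*$-seminorms, $p^\lambda(a)=\sup\{p_\lambda(\alpha_t(a));t\in G\}$, which defines on $A$ a pro-$C^*$-algebra $A[\tau_{\Gamma^G}]$ isomorphic to $A[\tau_\Gamma]$ and with respect to which $\alpha$ is an inverse limit action; by Remark 3.2 we may then write $\alpha_t=\lim\limits_{\leftarrow\lambda}\alpha_t^\lambda$ for each $t\in G$, with each $\alpha^\lambda$ an action of $G$ on $A_\lambda$. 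In particular $\alpha$ is strongly bounded with respect to $\Gamma^G$ as well, so both crossed products $G\times_\alpha A[\tau_{\Gamma^G}]$ and $G\times_{\alpha,r}A[\tau_{\Gamma^G}]$ are defined.

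First I would apply Remark 6.9 to the pro-$C^*$-dynamical system $(G,\alpha,A[\tau_{\Gamma^G}])$: this is legitimate because $G$ is amenable and $\alpha$ is an inverse limit action with respect to $\Gamma^G$, and it gives that the full pro-$C^*$-crossed product $G\times_\alpha A[\tau_{\Gamma^G}]$ is isomorphic to the reduced pro-$C^*$-crossed product $G\times_{\alpha,r}A[\tau_{\Gamma^G}]$. Then I would transfer this conclusion back to the original presentation: by Corollary 5.19(1) the pro-$C^*$-algebras $G\times_\alpha A[\tau_\Gamma]$ and $G\times_\alpha A[\tau_{\Gamma^G}]$ are isomorphic, while by Corollary 6.8 the pro-$C^*$-algebras $G\times_{\alpha,r}A[\tau_\Gamma]$ and $G\times_{\alpha,r}A[\tau_{\Gamma^G}]$ are isomorphic. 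Composing these three isomorphisms shows that the full pro-$C^*$-crossed product of $A[\tau_\Gamma]$ by $\alpha$ is isomorphic to the reduced pro-$C^*$-crossed product of $A[\tau_\Gamma]$ by $\alpha$.

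Since all the substantive work is already contained in the quoted results, there is no genuine obstacle here beyond bookkeeping. The only point demanding care is to make sure Remark 6.9 is invoked in the presentation $A[\tau_{\Gamma^G}]$, where $\alpha$ really is an inverse limit action, and that the move back to the original presentation is made precisely through Corollary 5.19(1) and Corollary 6.8, which were established exactly for this kind of comparison.
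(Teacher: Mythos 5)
Your proposal is correct and follows essentially the same route as the paper, whose proof simply cites the isomorphisms $G\times _{\alpha }A\left[ \tau _{\Gamma }\right] \cong G\times _{\alpha }A\left[ \tau _{\Gamma ^{G}}\right] $ and $G\times _{\alpha ,r}A\left[ \tau _{\Gamma }\right] \cong G\times _{\alpha ,r}A\left[ \tau _{\Gamma ^{G}}\right] $ together with Remark 6.9 applied to the inverse limit presentation. Your write-up just makes explicit the reduction to $\Gamma ^{G}$ that the paper leaves implicit.
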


\begin{proof}
It follows from Corollaries 5.20 and 6.8, and Remark 6.9.
\end{proof}

\end{document}